\documentclass{article}
\usepackage{arxiv}
\usepackage[T1]{fontenc}
\usepackage[utf8]{inputenc}
\usepackage[authoryear,round]{natbib}

\usepackage{amsmath,amsfonts,bm}

\def\eqref#1{equation~\ref{#1}}

\def\1{\bm{1}}

\DeclareMathAlphabet{\mathsfit}{\encodingdefault}{\sfdefault}{m}{sl}
\SetMathAlphabet{\mathsfit}{bold}{\encodingdefault}{\sfdefault}{bx}{n}

\newcommand{\E}{\mathbb{E}}

\DeclareMathOperator*{\argmin}{arg\,min}

\renewcommand{\eqref}[1]{(\ref{#1})}

\usepackage{hyperref}
\hypersetup{
  hidelinks,
  pdftitle={Beyond Aggregate Drift: Optimal Client-Specific Windows in Federated Learning},
  pdfauthor={Yuriy Dorn, Dmitrii Vishovan, Kirill Novoselov, Ilya Kuruzov, Darina Dvinskikh, Alexander Gasnikov}
}
\usepackage{url}
\usepackage{amsmath,amssymb,mathtools}
\usepackage{amsthm}
\usepackage{graphicx}
\usepackage{booktabs}
\usepackage{subcaption}
\usepackage{enumitem}
\usepackage{xcolor}
\usepackage{array}
\usepackage{algorithm}
\usepackage{algorithmic}
\usepackage{microtype}


\theoremstyle{plain}
\newtheorem{theorem}{Theorem}[section]
\newtheorem{lemma}[theorem]{Lemma}
\newtheorem{corollary}[theorem]{Corollary}
\newtheorem{proposition}[theorem]{Proposition}
\theoremstyle{definition}
\newtheorem{definition}[theorem]{Definition}

\theoremstyle{remark}

\providecommand{\cX}{\mathcal{X}}
\newcommand{\Prob}{\mathbb{P}}
\newcommand{\inner}[2]{\langle #1, #2\rangle}
\newcommand{\norm}[1]{\left\lVert #1\right\rVert}
\newcommand{\proj}{\Pi_{\cX}}
\newcommand{\kl}{\mathrm{KL}}
\newcommand{\tv}{\mathrm{TV}}
\newcommand{\avgerr}{\mathrm{AvgErr}}
\newcommand{\xstar}{x^{\star}}
\newcommand{\xinit}{x_{\circ}}
\newcommand{\ii}{\mathrm{i}}
\DeclareMathOperator{\sinc}{sinc}

\newcommand{\Hg}{\mathcal{H}_{\gamma}}
\newcommand{\Sg}{\mathcal{S}_{\gamma}}
\newcommand{\Cg}{\mathcal{C}_{\gamma}}
\newcommand{\transient}{\mathcal{T}}
\newcommand{\batch}{\mathcal{B}}
\newcommand{\Rint}{\mathcal{I}}
\newcommand{\bias}{\delta}
\newcommand{\Filt}{\mathcal{F}}
\newcommand{\Risk}{\mathcal{R}}
\newcommand{\Pclass}{\mathfrak{P}}
\newcommand{\Aclass}{\mathfrak{A}}
\newcommand{\sched}{\mathsf{N}}
\newcommand{\alg}{\mathsf{A}}
\newcommand{\inst}{\mathsf{P}}
\newcommand{\bsign}{\varsigma}
\newcommand{\grid}{\mathcal{G}}
\newcommand{\vscore}{\widehat{\ell}}
\newcommand{\etaew}{\eta_{\mathrm{ew}}}
\newcommand{\Qrisk}{\mathcal{Q}}

\title{Beyond Aggregate Drift:\\ Optimal Client-Specific Windows in Federated Learning}

\author{\normalfont\parbox{0.96\textwidth}{\centering
  Yuriy Dorn\textsuperscript{3},
  Dmitrii Vishovan\textsuperscript{1},
  Kirill Novoselov\textsuperscript{1},\\
  Ilya Kuruzov\textsuperscript{1,2},
  Darina Dvinskikh\textsuperscript{4}, and
  Alexander Gasnikov\textsuperscript{1,5,6}\\[0.8em]
  \small
  \textsuperscript{1}Innopolis University\\
  \textsuperscript{2}Moscow Independent Research Institute of Artificial Intelligence\\
  \textsuperscript{3}Lomonosov Moscow State University\\
  \textsuperscript{4}Higher School of Economics\\
  \textsuperscript{5}Trusted AI Research Center, RAS\\
  \textsuperscript{6}Steklov Mathematical Institute, RAS
}}
\date{}
\renewcommand{\shorttitle}{Beyond Aggregate Drift}
\renewcommand{\headeright}{}
\renewcommand{\undertitle}{}

\begin{document}

\maketitle

\begin{abstract}
We study federated learning from continuously arriving data: each client samples from its own distribution, these distributions drift over time, and the server must track the minimizer of the clients' weighted average loss. Clients are not alike as they collect data at different rates, with different noise, and change at different speeds. The main question is how much history should each one keep? A single
shared window must serve every client at once, and so makes a fast-changing client carry the sampling noise of a slow, nearly stationary one.

We give each client a memory of its own, sized to that client's noise, arrival rate, and drift, and refresh it by restarted accelerated method that starts from a common point. The tracking error is then bounded by
\[
  R\left(\sum_m w_m^{3/2}\,\sigma_m\sqrt{\gamma_m/\lambda_m}\right)^{2/3},
\]
for clients with weight $w_m$, noise $\sigma_m$, arrival rate $\lambda_m$, and
drift rate $\gamma_m$ on a domain of diameter $R$. This is never worse than what any common window can achieve, and strictly better whenever noise and drift are spread unevenly across clients.

A lower bound that puts different groups of clients on different drift clocks shows this quantity is the right one. The two bounds agree up to a constant whenever the clients occupy boundedly many refresh scales, and in general differ only by the sixth root of the number of scales. Experiments show the gain comes from the memory rather than from the solver: it persists across accelerated, stochastic-gradient, and one-step methods, and against a separately tuned common-memory baseline.
\end{abstract}

\section{Introduction}
\label{sec:intro}

Many learning systems receive data continuously from several clients
whose statistics change over time: sensors in a monitoring network,
institutions or devices contributing to a federated model, users of a
recommendation service. Because the target is the \emph{current}
population objective, an observation loses value as the distribution that
produced it moves away, and the learner must decide how much history to
keep. For a single stream the classical answer is a sliding window whose
length balances the bias caused by drift against the variance of
estimating from few samples \citep{gama2014survey,mazzetto2023adaptive}. But clients rarely change at the same speed:
one sensor may supply a few noisy observations of a population that
hardly moves, while another supplies many observations of a population
that changes quickly. One common memory for all clients then faces a
dilemma. A short memory throws away the stable client's history, which
is still informative, whereas a long memory keeps the changing client's
history, which is already stale. The statistical question behind this
dilemma is how long each client's past observations remain useful, and
what a common memory costs.

\paragraph{The problem.}
There are $M$ clients with public weights $w_m$. Client $m$ supplies
independent time-stamped samples at rate $\lambda_m$ from a distribution
$\mathcal D_m^t$ that moves in time. The server tracks the weighted
population objective $F^t=\sum_m w_m f_m^t$, where $f_m^t$ is the
expected loss under $\mathcal D_m^t$ and $F^t$ is convex on a compact
domain of diameter $D$, by reporting a causal trajectory $(x_t)_{t\ge0}$:
the point reported at time $t$ may depend only on information the server
has received by then, and each synchronization with the clients takes
$\tau$ time units. The learner is charged the time-averaged
suboptimality of $x_t$ with respect to the current objective $F^t$, a
continuous-time counterpart of dynamic regret. Each client is described
by three public numbers: its rate $\lambda_m$, a variance bound
$\sigma_m^2$ for its stochastic gradients, and a bound $\gamma_m$ on the
speed of its population gradient. The learner's decision is the
\emph{data memory}: which of each client's past samples enter the
stochastic problem that the server solves when it refreshes its report.

\paragraph{What is known, and what is not.}
With one common memory of length $h$, the single-stream bias--variance
balance carries over: the aggregate drift $\bar\gamma=\sum_m w_m\gamma_m$
is balanced against the pooled uncertainty
$\Xi^2=\sum_m w_m^2\sigma_m^2/\lambda_m$, and the best common length
yields the upper-bound scale $D\bar\gamma^{1/3}\Xi^{2/3}$
(Appendix~\ref{app:benchmarks}). This scale is
loose whenever the noisiest clients are the ones that barely change,
since it charges their uncertainty against the drift of the others. Whether the looseness is real cannot be decided by a lower
bound that lets an adversary reallocate a total drift budget among the
clients, because here each client's drift bound is prescribed separately.
Two further obstacles stand between the natural remedy, a separate
memory per client, and a guarantee: retained samples are reused across
refreshes, so the iterates depend on the data they are evaluated on, and
the drift bounds may be unknown in practice.

\paragraph{Contributions.}
We characterize what client-specific memories gain over a common one, and
we show how to select among them without knowing the drift bounds.
\begin{enumerate}[leftmargin=1.35em,itemsep=2pt,topsep=3pt]
\item \textbf{A heterogeneous tracking quantity.}
Separate completed data windows, a reset stochastic solve after each
refresh, and disjoint batches within each solve attain
\[
 D\Hg
 =D\Bigl(\sum_m w_m^{3/2}\sigma_m\sqrt{\gamma_m/\lambda_m}\Bigr)^{2/3}
 \le D\bar\gamma^{1/3}\Xi^{2/3}
\]
with explicit optimization, sample-count, and elapsed-time terms
(Theorem~\ref{thm:refresh-upper}). The argument reuses stored samples
across solves under pointwise gradient-moment assumptions only, and it
turns the uncertainty of a stationary client into a vanishing
finite-horizon transient; projected SGD attains the same quantity.
\item \textbf{Lower bounds that respect the client vector.}
Independent client groups with unequal clocks give a group-sensitive
lower bound (Corollary~\ref{cor:refresh-dyadic}) whose uniform
simplification is a constant multiple of $D\Hg/J^{1/6}$ for $J$ dyadic
memory scales, and an explicit joint regime controls every term of the
upper bound. Separately, an exact periodic calculation shows that the
risk ratio between common and client-specific uniform completed-window
estimators is unbounded (Proposition~\ref{prop:periodic-main}) and yields
a test for when forgetting helps.
\item \textbf{Selection without drift inputs.}
Under a bounded-loss assumption, independent validation and causal expert
aggregation compete with the best fixed joint memory vector
(Theorem~\ref{thm:adaptive-summary}), retaining the pooled-variance
comparison and paying explicit validation and source-age costs; the
aggregation is standard, and the point is its validity under causal
reuse.
\item \textbf{Evidence and boundaries.}
Prospective tests on periodic streams confirm predicted benefits and
failures of a specified memory rule, and controlled experiments separate
the memory mechanism from the solver. On temperature laws inside the
regime of Corollary~\ref{cor:refresh-joint}, the windows of
Corollary~\ref{cor:refresh-opt} cut a December gap by $51\%$ and $52\%$
against the bound-optimal and tuned common lengths; the gain tracks the
predicted heterogeneity and shrinks when the clocks agree. A clinic
network with unequal drift shows the same window shape cutting the gap
under exact buffered minimization, with a smaller reduction once
Algorithm~\ref{alg:refresh} leaves an optimization remainder. These
comparisons establish no universal advantage for client-specific windows.
\end{enumerate}

\section{Related work}
\label{sec:related}

This paper differs from prior guarantees in its prescribed clientwise
drift vector, simultaneously running unequal clocks, and the resulting
pooled statistical quantity, all introduced in
Sections~\ref{sec:refresh-model}--\ref{sec:adaptive-memory};
Table~\ref{tab:referee-comparison} in Appendix~\ref{app:prior-comparison}
compares assumptions and criteria.

\paragraph{Streaming and federated learning.}
\citet{marfoq2023streams} study fixed-weight population targets with
local caches and sample reuse; their laws are stationary, and their
optimization analysis relies on uniform gradient-discrepancy control.
\citet{huynh2025markovian} treat dependent Markovian streams and
nonconvex objectives relative to fixed stationary laws, and
\citet{zhao2026retention} optimize admission and retention under sampling
and buffer budgets for stationary independent streams; we track
persistently changing populations from independent samples with
uncharged storage and computation, and none of these models contains the
others. LASG's selective communication \citep{chen2020lasg} and
FedDrift's multi-model drift clustering \citep{jothimurugesan2023feddrift}
address different mechanisms; we contact every client and track one
fixed-weight objective.

\paragraph{Drift and adaptation.}
Master-FL \citep{ganguly2024nonstationary} uses detection and restarts to
control dynamic regret under unknown aggregate variation of empirical
losses, whereas our target fixes population weights and separates
clientwise gradient drift from information rates.
\citet{mazzetto2023adaptive} select a recent window under uniform
convergence and discrepancy assumptions. Our adaptive result instead
competes with fixed joint memory experts through an independent
validation stream. The aggregation and the delayed-feedback reduction
are standard \citep{freund1997hedge,joulani2013delayed}; the contribution
is that the pooled statistical comparison survives in the causal reuse
model, not a new drift-detection principle.

\paragraph{Optimization and lower bounds.}
Acceleration with reciprocal-count variance control is established
\citep{tyurin2025tight}, and single-stream tracking has guarantees under
strong convexity \citep{cutler2023drift}; historical reweighting admits
discrepancy and complexity bounds \citep{awasthi2023batchdrift}, and
\citet{mazzetto2023density} prove average-risk block lower bounds for
drifting distributions, whereas our construction retains unequal
client-group clocks under a prescribed drift vector.

\section{Setting}
\label{sec:refresh-model}

\paragraph{Objective.}
There are $M$ clients with known weights $w_m>0$, $\sum_m w_m=1$.
On a compact convex set $\cX\subset\mathbb R^d$ of diameter $D>0$, with
Euclidean projection $\proj$ and Euclidean norms throughout, we track
\[
 F^t(x)=\sum_{m=1}^M w_m f_m^t(x),\qquad
 f_m^t(x)=\E_{\xi\sim\mathcal D_m^t}\,\ell(x;\xi),\qquad t\ge0 .
\]
The sample loss $\ell$ is public. The population path
$(\mathcal D_m^t)_{m,t}$ is fixed before any sample noise is drawn and is
never revealed to the learner. Each $F^t$ is convex and $L$-smooth,
$L>0$, and a public bound $\|\nabla F^t(x)\|\le G$ holds for all $t\ge0$
and $x\in\cX$; compactness guarantees that the current minimizer exists,
and the gradient bound controls the loss before the first data-based
report.

\paragraph{Independent time-stamped samples.}
Client $m$ has a locally finite arrival count $N_m(t)$ with $N_m(0)=0$.
Conditionally on the complete exogenous array of arrival times, all
samples are independent across arrivals and clients, and a sample arriving
at time $s$ has law $\mathcal D_m^s$. A stored sample may be evaluated at
any later query point, and the oracle assumptions are pointwise: for each
fixed $x\in\cX$ and $\xi\sim\mathcal D_m^s$,
\begin{equation}
\label{eq:refresh-noise}
 \E\nabla\ell(x;\xi)=\nabla f_m^s(x),\qquad
 \E\|\nabla\ell(x;\xi)-\nabla f_m^s(x)\|^2\le\sigma_m^2 .
\end{equation}
No uniform concentration over sample-dependent query points is assumed.

\paragraph{Prescribed clientwise drift.}
A public vector $\gamma=(\gamma_m)_{m=1}^M$ with $\gamma_m\ge0$ bounds
the speed of each client's population gradient:
\begin{equation}
\label{eq:refresh-drift}
 \|\nabla f_m^t(x)-\nabla f_m^s(x)\|\le\gamma_m|t-s|,
 \qquad x\in\cX,\ s,t\ge0 .
\end{equation}
The bound is imposed client by client; an adversary may not reallocate
the aggregate budget $\bar\gamma:=\sum_m w_m\gamma_m$ across clients.
The fixed-memory rule of Section~\ref{sec:refresh} takes certified noise
and drift bounds as inputs; Section~\ref{sec:adaptive-memory} removes the
drift bounds from the inputs under an additional bounded-loss assumption
and an independent validation stream, without estimating them from the
optimization buffers.

\begin{definition}[Regular arrivals]
\label{def:refresh-regular}
For known positive rates $(\lambda_m)_{m=1}^M$, with
$\lambda_{\min}=\min_m\lambda_m$, a realized schedule is
$\lambda$-regular if, for every $m$ and all $t-s\ge2/\lambda_m$,
\[
 \tfrac12\lambda_m(t-s)\le N_m(t)-N_m(s)\le2\lambda_m(t-s).
\]
\end{definition}
The schedule $N_m(t)=\lfloor\lambda_mt\rfloor$ is $\lambda$-regular,
whereas a Poisson path in general is not; the lower inequality
guarantees enough buffered data, and the upper one limits the
information available in a hard instance. We work conditionally on a
fixed regular schedule, and all expectations concern the sample noise
and the learner's private randomization.

\paragraph{Communication and loss.}
One synchronization (client messages followed by a server broadcast) costs $\tau\ge0$ time units, regardless of the message sizes. Clients may
compute arbitrarily on their samples and the broadcast state. Computation and storage carry no time charge, and raw samples may be transmitted, so raw buffers may be pooled at the server. Consequently, the round counts we report describe one implementation. The reported point may depend only on information already received at the server, and its loss is
\begin{equation}
\label{eq:refresh-risk}
 \avgerr_T=\frac1T\int_0^T
 \E\Bigl[F^t(x_t)-\min_{x\in\cX}F^t(x)\Bigr]\,dt .
\end{equation}

\paragraph{Two statistical-drift quantities.}
For each client define
\begin{equation}
\label{eq:refresh-quantities}
 a_m=w_m\gamma_m,\qquad
 b_m=\frac{w_m^2\sigma_m^2}{\lambda_m},\qquad
 \Sg=\sum_m\sqrt{a_mb_m},\qquad
 \Hg=\Sg^{2/3},
\end{equation}
so that $\bar\gamma=\sum_m a_m$, $\Xi^2:=\sum_m b_m$ is the pooled uncertainty of the introduction, and $\Hg$ is the quantity displayed there. Client-specific memories replace the common-memory scale $\bar\gamma^{1/3}\Xi^{2/3}$ (derived in Appendix~\ref{app:benchmarks}) by $\Hg$, and by the Cauchy--Schwarz inequality
\begin{equation}
\label{eq:refresh-comparison}
 \Hg\le\bar\gamma^{1/3}\Xi^{2/3},
\end{equation}
with equality exactly when the vectors $(a_m)_m$ and $(b_m)_m$ are proportional. The gap can be large: for two clients with $a_1>0$, $a_2=0$ and $b_1,b_2>0$, the client-specific quantity is $(a_1b_1)^{1/3}$, whereas the common-memory quantity is $[a_1(b_1+b_2)]^{1/3}$. The stationary client's uncertainty $b_2$ has left the drift floor altogether; Theorem~\ref{thm:refresh-upper} shows that it reappears only as a finite-horizon transient.

\section{Client-specific memory under known drift bounds}
\label{sec:refresh}

A common memory ties the bias of a rapidly changing client to the noise
of every other client. When each client's drift bound is prescribed, a
different balance is possible: each client keeps its own window, and the
server solves a fresh stochastic problem on the frozen windows whenever
it refreshes its report.

\subsection{Data buffers, reset solves, and disjoint batches}
\label{sec:refresh-method}

Fix an integer $K\ge1$ (gradient rounds per solve), client memory
lengths $h_m\ge2K/\lambda_m$, and a reporting stride $\Delta>0$ with
$\Delta\ge K\tau$.
The memory of client $m$ is a \emph{completed window}: at each time
$ph_m$, $p=1,2,\ldots$, the buffer is replaced by the observations that
arrived in $((p-1)h_m,ph_m]$, and before the first complete window the
buffer holds the arrived prefix. A client with $\gamma_m=0$ may use
$h_m=\infty$ and keep its entire prefix; we then read $1/h_m=a_mh_m=0$.

Solve $j=0,1,\ldots$ starts at $s_j=s_0+j\Delta$ with
$s_0=2K/\lambda_{\min}$. At $s_j$ the server freezes a copy of every
client's current buffer, which is the latest completed window or, if
$s_j<h_m$, the prefix. The frozen interval of client $m$ has length
$h_{m,j}=\min\{s_j,h_m\}$, contains $n_{m,j}\ge K$ observations on a
$\lambda$-regular schedule, and consists of timestamps in
$(s_j-2h_{m,j},s_j]$; buffer replacements during the solve do not affect
the frozen copy. The solve lasts $c:=K\tau$ time units, its output is
reported on the interval $\Rint_j=[t_j,t_{j+1})$ with $t_j:=s_j+c$, and
a fixed public point $\xinit\in\cX$ is reported before $t_0$.

\begin{algorithm}[t]
\caption{Client-specific data refresh with reset accelerated solves}
\label{alg:refresh}
\begin{algorithmic}[1]
\REQUIRE $\xinit\in\cX$, $K$, $(h_m)$, $\Delta>0$ with $\Delta\ge K\tau$,
          $L$, certified $(\sigma_m)$
\FOR{solves $j=0,1,\ldots$ starting at $s_j=2K/\lambda_{\min}+j\Delta$}
\STATE freeze the client buffers; split each into $K$ nonempty batches
       $\batch_{m,j,1},\ldots,\batch_{m,j,K}$ of sizes differing by at
       most one, by timestamp order only
\STATE reset $x^0=v^0=\xinit$
\FOR{$k=1,\ldots,K$}
\STATE $\theta_k=2/(k+1)$;\quad
       $y^k=(1-\theta_k)x^{k-1}+\theta_kv^{k-1}$
\STATE each client returns the mean gradient of batch $\batch_{m,j,k}$
       at $y^k$; the server forms the $w$-weighted aggregate $g^k$
\IF{$k=1$}
\STATE clients also send their batch-size vectors; the server sets
       $V_{j,l}=\sum_m w_m^2\sigma_m^2/|\batch_{m,j,l}|$ for
       $l=1,\ldots,K$, $S_j=\sum_{l=1}^Kl^2V_{j,l}$, and
       $\eta_j=\min\{1/(4L),\,D/\sqrt{2S_j}\}$ (with $\eta_j=1/(4L)$ if
       $S_j=0$)
\ENDIF
\STATE $v^k=\proj(v^{k-1}-k\eta_jg^k)$;\quad
       $x^k=(1-\theta_k)x^{k-1}+\theta_kv^k$
\ENDFOR
\STATE report $x_j=x^K$ on $\Rint_j=[s_j+K\tau,\,s_{j+1}+K\tau)$
\ENDFOR
\end{algorithmic}
\end{algorithm}

Algorithm~\ref{alg:refresh} states the solve. Because $y^1=\xinit$ is
fixed, the batch counts travel with the first gradient message and the
server knows $\eta_j$ before its first update; every round contacts all
clients and costs $\tau$, and no round waits for fresh samples. Each
solve resets its iterates and gradient state: an observation may reappear
in later solves, but within one solve the $K$ batches are disjoint. The
reset is what permits this reuse under the pointwise
assumptions~\eqref{eq:refresh-noise}: query $y^k$ depends only on the
earlier batches of the \emph{same} solve, so the analysis reveals them in
order without conditioning on previous outputs that used overlapping
data (Appendix~\ref{app:refresh-upper}). Since messages are unrestricted,
the frozen buffers can instead be uploaded in one synchronization and
the same updates run at the server, in which case the bounds below hold
with $K\tau$ replaced by $\tau$ (Appendix~\ref{app:raw-buffer}): the
elapsed time $K\tau$ belongs to the gradient-message implementation, not
to the model.

\subsection{Tracking guarantee and optimal memory lengths}

\begin{theorem}[Client-specific refresh]
\label{thm:refresh-upper}
Under the assumptions of Section~\ref{sec:refresh-model}, condition all
expectations on a fixed $\lambda$-regular schedule and let $c=K\tau$ and
$\beta_0=4\sqrt{2/3}$. For Algorithm~\ref{alg:refresh}, every
$t\in\Rint_j$ satisfies
\begin{align}
\label{eq:refresh-pointwise}
\E[F^t(x_j)-\min_\cX F^t]
&\le \frac{4LD^2}{K(K+1)}
 +\beta_0D\sqrt{\sum_m\frac{b_m}{h_{m,j}}}
 +2D\sum_m a_mh_{m,j}
 +D\bar\gamma(c+\Delta),
\end{align}
and for every $T\ge s_0+c$ the reported trajectory satisfies
\begin{align}
\label{eq:refresh-average}
\avgerr_T
&\le \frac{4LD^2}{K(K+1)}
 +D\Bigl(2\sum_m a_mh_m+
         \beta_0\sqrt{\sum_m b_m/h_m}\Bigr)
 +D\bar\gamma(c+\Delta)
 +\transient_T,\\
\label{eq:refresh-transient}
\transient_T
&:=\beta_0D\,\Xi
   \Bigl(\frac{2}{\sqrt T}+\frac{\Delta}{T\sqrt{s_0}}\Bigr)
   +\frac{GD(s_0+c)}{T}.
\end{align}
\end{theorem}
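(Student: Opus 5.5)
The proof treats solve $j$ as an accelerated stochastic method applied to a fixed surrogate objective. That objective is built from the frozen windows:
\[
\widehat F_j(x)=\sum_m w_m\,\frac{1}{n_{m,j}}\sum_{i\in\text{frozen window}}f_m^{s_i}(x),
\]
the $w$-weighted average of the population losses at the timestamps $s_i$ of the frozen samples. The tracking error at $t\in\Rint_j$ then splits into three pieces: the optimization error of the solve, the statistical error of the batch gradients, and the bias between $\widehat F_j$ and $F^t$.

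\textbf{Step 1 (optimization and variance).} Reveal the batches $\batch_{m,j,1},\dots,\batch_{m,j,K}$ in order. The query $y^k$ is measurable with respect to batches $1,\dots,k-1$ of the same solve and the fixed $\xinit$. The batches are disjoint and independent conditional on the schedule, so by~\eqref{eq:refresh-noise} the aggregate $g^k$ is conditionally unbiased for $\nabla\widehat F^{(k)}_j(y^k)$, where $\widehat F^{(k)}_j$ is the time-averaged population objective of batch $k$. Its conditional variance is at most $V_{j,k}$. Batches differ slightly in their timestamp sets, which makes $\widehat F^{(k)}_j$ differ from $\widehat F_j$. I would absorb this difference into the drift term, since every timestamp lies in $(s_j-2h_{m,j},s_j]$.

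The standard analysis of projected accelerated gradient with weights $k\eta_j$ and $\theta_k=2/(k+1)$, with $\eta_j\le 1/(4L)$, gives
\[
\frac{4LD^2}{K(K+1)}+\frac{C\sqrt{S_j}D}{K^2}.
\]
Batch sizes are about $n_{m,j}/K$, and regularity gives $n_{m,j}\ge\tfrac12\lambda_m h_{m,j}$. Hence $V_{j,l}\lesssim K\sum_m b_m/h_{m,j}$, and $S_j\approx \tfrac{K^3}{3}\cdot K\sum_m b_m/h_{m,j}$. The resulting variance term is of order $D\sqrt{\sum_m b_m/h_{m,j}}$, and tracking the constants should produce $\beta_0=4\sqrt{2/3}$.

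\textbf{Step 2 (bias).} For $x\in\cX$ the drift bound~\eqref{eq:refresh-drift} gives
\[
\|\nabla f_m^{s}(x)-\nabla f_m^{t}(x)\|\le\gamma_m|t-s|.
\]
By convexity, a gradient-perturbation bound converts this into function-value suboptimality. Concretely, $F^t(x)-\min F^t\le \widehat F(x)-\min\widehat F+D\sup\|\nabla F^t-\nabla\widehat F\|$ up to a factor handled via the gradient-mapping form of the accelerated bound. The supremum splits as follows: distance from a frozen timestamp to $s_j$ is below $2h_{m,j}$, giving $2D\sum_m a_m h_{m,j}$; and $t-s_j\le c+\Delta$, giving $D\bar\gamma(c+\Delta)$. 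This yields~\eqref{eq:refresh-pointwise}.

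\textbf{Step 3 (averaging).} Integrate~\eqref{eq:refresh-pointwise} over $[0,T]$. Before $t_0=s_0+c$ the loss is at most $GD$, contributing $GD(s_0+c)/T$. Afterwards $h_{m,j}=\min\{s_j,h_m\}$. The bias term is bounded by $a_mh_m$ directly. For the variance term, use $\sqrt{\sum b_m/\min(s_j,h_m)}\le\sqrt{\sum b_m/h_m}+\Xi/\sqrt{s_j}$. Summing $\Delta\,\Xi/\sqrt{s_j}$ over the intervals and comparing with $\int dt/\sqrt t$ gives $2\Xi\sqrt T$ plus a first-interval correction $\Delta\Xi/\sqrt{s_0}$. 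Dividing by $T$ gives $\transient_T$.

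\textbf{Main obstacle.} I expect Step 1 together with the bias coupling to be the hardest part. The accelerated analysis must run with a slightly different objective per batch and still deliver exactly the stated constants. I would first try treating $g^k$ as an inexact gradient of the single surrogate $F^{s_j}$. The per-batch bias $\sum_m w_m\gamma_m\cdot 2h_{m,j}$ then enters linearly through $\inner{\cdot}{v^{k-1}-x^\star}\le D\cdot\text{bias}$, summed with weights $k\eta_j$, so that it contributes the $2D\sum a_mh_{m,j}$ term without compounding.
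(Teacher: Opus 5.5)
\textbf{The surrogate route in Steps~1--2 fails; your last paragraph's fallback is correct and is essentially the paper's proof.}

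Your Steps~1--2 run the accelerated analysis on the window-averaged surrogate $\widehat F_j$ (or the per-batch $\widehat F_j^{(k)}$). That step does not go through under the paper's assumptions. Only the aggregate $F^t=\sum_m w_mf_m^t$ at a \emph{common} time is assumed convex and $L$-smooth. The surrogate evaluates different clients, and different samples, at different timestamps. It is therefore not a mixture of the $F^t$, and it can be neither convex nor $L$-smooth.

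For instance, take two equally weighted clients with $f_1^t=g+\phi(t)u$ and $f_2^t=g-\phi(t)u$, where $u$ has bounded gradient but arbitrary curvature. Every $F^t=g$ is convex and the drift bounds hold. Yet $\widehat F_j=g+\tfrac12(\bar\phi_1-\bar\phi_2)u$, where $\bar\phi_m$ is the average of $\phi$ over client $m$'s frozen timestamps, and these averages generically differ. So no accelerated rate is available for $\widehat F_j$.

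Even granting one, the accounting you describe charges two discrepancies separately: the batch-versus-window discrepancy (up to $\sum_m a_mh_{m,j}$, the window spread) and the window-versus-$F^t$ discrepancy (up to $2\sum_m a_mh_{m,j}+\bar\gamma(t-s_j)$). That yields coefficient $3$ rather than $2$ on $D\sum_m a_mh_{m,j}$.

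The fallback in your last paragraph repairs this. Treat $g^k$ as a biased oracle for a genuine population objective. The conditional mean $\E[g^k\mid\Filt_{j,k-1}]=\sum_m (w_m/n_{m,j,k})\sum_{i\in\batch_{m,j,k}}\nabla f_m^{t_{m,i}}(y^k)$ lies within $2\sum_m a_mh_{m,j}$ of $\nabla F^{s_j}(y^k)$, and this bias is predictable. Next, $F^t-F^{s_j}$ is $\bar\gamma(t-s_j)$-Lipschitz on $\cX$. This gives your Step~2 transfer inequality exactly, with no extra factor and no gradient-mapping argument.

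The paper skips the transfer. It applies its biased-oracle lemma directly to $F=F^t$ with $\delta=2\sum_m a_mh_{m,j}+\bar\gamma(t-s_j)$, and the outcome is identical. Inside that lemma, pair the bias with $v^k-x^\star$ (both points lie in $\cX$) rather than with $v^{k-1}-x^\star$. It then costs exactly $\theta_k\delta D$ and leaves the squared-step budget free for the noise term.

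For the constant $\beta_0$:
\begin{itemize}
\item Use $n_{m,j}\ge K$ to get $n_{m,j,k}\ge\lfloor n_{m,j}/K\rfloor\ge n_{m,j}/(2K)$. Hence $V_{j,k}\le 2K\bar V_j$, where $\bar V_j=\sum_m w_m^2\sigma_m^2/n_{m,j}\le 2\sum_m b_m/h_{m,j}$.
\item Use $\sum_{k\le K}k^2\le K(K+1)^2/3$ to get $S_j\le\tfrac23\bar V_jK^2(K+1)^2$.
\item Then $2\sqrt2D\sqrt{S_j}/[K(K+1)]\le 4\sqrt{2/3}\,D\sqrt{\sum_m b_m/h_{m,j}}$.
\end{itemize}
Your Step~3 coincides with the paper's averaging argument.
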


The proof (Appendix~\ref{app:refresh-upper}) combines a projected
accelerated potential for a stochastic oracle with bounded gradient bias
and a filtration that exposes only the disjoint batches of the current
solve. The bias bound, controlled client by client through the sample
ages (including the solve and reporting lag), enters multiplied by $D$
independently of the number of internal iterations, and the stochastic
term comes from the independent batches. The four terms
of~\eqref{eq:refresh-average} are the optimization error, the
statistical-drift balance, the elapsed-time cost, and the finite-horizon
transient $\transient_T$; a client with an infinite window appears only
in the transient, through $\Xi$, so stationary clients are never charged
a permanent drift floor.

\begin{corollary}[Optimized heterogeneous windows]
\label{cor:refresh-opt}
Suppose $\Sg>0$ and $\sigma_m>0$ for every client with $\gamma_m>0$. For
such clients set
\[
 h_m^\star=(2/3)^{1/3}\Sg^{-1/3}\sqrt{b_m/a_m},
 \qquad h_m=\max\{h_m^\star,\,2K/\lambda_m\},
\]
and set $h_m=\infty$ when $\gamma_m=0$. Then
\begin{align}
\label{eq:refresh-optimized}
\avgerr_T
\le {}&6(2/3)^{1/3}D\Hg
 +\frac{4LD^2}{K(K+1)}
 +4KD\sum_m\frac{a_m}{\lambda_m}
 +D\bar\gamma(K\tau+\Delta)+\transient_T .
\end{align}
Before the sample-count floors are imposed, $(h_m^\star)$ exactly
minimizes the statistical-drift expression in parentheses
in~\eqref{eq:refresh-average}.
\end{corollary}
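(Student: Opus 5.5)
We plug the proposed windows into the time-averaged bound~\eqref{eq:refresh-average} of Theorem~\ref{thm:refresh-upper}, which we take as given. Only the bracketed statistical-drift expression
\[
\Phi(h)=2\sum_m a_mh_m+\beta_0\sqrt{\sum_m b_m/h_m}
\]
needs new work. The optimization term, the elapsed-time term $D\bar\gamma(c+\Delta)$ with $c=K\tau$, and $\transient_T$ carry over unchanged.

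\textbf{Step 1: the unconstrained minimizer.} Clients with $\gamma_m=0$ use $h_m=\infty$. By the convention $1/h_m=a_mh_m=0$ they contribute nothing to $\Phi$, so only clients with $a_m>0$ remain, and these have $b_m>0$ because $\sigma_m>0$. Write $Q=\sum_m b_m/h_m$. The first-order condition for $\Phi$ is
\[
2a_m=\frac{\beta_0}{2\sqrt Q}\,\frac{b_m}{h_m^2},
\qquad\text{so}\qquad
h_m=\kappa\sqrt{b_m/a_m}
\]
for a common constant $\kappa$.

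Substituting this form gives $\sum_m a_mh_m=\kappa\Sg$ and $Q=\Sg/\kappa$. Then $\Phi=2\kappa\Sg+\beta_0\sqrt{\Sg/\kappa}$, which is convex in $\kappa$. Differentiating, $2\Sg=\tfrac{\beta_0}{2}\Sg^{1/2}\kappa^{-3/2}$, hence $\kappa^{3/2}=\beta_0/(4\sqrt{\Sg})$. With $\beta_0/4=\sqrt{2/3}$ this gives $\kappa=(2/3)^{1/3}\Sg^{-1/3}$, which is exactly $h_m^\star$.

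The value at this point is $2\kappa\Sg+\beta_0\Sg^{1/2}\kappa^{-1/2}$. Using $\beta_0\kappa^{-1/2}=4\kappa$, this equals $6\kappa\Sg=6(2/3)^{1/3}\Sg^{2/3}=6(2/3)^{1/3}\Hg$.

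The function $\Phi$ is jointly convex on $(0,\infty)^M$: the map $h\mapsto\sum b_m/h_m$ is convex, and the square root of a positive convex function of this separable form stays convex here. A stationary point of a convex function is a global minimizer. Alternatively, exact minimality follows from a weighted AM–GM or Hölder argument. This proves the final sentence of the corollary.

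\textbf{Step 2: the floors.} Now take $h_m=\max\{h_m^\star,2K/\lambda_m\}$, which satisfies the algorithm's requirement $h_m\ge 2K/\lambda_m$.

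The variance part is monotone decreasing in each $h_m$, so increasing $h_m$ can only lower $\sqrt{\sum b_m/h_m}$. That part is therefore at most its value at $h^\star$, namely $4\kappa\Sg$.

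For the drift part, $h_m\le h_m^\star+2K/\lambda_m$ gives
\[
2\sum_m a_mh_m\le 2\kappa\Sg+4K\sum_m a_m/\lambda_m .
\]
Adding the two parts yields $6(2/3)^{1/3}D\Hg+4KD\sum_m a_m/\lambda_m$. Combined with the remaining terms of~\eqref{eq:refresh-average}, this gives~\eqref{eq:refresh-optimized}.

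\textbf{Main obstacle.} The only delicate point is legitimacy, not algebra. Infinite windows must be admissible in Theorem~\ref{thm:refresh-upper}, so that with $h_{m,j}=s_j$ growing the stationary clients appear only through $\transient_T$ and $\Xi$. This is exactly what the theorem's convention is designed to give.
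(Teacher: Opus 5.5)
\textbf{Verdict: correct, by a different route for the minimality claim, with three small repairs needed.} Your Step~2 (the floors) is exactly the paper's argument, and your final constants are right. The difference lies in how you certify that $h^\star$ is the exact minimizer.

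\textbf{How the two routes differ.} The paper never differentiates in $h$. Lemma~\ref{lem:window-functional} uses Cauchy--Schwarz, $(\sum_m a_mh_m)(\sum_m b_m/h_m)\ge\Sg^2$, with equality exactly on proportional windows. Hence, for each fixed value of $\sum_m b_m/h_m$, the drift sum is smallest on the ray $h_m=\varrho\sqrt{b_m/a_m}$. The problem then collapses to minimizing $2\varrho\Sg+\beta_0\sqrt{\Sg/\varrho}$ over $\varrho>0$. This yields a lower bound valid for every $h$ without any smoothness or convexity argument, and the general $(\alpha,\beta)$ form of the lemma is reused for the SGD windows. Your route (first-order conditions plus joint convexity) also works and shows where the proportional shape comes from. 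It would also extend to other convex statistical-drift functionals where no Cauchy--Schwarz identity is available.

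\textbf{Repairs needed in your Step~1.}
\begin{enumerate}
\item ``The square root of a positive convex function stays convex'' is not a valid principle: $x\mapsto\sqrt{x}$ on $(0,\infty)$ is a counterexample. The claim is nonetheless true here. The reason is that $\sqrt{\sum_m b_m/h_m}=\|(\sqrt{b_m}\,h_m^{-1/2})_m\|_2$ is the Euclidean norm composed with the convex nonnegative maps $h_m\mapsto\sqrt{b_m}\,h_m^{-1/2}$, and the norm is convex and coordinatewise nondecreasing on the nonnegative orthant.
\item Your derivation only shows that any stationary point must equal $h^\star$. To invoke ``a stationary point of a convex function is a global minimizer,'' you must check that $h^\star$ actually is stationary. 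Minimizing over the ray alone does not suffice for a convex function. Concretely, the constant $\kappa^2=\beta_0/(4\sqrt Q)$ from the first-order condition must be consistent with $Q=\Sg/\kappa$. It is, because this reduces to your $\kappa^{3/2}=\beta_0/(4\Sg^{1/2})$. Alternatively, coercivity of $\Phi$ on $(0,\infty)^M$ gives an interior minimizer, which must then be $h^\star$.
\item The identity you use should read $\beta_0\Sg^{1/2}\kappa^{-1/2}=4\kappa\Sg$, not $\beta_0\kappa^{-1/2}=4\kappa$. The value $6\kappa\Sg=6(2/3)^{1/3}\Hg$ that you state is still correct.
\end{enumerate}
The Cauchy--Schwarz route avoids all three issues at once.
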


The optimal lengths satisfy $h_m^\star/h_n^\star=\sqrt{b_ma_n/(a_mb_n)}$:
a client receives a longer memory when its weighted uncertainty is large
relative to its drift. For $\tau>0$ one may take $\Delta=K\tau$, and the
nontransient remainders in~\eqref{eq:refresh-optimized} are then at most
$4LD^2/K^2+DK(4\sum_m a_m/\lambda_m+2\bar\gamma\tau)$; they record the
sample requirements and elapsed time of the gradient-message
implementation (the $K$-dependent communication charge disappears under
the raw-buffer implementation), and we do not claim that they are minimax
sharp; comparison~\eqref{eq:refresh-comparison} concerns the
statistical-drift term alone. The solver is not the mechanism: reset
projected SGD on the same disjoint batches, averaged over its post-update
iterates, satisfies the same guarantees with $LD^2/K$ in place of
$4LD^2/[K(K+1)]$ and $\beta_{\rm sgd}=2\sqrt2$ in place of $\beta_0$
(Proposition~\ref{prop:refresh-sgd}), so $\Hg$ is determined by the
memories and the independent batches, and acceleration only improves the
optimization remainder.

\section{Is client-specific memory necessary?}
\label{sec:refresh-lower}

Theorem~\ref{thm:refresh-upper} and inequality~\eqref{eq:refresh-comparison}
compare upper bounds. This section gives two converse statements: a
minimax lower bound that matches $D\Hg$, in an explicit regime, up to a
factor depending only on the number of distinct memory scales, and an
exact periodic calculation in which the best shared window loses to a
client-specific policy by an unbounded factor.

\subsection{A minimax lower bound with fixed clientwise drift}

\paragraph{The comparison class.}
For a fixed schedule $\sched$ and horizon $T$, let $\Pclass_T(\sched;\gamma)$
be the set of admissible instances: a public sample loss with its oracle,
and client stream laws satisfying Section~\ref{sec:refresh-model} with
the prescribed parameters, on the segment
$\cX_D=[-D/2,D/2]\times\{0\}^{d-1}$. Let $\Aclass_T(\sched)$ be the set
of causal rules that receive every raw sample as soon as it arrives, may
compute arbitrarily, and may use a private seed independent of the
samples; a rule may know the loss map, the public parameters, and the
schedule, is fixed before the hidden stream laws are chosen, and learns
the population path only through arrived observations. Writing
$\Risk_T(\alg,\inst\mid\sched)$ for the loss~\eqref{eq:refresh-risk} of
rule $\alg$ on instance $\inst$, the minimax risk is
\[
 \Risk_T^\star(\gamma)
 :=\inf_{\alg\in\Aclass_T(\sched)}\
   \sup_{\inst\in\Pclass_T(\sched;\gamma)}
   \Risk_T(\alg,\inst\mid\sched).
\]
Since the class grants immediate access to all arrived samples, every
lower bound on $\Risk_T^\star(\gamma)$ also applies to protocols that pay
for communication; since the supremum ranges over the loss, the bound is
not asserted for every externally fixed loss.

\paragraph{The construction.}
The hard instance runs independent triangular sign blocks on disjoint
client groups, each with its own clock, on one segment
(Theorem~\ref{thm:refresh-lower}, Appendix~\ref{app:formal-statements}).
Posterior uncertainty about the current signs stays separate across
groups, a moment argument converts it into expected linear optimization
loss, and a vector integral inequality preserves the loss over the
horizon, turning a hard instance at one instant into a time-average
bound.

\begin{corollary}[Comparison across memory scales]
\label{cor:refresh-dyadic}
Under the positivity assumptions of Theorem~\ref{thm:refresh-lower},
suppose that at least one $\gamma_m>0$. For the clients with
$\gamma_m>0$, let $q_m=\sqrt{b_m/a_m}$ and partition them into $J$
nonempty dyadic bins $\mathcal J_1,\ldots,\mathcal J_J$ of the values
$q_m$. For each bin put
\[
 Q_j=\sum_{m\in\mathcal J_j}\frac{\lambda_m\gamma_m^2}{\sigma_m^2},
 \quad T_j=(128Q_j)^{-1/3},\quad
 C_j=\frac{T_j}{2}\sum_{m\in\mathcal J_j}w_m\gamma_m,
 \quad
 \mathcal S_j=\sum_{m\in\mathcal J_j}\sqrt{a_mb_m}.
\]
If $T_j\ge2/\min_{m\in\mathcal J_j}\lambda_m$ for every $j$,
$T\ge\max_jT_j$, and $G\ge\sum_jC_j$, then
\begin{align}
 \Risk_T^\star(\gamma)
 &\ge c_{\rm het}D
       \Bigl(\sum_{j=1}^J\mathcal S_j^{4/3}\Bigr)^{1/2}
       \label{eq:refresh-group-lower}\\
 &\ge c_{\rm het}D\,\frac{\Hg}{J^{1/6}},
 \qquad c_{\rm het}=\frac{1/\sqrt3-1/4}{32\cdot128^{1/3}} .
       \label{eq:refresh-dyadic-lower}
\end{align}
The bins can be chosen so that
$J\le1+\lceil\log_2(q_{\max}/q_{\min})\rceil$.
\end{corollary}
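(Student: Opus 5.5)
The plan is to obtain \eqref{eq:refresh-group-lower} by applying Theorem~\ref{thm:refresh-lower} to the partition of the drifting clients into the dyadic bins, one independent group per bin. I will take that theorem in the form suggested by its description: independent triangular sign blocks run on disjoint client groups, group $j$ on a clock of period $T_j$ and amplitude $C_j$, and the minimax risk is bounded below by a constant times $D(\sum_j C_j^2)^{1/2}$. The square root of a sum of squares is what a vector integral inequality for independent group signs produces. Clients with $\gamma_m=0$ are assigned to no group and made stationary, which only shrinks the class. The hypotheses $T_j\ge2/\min_{m\in\mathcal J_j}\lambda_m$, $T\ge\max_jT_j$ and $G\ge\sum_jC_j$ are exactly what the theorem needs: enough regular arrivals within one block, at least one full block before the horizon, and an admissible gradient bound for the summed linear losses.

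The first computation is to show $C_j\asymp\mathcal S_j^{2/3}$ inside a bin. Write $a_m=w_m\gamma_m$ and $b_m=w_m^2\sigma_m^2/\lambda_m$. Then $\lambda_m\gamma_m^2/\sigma_m^2=w_m^2\gamma_m^2/b_m=a_m^2/b_m=a_m/q_m^2$. Within a dyadic bin all $q_m$ lie in $[q_j,2q_j)$, so
\[
Q_j\asymp q_j^{-2}\sum_{m\in\mathcal J_j}a_m,\qquad \mathcal S_j=\sum_{m\in\mathcal J_j}a_mq_m\asymp q_j\sum_{m\in\mathcal J_j}a_m .
\]
Writing $A_j=\sum_{m\in\mathcal J_j}a_m$, this gives
\[
C_j=\tfrac12(128Q_j)^{-1/3}A_j\asymp q_j^{2/3}A_j^{2/3}\asymp\mathcal S_j^{2/3},
\]
with explicit constants coming from the factor $2$ in the bin width and from $128^{-1/3}$. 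Squaring, $\sum_jC_j^2\gtrsim\sum_j\mathcal S_j^{4/3}$, which is \eqref{eq:refresh-group-lower}. The stated $c_{\rm het}$ should absorb the theorem's moment constant $1/\sqrt3-1/4$, the factor $1/32$, and $128^{-1/3}$. The step I expect to be the main obstacle is tracking these constants so that the dyadic loss factor lands exactly in $c_{\rm het}$, because the distortion factor, a power of $2$, must be absorbed on the correct side.

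The second inequality \eqref{eq:refresh-dyadic-lower} follows from H\"older. Since $\Sg=\sum_j\mathcal S_j$, applying H\"older with exponents $4/3$ and $4$ gives
\[
\sum_j\mathcal S_j\le\Bigl(\sum_j\mathcal S_j^{4/3}\Bigr)^{3/4}J^{1/4}.
\]
Hence $(\sum_j\mathcal S_j^{4/3})^{1/2}\ge\Sg^{2/3}J^{-1/6}=\Hg J^{-1/6}$.

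Finally, the bin count bound comes from choosing the bins $[q_{\min}2^{i},q_{\min}2^{i+1})$ for $i=0,\ldots,\lceil\log_2(q_{\max}/q_{\min})\rceil$ and discarding the empty ones. At most $1+\lceil\log_2(q_{\max}/q_{\min})\rceil$ bins remain.
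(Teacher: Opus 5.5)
Your proposal is correct and follows the paper's proof. The dyadic bins serve as the groups of Theorem~\ref{thm:refresh-lower}, the identity $\lambda_m\gamma_m^2/\sigma_m^2=a_m^2/b_m=a_m/q_m^2$ gives $C_j\gtrsim\mathcal S_j^{2/3}$, and H\"older plus the dyadic bin count finish the argument. Carried out through the bin sum $\sum_{m\in\mathcal J_j}a_m$, your version yields $C_j\ge\mathcal S_j^{2/3}/(2^{5/3}\,128^{1/3})$, slightly better than the paper's $\mathcal S_j^{2/3}/(4\cdot128^{1/3})$, so $c_{\rm het}$ follows with room to spare.

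The only hypothesis you left unchecked is the theorem's drift and information constraint, and it holds by construction. Take amplitudes $A_m=\gamma_mT_j/2$, so that $C_j=\sum_{m\in\mathcal J_j}w_mA_m$ and $4T_j\sum_{m\in\mathcal J_j}\lambda_mA_m^2/\sigma_m^2=T_j^3Q_j=1/128$. This is exactly why $T_j=(128Q_j)^{-1/3}$.
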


The group-sensitive bound~\eqref{eq:refresh-group-lower} remembers how
the mass $\Sg=\sum_j\mathcal S_j$ is spread across scales; the
simplification~\eqref{eq:refresh-dyadic-lower} is H\"older's inequality,
tight for equal bin masses and sharper otherwise. Combined
with~\eqref{eq:refresh-optimized}, it gives a constant-factor match for
bounded $J$ whenever the optimization, timing, and transient terms of the
upper bound are $O(D\Hg)$; Corollary~\ref{cor:refresh-joint}
(Appendix~\ref{app:formal-statements}) makes these conditions explicit,
under them
$c_{\rm het}D\Hg/J^{1/6}\le\Risk_T^\star(\gamma)\le[6(2/3)^{1/3}+3]D\Hg$,
and Appendix~\ref{app:refresh-regime} exhibits a family that satisfies
them with arbitrarily many scales and positive communication time. On
that family no retuning of the disjoint-group construction closes the
$J^{1/6}$ gap (a limitation of the construction, not a minimax
impossibility); whether the factor is necessary for unrestricted scales
remains open. Appendix~\ref{app:refresh-lower} also lower-bounds the
finite-horizon cost of stationary clients
(Proposition~\ref{prop:refresh-stationary}), and
Appendix~\ref{app:formal-statements} treats samples whose source time
precedes their arrival.

\subsection{An exact separation for uniform completed windows}
\label{sec:periodic-memory}

Inequality~\eqref{eq:refresh-comparison} does not by itself show that
a common-memory method loses. We therefore compare actual risks within a
specified estimator class: reset empirical minimizers that weight every
observation in each client's latest completed buffer uniformly and hold
their output until a buffer changes; a \emph{shared} window uses one
fixed length $h$ at every client, and the class also contains the full
prefix.

\begin{proposition}[A strict separation within this estimator class]
\label{prop:periodic-main}
Let $\ell(x;Y)=x^2/2-Yx$ on $[-2,2]$, with two equally weighted clients
and $\tau=0$. For an integer $r\ge2$, client one has mean
$\cos(t+\Phi)$, rate $r$, and Rademacher noise of scale $1/r$; client
two has mean zero, rate one, and independent Rademacher noise of scale
$\sqrt2$. The phase $\Phi$ is arbitrary and fixed. Every finite shared
window $h\ge1$ has limiting inferior risk at least $1/16$, and the full
prefix attains $1/16$. The separate choice $h_1=1/r$, $h_2=\infty$ has
long-run risk
\begin{equation}
\label{eq:periodic-main}
 \frac18\Bigl[1-\frac{\sin(1/r)}{1/r}+\frac1{r^2}\Bigr]
 \le\frac7{48r^2}.
\end{equation}
Hence the ratio between the best shared-window risk and the risk of this
separate policy is at least $3r^2/7$, on a fixed compact domain.
\end{proposition}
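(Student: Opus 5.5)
The plan is to reduce everything to an exact bias–variance computation for weighted sample means. With $\ell(x;Y)=x^2/2-Yx$ and equal weights, $F^t(x)=x^2/2-\mu(t)x$ with $\mu(t)=\tfrac12\cos(t+\Phi)$. Hence $F^t(x)-\min F^t=\tfrac12(x-\mu(t))^2$ whenever $|\mu(t)|\le2$, which always holds.

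\textbf{Step 1: no projection binds.} A reset empirical minimizer on uniformly weighted buffers is the unconstrained minimizer $\hat x=\tfrac12(\hat m_1+\hat m_2)$, where $\hat m_i$ is client $i$'s buffer mean. I first check that it is never clipped to $[-2,2]$. Client one's observations are bounded by $1+1/r\le 3/2$ and client two's by $\sqrt2$, so $|\hat x|<2$. The risk is therefore exactly $\tfrac18\E[(\hat m_1-\cos(t+\Phi))+\hat m_2]^2$. Conditionally on the deterministic schedule $N_m(t)=\lfloor\lambda_mt\rfloor$, the two noises are independent and centred. This gives
\[
\tfrac18\bigl[\mathrm{bias}_1(t)^2+\mathrm{Var}\,\hat m_1+\mathrm{Var}\,\hat m_2\bigr].
\]

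\textbf{Step 2: the drift bias of a completed window.} For a completed window of length $h$ held on $[ph,(p+1)h)$, the window mean of $\cos(\cdot+\Phi)$ is $\mathrm{sinc}(h/2)\cos(\text{centre}+\Phi)$, up to a discretization correction for the arrival grid. I will compute this exactly, or bound it, since the counts are exactly $rh$ and $h$ on the integer-rate grid. I then integrate $(\text{window mean}-\cos(t+\Phi))^2$ over the holding interval. The lag from the window centre ranges over $[h/2,3h/2]$, and averaging $\cos$ over this lag gives $\cos(h)\,\mathrm{sinc}(h/2)$. Averaging the resulting expression over $p$ then needs the window phases $ph\bmod 2\pi$ to equidistribute, which holds by Weyl's theorem when $h/\pi$ is irrational. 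When $h$ is a rational multiple of $\pi$, the sum over $p$ is a finite periodic average. For the lower bound I will instead use $\liminf$ and the elementary inequality that averages of $\cos^2$ over any complete orbit are at least the stated values; this is the reason the statement is phrased with a limit inferior. Doing this yields a closed form $B(h)$ for the long-run mean squared bias.

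\textbf{Step 3: the shared-window lower bound.} For a shared finite window $h\ge1$, I drop $\mathrm{Var}\,\hat m_1$ and keep $\mathrm{Var}\,\hat m_2=2/\lfloor h\rfloor$-type terms, which are at least of order $2/h$. It then remains to show $\tfrac18 B(h)+\tfrac14\cdot\tfrac1{h}\gtrsim\tfrac1{16}$, which reduces to an inequality of the form $16\sin^2(h/2)(2\cos h-1)/h\le1$. For large $h$ (say $h\ge16$) this is immediate. On the compact range $1\le h\le16$ I expect to argue by signs of $2\cos h-1$ and direct estimates on finitely many subintervals. This case analysis is the main obstacle, and I would first try to see whether the exact discrete counts supply enough slack to avoid a fine subdivision. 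The full prefix makes both sample means converge to long-run averages with vanishing variance, so $\hat m_1\to0$ and $\hat m_2\to0$. The risk then tends to $\tfrac18\cdot\overline{\cos^2}=\tfrac1{16}$.

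\textbf{Step 4: the separate policy and the ratio.} With $h_1=1/r$, client one's buffer holds exactly one sample, so its variance is $1/r^2$. Client two's prefix variance vanishes in the long run. Specializing the bias formula of Step 2 to $h=1/r$, and accounting for the exact single-timestamp location, should give long-run bias $1-\sin(1/r)/(1/r)$. Adding the variance yields \eqref{eq:periodic-main}. Using $1-\sin x/x\le x^2/6$ then gives the bound $\tfrac18(\tfrac16+1)r^{-2}=\tfrac7{48r^2}$. Dividing $\tfrac1{16}$ by this bound gives the ratio $3r^2/7$.
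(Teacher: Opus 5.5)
Your Steps 1 and 4, the prefix limit, and the ratio $3r^2/7$ agree with the paper. The paper obtains the separate-policy risk from $\Upsilon_1(\varphi)=2-2\sinc(\varphi)$, and it removes the phase average because the Ces\`aro mean of $e^{2\mathrm{i}p/r}$ vanishes.

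The gap is in Steps 2--3, the shared-window lower bound. The claim covers every real $h\ge1$ and every fixed $\Phi$. Your closed form $B(h)$, however, rests on the counts being exactly $rh$ and $h$, which is true only for integer $h$. For real $h$, the counts $\lfloor ph\rfloor-\lfloor (p-1)h\rfloor$ and $\lfloor rph\rfloor-\lfloor r(p-1)h\rfloor$ change with $p$. The means of the discrete samples are then not $\sinc(h/2)\cos(\cdot)$, and there is no single $B(h)$ to insert into a trigonometric inequality. You also leave open the resonant lengths $h\in\pi\mathbb Z$. There $e^{2\mathrm{i}h}=1$, the Ces\`aro mean does not vanish, and the fixed-phase bias depends on $\Phi$. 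Saying you will ``use $\liminf$ and averages over complete orbits'' does not identify the inequality being used. As planned, the argument covers at most integer $h$, and even there the continuous $\sinc$ must be replaced by the discrete kernel $\mathcal K_n$.

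The missing idea is that the bias never has to be computed. On each holding interval $[ph,(p+1)h)$ the report is constant. So whatever its deterministic mean $c_p$, the average of $(c_p-\cos(t+\Phi))^2$ over that interval is at least the within-interval variance of $\cos(t+\Phi)$. The mean of $\cos(t+\Phi)$ over any interval of length $h$ has modulus at most $|\sinc(h/2)|$, and the long-run average of $\cos^2$ is $1/2$. Hence the long-run squared bias of $\hat m_1-\cos(t+\Phi)$ is at least $\frac12-\sinc^2(h/2)$. This holds for every phase and every real $h$, regardless of the arrival grid.

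For the variance, let $n_{2,p}$ be client two's count in the $p$th window. Jensen's inequality gives $\liminf_P P^{-1}\sum_{p\le P}1/n_{2,p}\ge1/h$. Combine this with $h\sinc^2(h/2)\le2$, which is just $1-\cos h\le h$. The result is $\liminf\ge\frac18[\frac12-\sinc^2(h/2)+\frac2h]\ge\frac1{16}$, which is Proposition~\ref{prop:periodic-common-prefix} specialized to this instance.

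Your anticipated case analysis on $[1,16]$ is unnecessary even within your idealized formula. The correct reduction there is $\sin^2(h/2)(2\cos h-1)\le h$, with no factor $16$. And $\sin^2(h/2)(2\cos h-1)=\frac12(1-\cos h)(2\cos h-1)\le\frac1{16}$ by AM--GM, so the inequality holds immediately.
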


The proof (Appendix~\ref{app:periodic-memory}) separates the
within-block variation of the signal from the variance of uniform sample
averages: the stationary client's noise makes every finite shared window
worse than the prefix, while the changing client benefits from short,
accurate buffers. The same calculation yields an exact frequency-response
criterion for when a finite window beats the prefix at a single client
(the reduction in squared temporal bias must exceed the variance
$\sigma_m^2/n_m$ of the average of the $n_m$ window samples), which
predicts failures as well as benefits.
The separation concerns uniform completed-window estimators; a learner
with common access to the full history can implement client-specific
weights itself and is not covered.

\section{Choosing among joint memories without drift inputs}
\label{sec:adaptive-memory}

Corollary~\ref{cor:refresh-opt} identifies good memories when the drift
bounds are available; when they are not, a finite collection of joint
memory vectors can be combined on the basis of an independent validation
stream, under one additional assumption: the sample loss has bounded
oscillation at each client,
$|\ell(x;\xi)-\ell(y;\xi)|\le B_m$ for all $x,y\in\cX$, and
$B:=\sum_m w_mB_m>0$. Bounded-feature logistic loss on a compact domain
satisfies this. The noise and rate bounds remain known, but the learner
does not know $\gamma$.

\paragraph{Procedure.}
Alternate arrivals at each client are assigned to a training and a
validation stream. For each of $E$ fixed memory vectors $h^1,\ldots,h^E$,
a separate reset SGD \emph{state} runs on the shared training array with
disjoint batches within each solve; these experts never see validation
samples or mixture weights. The server reports a convex mixture of the
expert outputs. Fresh validation samples from each reporting interval
score all experts, and two interleaved exponential-weight forecasters
(for even and for odd intervals) ensure that a full interval of feedback
has arrived before the forecaster that used it reports again. The
aggregation principle is standard \citep{freund1997hedge}; the
independent split and a source-time correction make it valid here.
Appendix~\ref{app:adaptive-memory} specifies the communication pipeline
and proves the complete form of the following statement
(Theorem~\ref{thm:adaptive-memory}).

\begin{theorem}[Oracle comparison for joint memory choices]
\label{thm:adaptive-summary}
Assume the model of Section~\ref{sec:refresh-model} with the bounded
oscillation above, except that the drift bounds need not be known. Let
$s_0=6K/\lambda_{\min}$, $c=K\tau$,
$\Delta\ge\max\{c,\,6/\lambda_{\min}\}$, and $T=s_0+c+R\Delta$ for an
integer $R\ge1$. Choose $E\ge2$ deterministic memory vectors,
independently of the data, with $6K/\lambda_m\le h_m^e\le T$, and let
$\Risk_T^e$ be the risk of the training-stream SGD expert $e$, including
initialization. The risk $\Risk_T^{\rm mix}$ of the causal mixture
satisfies
\begin{equation}
\label{eq:adaptive-main}
 \Risk_T^{\rm mix}
 \le \min_{e\in[E]}\Risk_T^e
       +2B\sqrt{\frac{\Delta\log E}{T}}
       +D\bar\gamma\Delta .
\end{equation}
Neither the updates nor the learning rate uses a drift bound.
\end{theorem}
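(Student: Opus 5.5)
The plan is to reduce the statement to a standard exponential-weights regret bound over the $R$ reporting intervals, with three corrections handled one at a time: the delayed feedback, the mismatch between validation samples and the current objective, and the initial segment.

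\textbf{Step 1: from time-average risk to per-interval losses.} Write $\Risk_T^{\rm mix}$ as $\frac1T$ times the integral over $[0,s_0+c)$ plus the sum over the intervals $\Rint_j$, $j=0,\ldots,R-1$, each of length $\Delta$. The initial segment is common to all experts, since they all report $\xinit$ there, so it cancels in the comparison. On $\Rint_j$ the mixture reports $\bar x_j=\sum_e p_{j,e}x_j^e$. By convexity of $F^t$, the contribution of $\bar x_j$ is at most $\sum_e p_{j,e}\int_{\Rint_j}[F^t(x_j^e)-\min F^t]\,dt$. This reduces the problem to linear losses $L_{j,e}:=\int_{\Rint_j}F^t(x^e_j)\,dt$ played against weights $p_j$.

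\textbf{Step 2: unbiased surrogate losses from validation.} The score $\vscore_{j,e}$ is the $w$-weighted average of $\ell(x_j^e;\xi)$ over the fresh validation samples in $\Rint_j$. Two facts make this usable. First, the experts and the weights $p_j$ are measurable with respect to the training array and the validation data of earlier same-parity intervals. The validation samples of $\Rint_j$ are therefore independent of $(x_j^e)_e$ and $p_j$, so pointwise unbiasedness applies conditionally. Second, each sample has law $\mathcal D_m^s$ with $s\in\Rint_j$. Its source time differs from $t\in\Rint_j$ by at most $\Delta$, so
\[
|f_m^s(x)-f_m^t(x)-(f_m^s(y)-f_m^t(y))|\le \gamma_m\Delta D
\]
by integrating the gradient drift along the segment from $y$ to $x$. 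Summing with weights $w_m$ gives the additive term $D\bar\gamma\Delta$. Differences of scores between experts lie in $[-B,B]$ after centering, so the range is $B$; this is where bounded oscillation is used.

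\textbf{Step 3: delayed feedback via interleaving.} Because scores of $\Rint_j$ arrive only at its end plus communication, the even forecaster uses intervals $0,2,4,\ldots$ and the odd forecaster uses $1,3,\ldots$. Each forecaster has access to all of its own past feedback when it next reports; the condition $\Delta\ge c$ guarantees the feedback has arrived. Each forecaster runs Hedge on about $R/2$ rounds with losses in a range of width $B\Delta$ per round, giving expected regret $B\Delta\sqrt{(R/2)\log E/2}$ against any fixed $e$ (Hoeffding-lemma Hedge with tuned $\etaew$). Summing the two halves gives at most $B\Delta\sqrt{R\log E}$, and since each comparator term is individually bounded by expert $e$'s own loss, taking the minimum over $e$ after summing is valid. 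Dividing by $T\ge R\Delta$ yields $B\sqrt{\Delta\log E/T}$, and constants absorb to $2B$. Since the losses for the comparator expert are fixed before validation noise, taking expectations of Hedge's pathwise regret against the surrogate losses and applying Step 2 converts everything to true risks.

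\textbf{Main obstacle.} The main obstacle is Step 2's measurability: I must verify that the communication pipeline really keeps validation samples of $\Rint_j$ out of every expert state and out of $p_{j'}$ for $j'\le j$ of the same parity. I also need to check that a conditional-expectation argument, rather than a union over sample-dependent points, suffices for bias transfer. The sample-count requirements $h_m^e\ge6K/\lambda_m$ and $\Delta\ge6/\lambda_{\min}$ are used only so that the halved training and validation streams remain regular and each validation block is nonempty.
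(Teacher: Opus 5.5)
Your proposal is correct and follows the paper's proof essentially step for step. Both condition on the training array so that the fresh validation scores are unbiased for the source-time objective, run the two parity-interleaved exponential-weight forecasters with a Hoeffding-lemma potential bound, pass from the mixture to the experts by convexity of the interval-averaged objective, and charge $D\bar\gamma\Delta$ for the mismatch between validation source times and report times before dividing by $T\ge R\Delta$. The differences are cosmetic. You use the across-expert range $B$, which is slightly sharper than the paper's bound $|\vscore_{j,e}|\le B$ of width $2B$; that bound, combined with the paper's fixed rate $\etaew=\frac2B\sqrt{\log E/R}$, gives exactly $2B\sqrt{R\log E}$. Also, the count conditions need only the lower count inequality for the thinned streams, not two-sided regularity.
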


Any interval whose length $v-u$ is at least $6/\lambda_m$ contains at
least $\lambda_m(v-u)/6$ training samples of client $m$, so every expert
enjoys the fixed-memory SGD guarantee with $b_m$ replaced by $3b_m$. The
comparator in~\eqref{eq:adaptive-main} therefore retains the joint
bias--variance expression
$D\min_e\{2\sum_m a_mh_m^e+2\sqrt2\sqrt{\sum_m3b_m/h_m^e}\}$, plus the
SGD, timing, and initialization terms of
Proposition~\ref{prop:refresh-sgd}: the pooled variance is not replaced
by a sum of clientwise errors. A Cartesian geometric grid approximates
the continuous minimum of this expression within a factor of two and
contains the full prefix $h_m=T$ for every client.

\paragraph{What the theorem does and does not give.}
It compares with the best \emph{fixed} joint memory vector over the
stated horizon, not with a sequence of locally best windows; the
validation split consumes observations, the $E$ expert solves can be
expensive, and keeping the union of their buffers may require the full
training history. It establishes statistical adaptation under a stronger
loss assumption, not an efficient storage policy.

\section{Experiments: when does forgetting help?}
\label{sec:refresh-exp}

Two networks with measured drift test Corollary~\ref{cor:refresh-opt}
from opposite sides. At climate stations the model of
Section~\ref{sec:refresh-model} holds by construction and the windows
are used exactly as prescribed; at clinics the drift is realistic, the
assumptions hold only approximately, and the solver is
Algorithm~\ref{alg:refresh}. All comparisons are paired. We write
$g=1-\Hg/(\bar\gamma^{1/3}\Xi^{2/3})$ for the relative gap in the
comparison $\Hg\le\bar\gamma^{1/3}\Xi^{2/3}$ of
Section~\ref{sec:refresh-model}, the gain that the upper bounds
predict.

\begin{figure}[t]
\centering
\includegraphics[width=0.85\linewidth]{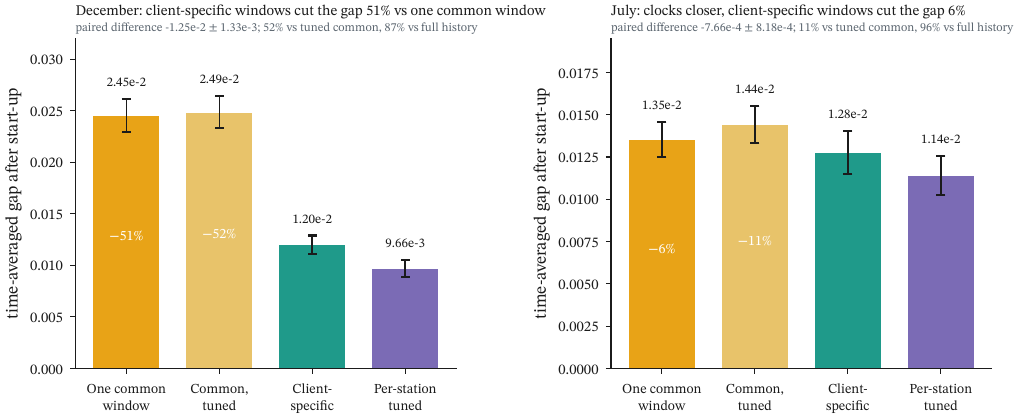}
\caption{Temperature stations: time-averaged gap over twelve evaluation
seeds with $95\%$ intervals (Appendix~\ref{app:station-model}). ``One
common window'' has the bound-optimal length; client-specific windows
follow Corollary~\ref{cor:refresh-opt} without tuning; per-station
tuning is an oracle. December has $g=0.32$, July $g=0.066$; subtitles
give paired differences.}
\label{fig:station-main}
\end{figure}

\paragraph{Temperature stations.}
Three stations of the U.S.\ Climate Reference Network
\citep{diamond2013uscrn}, chosen by insolation before any simulation,
report every five minutes: a desert, a maritime site, and a site in
polar night. A station's population path is its diurnal cycle fitted to
ten Decembers, and a draw adds a resampled day-to-day residual, so the
drift and noise bounds hold by construction. The loss is nearly linear,
and one round per refresh suffices ($K=1$, where
Algorithm~\ref{alg:refresh} and SGD coincide). The desert cycle is
fast, the maritime one slow, the polar path constant: $g=0.32$, and the
prescribed windows are $0.56$ and $1.28$ hours and the full history,
which no single length can offer. Used without a multiplier, they
reduce the gap by $51\%$ against the bound-optimal common window and by
$52\%$ against one tuned on separate seeds
(Figure~\ref{fig:station-main}, left). In July the cycles align and $g$ falls to $0.066$. Against the
bound-optimal common window the reduction is $6\%$, and the paired
interval $[-0.00158,0.00005]$ contains zero. Against the window tuned
on separate seeds the reduction is $11\%$, with interval
$[-0.0025,-0.0008]$ (right).
Across $200$ random station triplets the realized gain grows with $g$
(Spearman $0.73$), and for $g$ near zero a tuned common window can win
by a few percent: the comparison of upper bounds predicts both where
client-specific memory pays and where it does not
(Appendix~\ref{app:station-model}).

\paragraph{Recorded order.}
Replayed in recorded order, held-out Decembers leave the model:
persistent weather acts as drift, and the climatological inputs, which
keep the whole history at the polar site, lose in every year to a
common window tuned on the other nine. A drift input on the time scale
of weather, computed on the training folds, matches that window without
any search (Appendix~\ref{app:station-order}): what fails is the drift
input, not the window rule.

\paragraph{Clinics.}
Twelve clinics in six specialties share a logistic model with one
intercept per clinic that predicts an emergency or inpatient encounter
within 90 days from synthetic records \citep{walonoski2018synthea}. A
cohort the model does not observe, patients with recent acute care,
follows public influenza and COVID-19 surveillance curves, so the
optimum moves at speeds that differ by two orders of magnitude across
clinics, while the specialized clinics, which see the fewest patients,
are stationary. The drift and noise inputs are evaluated along the
exact optimal trajectory rather than certified, and the single scale of
every memory is tuned on separate streams, so this experiment compares
the shape of the memory, not its constant. Under exact minimization of
the buffered risk, client-specific windows reduce the gap by $33\%$
against the best common window, which is the full history, and by
$26\%$ against a common window with a per-clinic data floor, reaching
$89\%$ of the improvement of tuning all twelve windows. With
Algorithm~\ref{alg:refresh} ($K=8$) the optimization error dilutes
every difference: the reduction is $16\%$ against the best common
window, and the data-floored common window comes within $1\%$
(Appendix~\ref{app:clinics}). Where drift is seasonal and returns to
its mean, the worst-case prescription is conservative: per-clinic
tuning keeps the full history for pediatrics and primary care, where
Corollary~\ref{cor:refresh-opt} forgets.

\section{Conclusion and limitations}
\label{sec:concl}

Client-specific memory separates the statistical uncertainty of stable
clients from the forgetting that drifting clients require: $\Hg$ is
attainable with explicit finite-horizon costs, a lower construction with
unequal drift clocks retains each group's contribution, an exact periodic
calculation separates common from client-specific windows within a
specified estimator class, and independent validation competes with a
finite family of fixed memories without any drift bound as input.
Several limitations remain. Whether the $J^{1/6}$ factor between the
upper and lower bounds is necessary for unrestricted memory scales is
open; adaptive aggregation needs a bounded-loss assumption, spends samples
on validation, and may require many experts and full-history storage; the
experiments are the station laws, the clinic network, and the controlled
synthetic studies. Full history can be preferable under smooth, seasonal,
or shared drift. Temporal dependence, efficient
selection over large joint grids, and communication budgets that exclude
raw-data pooling call for further analysis; the source-age extension
assumes an unbiased oracle for the source time rather than inferring
censored labels from silence.

\subsection*{AI use statement}

Generative AI tools were used in preparing this manuscript: editing text and partial proof verification via Lean. The authors review the results and take full responsibility for them.

\subsection*{Ethics statement}

The experiments use public air-temperature records of the U.S.\ Climate
Reference Network, synthetic electronic health records generated with
Synthea, and aggregate public surveillance series (CDC ILINet and HHS
COVID-19 hospital admissions, retrieved through the Delphi Epidata
service \citep{farrow2015epidata}). No real patient-level data were
used, no new human-subject data were collected, and no intervention was
performed. The clinic experiment tests an optimization mechanism; it is
not a clinical prediction rule.

\subsection*{Reproducibility statement}

The model and the algorithm are specified in
Sections~\ref{sec:refresh-model} and~\ref{sec:refresh}; a table of
notation is in Appendix~\ref{app:notation}. Complete proofs appear in
Appendices~\ref{app:refresh-upper}--\ref{app:adaptive-memory}. The
supplement contains the experiment and verification scripts, the
scripts that retrieve the station records and the surveillance series,
disjoint tuning and evaluation seeds, seed-level outputs, and the
protocols of Appendices~\ref{app:clinics}
and~\ref{app:station-temperature}. Numerical bound diagnostics evaluate
every remainder and check the joint regime without simulating its long
arrival paths. Appendix~\ref{app:benchmarks} summarizes the benchmark
comparisons and the scope of the supplementary verification.

\section*{Acknowledgements}
The work of Yuriy Dorn was supported by the MSU Institute for Artificial Intelligence.
The work of I. Kuruzov and A. Gasnikov was supported by the Ministry of Science and Higher Education of the Russian Federation (Grant No. 075-15-2024-529).

\bibliography{refs}
\bibliographystyle{iclr2027_conference}

\clearpage
\appendix
\section{Notation}
\label{app:notation}

Tables~\ref{tab:notation-a} and~\ref{tab:notation-b} list the symbols
used in the main text and in the proofs, with the place where each is
introduced. Every symbol has one meaning throughout the paper;
experiment-specific constants (introduced in
experiment-specific constants (introduced in
Appendices~\ref{app:clinics} and~\ref{app:station-temperature}) use
letters that do not appear in the theory. We follow the usual conventions
of the optimization literature: $L$ is the Lipschitz constant of the
gradient, $\eta$ denotes stepsizes and learning rates, $\delta$ the
inexactness (bias) of a gradient oracle, $\varepsilon$ centered noise,
$\xstar$ a minimizer, and iterates carry the iteration index as a
superscript; no result uses strong convexity, so the strong-convexity
parameter $\mu$ and the condition number $\kappa$ do not appear. All
norms are Euclidean, $[n]=\{1,\ldots,n\}$, and $\ii$ denotes the
imaginary unit.

\begin{table}[!htbp]
\caption{Symbols of the model, the algorithm, and the upper bounds.}
\label{tab:notation-a}
\centering\footnotesize
\renewcommand{\arraystretch}{1.1}
\begin{tabular}{@{}p{0.26\linewidth}p{0.54\linewidth}p{0.13\linewidth}@{}}
\toprule
Symbol & Meaning & Where\\
\midrule
$M$, $m$, $w_m$ & number of clients, client index, public weights ($\sum_mw_m=1$) & Sec.~\ref{sec:refresh-model}\\
$\cX$, $d$, $D$, $\proj$ & compact convex domain in $\mathbb R^d$, its diameter, Euclidean projection & Sec.~\ref{sec:refresh-model}\\
$\ell(x;\xi)$, $\mathcal D_m^t$ & public sample loss; law of a sample of client $m$ generated at time $t$ & Sec.~\ref{sec:refresh-model}\\
$f_m^t$, $F^t$ & population objective of client $m$ and weighted objective at time $t$ & Sec.~\ref{sec:refresh-model}\\
$L$, $G$ & smoothness constant of $F^t$; public bound on $\|\nabla F^t\|$ & Sec.~\ref{sec:refresh-model}\\
$\lambda_m$, $\lambda_{\min}$, $N_m(t)$ & arrival rate, smallest rate, arrival count of client $m$ & Sec.~\ref{sec:refresh-model}\\
$\sigma_m^2$, $\gamma_m$, $\gamma$, $\bar\gamma$ & gradient-noise variance bound, clientwise drift bound, drift vector, $\sum_mw_m\gamma_m$ & Sec.~\ref{sec:refresh-model}\\
$\tau$, $T$, $\avgerr_T$ & synchronization time, horizon, time-averaged current-objective gap & Sec.~\ref{sec:refresh-model}\\
$a_m$, $b_m$, $\Sg$, $\Hg$, $\Xi$ & $w_m\gamma_m$, $w_m^2\sigma_m^2/\lambda_m$, $\sum_m\sqrt{a_mb_m}$, $\Sg^{2/3}$, $\Xi^2=\sum_mb_m$ & \eqref{eq:refresh-quantities}\\
\addlinespace
$K$, $h_m$, $h_m^\star$, $\Delta$ & rounds per solve, memory length of client $m$ and its ideal value, reporting stride & Sec.~\ref{sec:refresh-method}\\
$s_0$, $s_j$, $c$, $t_j$, $\Rint_j$ & first solve start, start of solve $j$, solve duration $K\tau$, report time $s_j+c$ ($\xinit$ is reported before $t_0$), report interval $[t_j,t_{j+1})$ & Sec.~\ref{sec:refresh-method}\\
$h_{m,j}$, $n_{m,j}$ & length $\min\{s_j,h_m\}$ and sample count of the frozen buffer of client $m$ at solve $j$ & Sec.~\ref{sec:refresh-method}\\
$\batch_{m,j,k}$, $n_{m,j,k}$, $t_{m,i}$ & $k$th batch of that buffer, its size, timestamp of sample $i$ & Alg.~\ref{alg:refresh}\\
$\xinit$, $x^k,v^k,y^k$, $\theta_k$, $g^k$ & reset point, accelerated iterates, momentum weight $2/(k+1)$, aggregated batch gradient & Alg.~\ref{alg:refresh}\\
$V_{j,k}$, $S_j$, $\eta_j$, $x_j$ & batch variance bound, $\sum_kk^2V_{j,k}$, stepsize, output of solve $j$ & Alg.~\ref{alg:refresh}\\
$\beta_0$, $\transient_T$ & noise constant $4\sqrt{2/3}$, finite-horizon transient & Thm.~\ref{thm:refresh-upper}\\
$z_j^k$, $\bar z_j$, $U_j$, $\eta_j^{\rm sgd}$, $\beta_{\rm sgd}$ & SGD iterates and output, $\sum_kV_{j,k}$, SGD stepsize, noise constant $2\sqrt2$ & Prop.~\ref{prop:refresh-sgd}\\
$\Filt_k$, $\delta_k$, $\bias$, $\varepsilon_k$, $V_k$ & filtration, gradient bias (oracle inexactness) and its bound, centered noise and its variance bound (Lemmas) & App.~\ref{app:refresh-upper}\\
$\bar V_j$, $\bias_j(t)$ & buffer-level variance proxy and bias bound of solve $j$ at time $t$ & App.~\ref{app:refresh-upper}\\
$\Delta^{\rm src}_m$ & source-age bound of client $m$ & App.~\ref{app:formal-statements}\\
$\Cg$ & count-floor and communication charge $4\sum_ma_m/\lambda_m+2\bar\gamma\tau$ & Cor.~\ref{cor:refresh-joint}\\
\bottomrule
\end{tabular}
\end{table}

\begin{table}[!htbp]
\caption{Symbols of the lower bounds, the selection procedure, and the periodic streams.}
\label{tab:notation-b}
\centering\footnotesize
\renewcommand{\arraystretch}{1.1}
\begin{tabular}{@{}p{0.26\linewidth}p{0.54\linewidth}p{0.13\linewidth}@{}}
\toprule
Symbol & Meaning & Where\\
\midrule
$\sched$, $\Pclass_T(\sched;\gamma)$, $\Aclass_T(\sched)$ & schedule, admissible instances, causal rules & Sec.~\ref{sec:refresh-lower}\\
$\cX_D$, $\Risk_T(\alg,\inst\mid\sched)$, $\Risk_T^\star(\gamma)$ & segment $[-D/2,D/2]\times\{0\}^{d-1}$, risk of rule $\alg$ on instance $\inst$, minimax risk & Sec.~\ref{sec:refresh-lower}\\
$\mathcal J_j$, $J$, $q_m$, $Q_j$ & client groups (dyadic bins), their number, memory scale $\sqrt{b_m/a_m}$, bin information rate & Cor.~\ref{cor:refresh-dyadic}\\
$T_j$, $A_m$, $C_j$, $\mathcal S_j$, $c_{\rm het}$ & block length and amplitudes of group $j$, $\sum_{m\in\mathcal J_j}w_mA_m$, bin mass $\sum_{m\in\mathcal J_j}\sqrt{a_mb_m}$, constant & Thm.~\ref{thm:refresh-lower}\\
$\Lambda$, $\Lambda_j(t)$, $\bsign_{j,k}$, $\bsign_j(t)$ & triangular profile, its block version, block signs & App.~\ref{app:refresh-lower}\\
$\zeta_j(t)$, $\Theta(t)$, $\Filt_t$, $\bar\bsign_j(t)$ & $C_j\Lambda_j(t)$, population coefficient, pooled transcript, posterior mean of the sign & App.~\ref{app:refresh-lower}\\
\addlinespace
$B_m$, $B$ & loss-oscillation bounds, $\sum_mw_mB_m$ & Sec.~\ref{sec:adaptive-memory}\\
$E$, $e$, $h^e$, $\grid$, $\grid_m$ & number of experts, expert index, its memory vector, joint and per-client grids & Sec.~\ref{sec:adaptive-memory}\\
$R$, $x_j^e$, $\pi_{j,e}$, $\etaew$, $\vscore_{j,e}$ & number of report intervals, expert output, mixture weights, learning rate, validation score & App.~\ref{app:adaptive-memory}\\
$\mathcal V_{m,j}$, $n^{\rm val}_{m,j}$, $\overline F_j$, $F_j^{\rm val}$ & validation samples of interval $j$, their count, centered interval-average and validation-time objectives & App.~\ref{app:adaptive-memory}\\
$\Risk_T^e$, $\Risk_T^{\rm mix}$, $\widetilde b_m$, $\widetilde\Xi$, $\widetilde{\transient}_T$ & expert and mixture risks, $3b_m$, and the corresponding pooled uncertainty and transient & Thm.~\ref{thm:adaptive-memory}\\
\addlinespace
$Y_{m,i}$, $\bar\vartheta_m$, $A_m$, $\omega_m$, $\Phi_m$, $\varepsilon_{m,i}$ & observation, mean level, amplitude, frequency, phase, Rademacher sign & \eqref{eq:periodic-stream}\\
$r$, $t_{\mathrm{s}}$, $\vartheta(t)$, $\widehat\vartheta_m(t)$ & moving-client rate, start of the data-based reports ($\max_m1/\lambda_m$), population and empirical coefficients & App.~\ref{app:periodic-memory}\\
$\mathcal M_m(t)$, $n_m(t)$, $n_m$, $\mathcal K_m(t)$ & retained indices, their count, window count $\lambda_mh_m$, complex coefficient of the average & App.~\ref{app:periodic-memory}\\
$\varphi_m$, $\mathcal K_n(\varphi)$, $\Upsilon_n(\varphi)$, $\sinc$ & phase increment $\omega_m/\lambda_m$, window filter, bias factor, $\sin(z)/z$ & Thm.~\ref{thm:periodic-risk}\\
$\Qrisk_T$, $\Qrisk_\infty$, $\Qrisk_\infty^{\rm sep}$ & phase-averaged risk, its limit, limit of the separate policy & App.~\ref{app:periodic-memory}\\
\bottomrule
\end{tabular}
\end{table}

\paragraph{Public records.}
Appendix~\ref{app:station-temperature} uses two public temperature
records. The {U.S.} Climate Reference Network five-minute
air-temperature product \citep{diamond2013uscrn} is the
\texttt{subhourly01} release,
\url{https://www.ncei.noaa.gov/pub/data/uscrn/products/subhourly01},
Decembers and Julys 2010--2019 at the named stations and, for the
network study, every station with sufficient December coverage. The
Beijing control uses hourly temperature (\texttt{TEMP}) from the
twelve-site archive
\citep{zhang2017beijing,chen2017beijing}; meteorological fields in
that archive are matched to the nearest China Meteorological
Administration station. Appendix~\ref{app:clinics} uses synthetic
visits generated with Synthea \citep{walonoski2018synthea}. Mix
weights follow public {CDC} {ILINet} and {HHS} {COVID}-19 hospital
series \citep{cdc-ilinet,hhs-covid-hosp} retrieved from Delphi Epidata
\citep{reinhart2021covidcast}. No real patient record is used.

\clearpage
\section{Supporting statements}
\label{app:formal-statements}

This appendix collects the statements that the main text refers to but
does not display: the raw-buffer implementation, the projected-SGD
counterpart of
Theorem~\ref{thm:refresh-upper}, the general unequal-clock lower bound
behind Corollary~\ref{cor:refresh-dyadic}, the explicit regime in which
the upper and lower bounds are comparable, and the source-time extension
of the upper bounds. Proofs are given in
Appendices~\ref{app:refresh-upper}--\ref{app:refresh-regime}.

\subsection{The raw-buffer implementation}
\label{app:raw-buffer}

Messages are unrestricted, so clients with finitely encodable samples may
upload their frozen buffers, timestamps, and counts in one
synchronization at the start of each solve. The server then runs the
same ordered batch updates of Algorithm~\ref{alg:refresh} locally and
either waits to reproduce the report times of the gradient-message
implementation or reports after $\tau$. The latter schedule uses
$c=\tau$ and $\Delta\ge\tau$ in the proofs, with unchanged sample-count
floors and optimization error, so Theorem~\ref{thm:refresh-upper} and
Corollary~\ref{cor:refresh-opt} hold for it with $K\tau$ replaced by
$\tau$. The elapsed time $K\tau$ therefore describes the
gradient-message implementation and is not intrinsic to the model.

\subsection{Projected SGD with the same memories}

Acceleration improves the optimization remainder of
Theorem~\ref{thm:refresh-upper} from order $K^{-1}$ to order $K^{-2}$;
it does not change the statistical-drift quantity. The following
comparison keeps the reset rule, the disjoint batches, and the reporting
times of Algorithm~\ref{alg:refresh} and replaces only the solver.

\begin{proposition}[Projected SGD with the same memories]
\label{prop:refresh-sgd}
Under the assumptions of Theorem~\ref{thm:refresh-upper}, let $V_{j,k}$
be the batch variance bounds of Algorithm~\ref{alg:refresh} and set
\[
 U_j=\sum_{k=1}^K V_{j,k},\qquad
 \eta_j^{\rm sgd}=\min\{1/(2L),\,D/\sqrt{2U_j}\},
\]
with $\eta_j^{\rm sgd}=1/(2L)$ when $U_j=0$. In solve $j$, reset
$z_j^0=\xinit$ and let $g_j^k$ be the weighted gradient of the same
batches $\batch_{m,j,k}$ evaluated at $z_j^{k-1}$. Define
\begin{equation}
\label{eq:refresh-sgd-update}
 z_j^k=\proj\bigl(z_j^{k-1}-\eta_j^{\rm sgd}g_j^k\bigr),\qquad
 \bar z_j=\frac1K\sum_{k=1}^K z_j^k .
\end{equation}
Report $\bar z_j$ on $\Rint_j=[s_j+K\tau,s_{j+1}+K\tau)$ and $\xinit$
before the first report. Then the pointwise and average guarantees
\eqref{eq:refresh-pointwise}--\eqref{eq:refresh-transient} hold for this
trajectory with $x_j$ replaced by $\bar z_j$ and with the substitutions
\begin{equation}
\label{eq:refresh-sgd-replacements}
 \frac{4LD^2}{K(K+1)}\ \longmapsto\ \frac{LD^2}{K},
 \qquad \beta_0\ \longmapsto\ \beta_{\rm sgd}:=2\sqrt2 .
\end{equation}
\end{proposition}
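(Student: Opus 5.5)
The plan follows the structure of the proof of Theorem~\ref{thm:refresh-upper} and replaces only the accelerated potential by the standard projected-SGD potential for an inexact stochastic oracle. Fix a solve $j$ and a time $t\in\Rint_j$. Let $\Filt_k$ be the $\sigma$-field generated by the first $k$ batches of solve $j$. Because $z_j^{k-1}$ is $\Filt_{k-1}$-measurable and batch $k$ is disjoint from the earlier batches of the same solve, the pointwise assumptions~\eqref{eq:refresh-noise} give
\[
 \E[g_j^k\mid\Filt_{k-1}]=\nabla\widetilde F_j(z_j^{k-1}),\qquad
 \E\bigl[\|g_j^k-\nabla\widetilde F_j(z_j^{k-1})\|^2\mid\Filt_{k-1}\bigr]\le V_{j,k},
\]
where $\widetilde F_j=\sum_m w_m n_{m,j,k}^{-1}\sum_{i\in\batch_{m,j,k}}f_m^{t_{m,i}}$ is the batch-averaged population objective. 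The reset rule matters here: no conditioning on earlier solves is needed. Write $g_j^k=\nabla F^t(z_j^{k-1})+\delta_k+\varepsilon_k$. The drift bound~\eqref{eq:refresh-drift} and the ages of the samples, at most $2h_{m,j}+c+\Delta$ from $t$, give the bias bound
\[
 \|\delta_k\|\le\bias_j(t):=2\sum_m a_m h_{m,j}+\bar\gamma(c+\Delta),
\]
exactly as in the accelerated proof. I would reuse that lemma verbatim.

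Next comes the one-step inequality. For any fixed $u\in\cX$, with $\eta=\eta_j^{\rm sgd}\le1/(2L)$, nonexpansiveness of $\proj$ together with $L$-smoothness and convexity of $F^t$ yields the usual bound
\[
 F^t(z^k)-F^t(u)\le\frac{\|z^{k-1}-u\|^2-\|z^k-u\|^2}{2\eta}+\inner{\delta_k+\varepsilon_k}{u-z^{k-1}}+\eta\|\varepsilon_k\|^2+\ldots
\]
This is the "post-update" form, in which the smoothness term $\frac L2\|z^k-z^{k-1}\|^2$ is absorbed by $\frac1{2\eta}-\frac L2\ge\frac1{4\eta}$, leaving an $\eta\|\varepsilon_k\|^2$ noise term after Young's inequality. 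Two facts then dispose of the remaining terms. First, $\E\inner{\varepsilon_k}{u-z^{k-1}}=0$, because $u=\xstar_t$ is deterministic and $z^{k-1}\in\Filt_{k-1}$. Second, the bias term is at most $D\bias_j(t)$. Summing over $k$, dividing by $K$, and applying Jensen's inequality to $\bar z_j$ gives
\[
 \E F^t(\bar z_j)-\min F^t\le\frac{D^2}{2\eta K}+\frac{\eta}{K}U_j+D\bias_j(t).
\]
The main obstacle is getting the constants to land exactly on $LD^2/K$ and $\beta_{\rm sgd}=2\sqrt2$. With $\eta=1/(2L)$ the first term is $LD^2/K$. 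When instead $\eta=D/\sqrt{2U_j}$, the two terms combine to at most $\sqrt2\cdot D\sqrt{U_j}/K\cdot\sqrt2$-type constants. I would track the Young split carefully, possibly using the factor $2\eta$ on the noise rather than $\eta$, so that the sum is $\le LD^2/K+2\sqrt2 D\sqrt{U_j}/(2K)$-form.

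It then remains to convert $U_j/K^2$ into a buffer-level quantity. Because batches have sizes within one of $n_{m,j}/K$, and $n_{m,j}\ge K$, each batch satisfies $|\batch_{m,j,k}|\ge n_{m,j}/(2K)$. Hence $U_j\le2K^2\sum_m w_m^2\sigma_m^2/n_{m,j}$. $\lambda$-regularity gives $n_{m,j}\ge\lambda_m h_{m,j}/2$, so $\sqrt{U_j}/K\le2\sqrt{\sum_m b_m/h_{m,j}}$, and the constants combine to $\beta_{\rm sgd}\sqrt{\sum_m b_m/h_{m,j}}$. The comparison with $\beta_0=4\sqrt{2/3}$ coming from the weights $k^2$ in $S_j$ serves as a sanity check.

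This proves the pointwise version of~\eqref{eq:refresh-pointwise}. The average bound~\eqref{eq:refresh-average} and the transient~\eqref{eq:refresh-transient} follow by integrating over $t$ exactly as in Theorem~\ref{thm:refresh-upper}, since that step uses only the pointwise bound. The early solves with $h_{m,j}=s_j<h_m$ produce the $\Xi(2/\sqrt T+\Delta/(T\sqrt{s_0}))$ term, and the interval before $t_0$ contributes $GD(s_0+c)/T$. Nothing in that step depends on the solver.
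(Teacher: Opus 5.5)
Your plan is essentially the paper's proof. It uses the same filtration exposing only the batches of the current reset solve, the same predictable bias $\bias_j(t)$ and conditionally centered noise with variance $V_{j,k}$, a projected-SGD potential over post-update iterates followed by Jensen, the count conversion $U_j\le2K^2\bar V_j\le4K^2\sum_m b_m/h_{m,j}$, and the unchanged time averaging. The paper also records that $\eta_j^{\rm sgd}$ depends only on batch sizes and is therefore deterministic given the schedule, which the potential argument needs. Two steps, as you wrote them, would not land on the stated constants.

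First, your one-step inequality pairs the bias with $u-z^{k-1}$. That leaves $\inner{\delta_k}{z^{k-1}-z^k}$ hidden in the ``$\ldots$''. Bounding it costs a second $\bias_j(t)D$, or an extra $\eta\,\bias_j(t)^2$ per step if you absorb it by Young's inequality together with the noise. Neither term is in the statement. The fix is to split only the noise at $z^{k-1}$ and bound the bias directly by $-\inner{\delta_k}{z^k-u}\le\bias_j(t)D$. This is valid because the projected point $z^k$ lies in $\cX$.

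Second, keep the noise coefficient $\eta\norm{\varepsilon_k}^2$ that your Young split already gives. A factor $2\eta$, or your intermediate guess $\sqrt2\cdot D\sqrt{U_j}/K\cdot\sqrt2$, leads to at least $2D\sqrt{U_j}/K$ and hence a constant $4$ in place of $2\sqrt2$. The computation is exact:
\begin{itemize}
\item If $\eta=D/\sqrt{2U_j}$, then $D^2/(2\eta)+\eta U_j=\sqrt2D\sqrt{U_j}$.
\item If the cap $\eta=1/(2L)$ is active, then $U_j\le2L^2D^2$, so $D^2/(2\eta)+\eta U_j=LD^2+U_j/(2L)\le LD^2+D\sqrt{U_j}/\sqrt2$. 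In this case you must also bound the second term; noting that the first equals $LD^2$ is not enough.
\end{itemize}
Either way you obtain $LD^2/K+\sqrt2D\sqrt{U_j}/K$, and $\sqrt{U_j}/K\le2\sqrt{\sum_m b_m/h_{m,j}}$ then gives $\beta_{\rm sgd}=2\sqrt2$.
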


Appendix~\ref{app:refresh-sgd} proves the proposition and optimizes its
windows: the statistical-drift value becomes $6\cdot2^{-1/3}D\Hg$, with
the same count-floor and timing charges as in
Corollary~\ref{cor:refresh-opt}. The average in
\eqref{eq:refresh-sgd-update} runs over post-update points, so every batch
affects the report. The first query $z_j^0=\xinit$ is fixed, so the batch
counts and the first gradient share one message, exactly as in
Algorithm~\ref{alg:refresh}.

\subsection{The general unequal-clock construction}

\begin{theorem}[Different drift clocks in one dimension]
\label{thm:refresh-lower}
Fix positive $(w_m,\lambda_m,\sigma_m)_{m}$, $D>0$, the drift vector
$\gamma$, and a $\lambda$-regular schedule. Choose disjoint nonempty
client groups $\mathcal J_1,\ldots,\mathcal J_J$, block lengths
$0<T_j\le T$, and amplitudes $A_m\ge0$ for the clients in these groups
such that
\[
 T_j\ge\frac{2}{\min_{m\in\mathcal J_j}\lambda_m},\qquad
 A_m\le\frac{\gamma_mT_j}{2}\ \ (m\in\mathcal J_j),\qquad
 C_j:=\sum_{m\in\mathcal J_j}w_mA_m,\qquad
 \sum_{j=1}^JC_j\le G,
\]
and
\[
 4T_j\sum_{m\in\mathcal J_j}\frac{\lambda_mA_m^2}{\sigma_m^2}
 \le\frac1{128}\qquad\text{for every }j .
\]
Then
\begin{equation}
\label{eq:refresh-multiscale}
 \Risk_T^\star(\gamma)
 \ge\frac{1/\sqrt3-1/4}{8}\,
    D\Bigl(\sum_{j=1}^J C_j^2\Bigr)^{1/2}.
\end{equation}
For every fixed rule and horizon, one deterministic admissible drift path
attains this bound.
\end{theorem}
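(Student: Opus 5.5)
We build a hard instance on the segment $\cX_D$ with a linear loss in the first coordinate, $\ell(x;\xi)=-\xi\,x_1$, extended so that the gradient bound $G$ and smoothness hold trivially; $L$-smoothness is automatic because $F^t$ is linear. Write $x=x_1\in[-D/2,D/2]$.

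\textbf{Construction.} For client $m\in\mathcal J_j$ the sample is $\xi=A_m\bsign_j(t)\Lambda_j(t)+\sigma_m\varepsilon$. Here $\varepsilon$ is a standard Gaussian, so the variance bound $\sigma_m^2$ holds exactly. The factor $\Lambda_j$ is a triangular tent of height one on each block of length $T_j$, and $\bsign_{j,k}$ are independent Rademacher signs, one per block, independent across groups.

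\textbf{Admissibility.} The slope of $A_m\Lambda_j$ is at most $2A_m/T_j\le\gamma_m$, so the drift condition~\eqref{eq:refresh-drift} holds. Since the tent vanishes at block endpoints, sign switches keep the path Lipschitz. The population objective is $F^t(x)=-\Theta(t)x$ with $\Theta(t)=\sum_j\bsign_j(t)C_j\Lambda_j(t)$, so $|\nabla F^t|\le\sum_jC_j\le G$.

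\textbf{Loss.} We have $\min F^t=-|\Theta(t)|D/2$, hence $F^t(x_t)-\min F^t=|\Theta(t)|D/2-\Theta(t)x_t\ge\frac D2\bigl(|\Theta|-\bigl|\E[\Theta\mid\Filt_t]\bigr|\bigr)$ in expectation. This holds because $x_t$ is $\Filt_t$-measurable (with the private seed independent) and $|x_t|\le D/2$. Writing $\E[\Theta\mid\Filt_t]=\sum_j\bar\bsign_j(t)\zeta_j(t)$, the learner's loss reduces to lower-bounding $\E|\Theta|-\E|\sum_j\bar\bsign_j\zeta_j|$.

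\textbf{Posterior control.} Work group by group. The information about $\bsign_{j,k}$ comes only from samples in the current block of group $j$, since other groups and earlier blocks are independent. Conditionally, the likelihood ratio between the two signs has KL/chi-square divergence at most $\sum_{m}\sum_{\text{samples}}(2A_m\Lambda)^2/(2\sigma_m^2)$. By regularity the count is at most $2\lambda_mT_j$, giving the bound $4T_j\sum_m\lambda_mA_m^2/\sigma_m^2\le1/128$. From this, a Pinsker/Bayes-risk argument gives $\E\,\bar\bsign_j(t)^2\le1/16$; more precisely, $\E[\bar\bsign_j^2]\le\E[e^{\chi^2}]-1$, a small constant.

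\textbf{Moment step.} The signs $\bsign_j$ are independent across groups, and so are the posteriors. Khintchine's inequality with its sharp $L^1$ constant gives $\E|\sum_j\bsign_j\zeta_j|\ge\frac1{\sqrt3}(\sum_j\zeta_j^2)^{1/2}$; this is the $1/\sqrt3$, obtained from a fourth-moment/Paley–Zygmund bound $\E|X|\ge(\E X^2)^{3/2}/(\E X^4)^{1/2}$ with $\E X^4\le3(\E X^2)^2$. Meanwhile $\E|\sum_j\bar\bsign_j\zeta_j|\le(\sum_j\zeta_j^2\E\bar\bsign_j^2)^{1/2}\le\frac14(\sum_j\zeta_j^2)^{1/2}$. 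Hence the instantaneous risk is at least $\frac D2(1/\sqrt3-1/4)(\sum_j\zeta_j(t)^2)^{1/2}$.

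\textbf{Time average.} It remains to show that $\frac1T\int_0^T(\sum_jC_j^2\Lambda_j(t)^2)^{1/2}dt\ge\frac14(\sum_jC_j^2)^{1/2}$. This is the vector integral inequality: Minkowski's integral inequality gives $\int\|v(t)\|dt\ge\|\int v(t)dt\|$ componentwise, and each $\frac1T\int_0^T\Lambda_j\ge1/4$ once $T\ge T_j$. Here $1/4$ bounds the average of a tent including the partial final block. Multiplying the constants, $\frac12\cdot\frac14$, gives $(1/\sqrt3-1/4)/8$.

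\textbf{Deterministic path.} The bound holds in expectation over the signs for every rule, so some realization of the signs, which is a deterministic admissible drift path, attains it.

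\textbf{Main obstacle.} The delicate step is the posterior second-moment bound: converting the information budget $1/128$ into $\E\bar\bsign_j^2\le1/16$ uniformly over time within a block, while conditioning on a transcript pooled across groups. I would first prove, via independence, that $\bar\bsign_j$ depends only on group-$j$ samples of the current block. I would then bound $\E\bar\bsign_j^2$ by the chi-square divergence of the Gaussian product, which equals $e^{\sum\mu^2/\sigma^2}-1\le e^{1/32}-1<1/16$.
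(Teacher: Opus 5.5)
Your proposal is correct and follows the paper's proof essentially step for step: the same triangular sign blocks with one clock per group, the Bayes reduction to $\E|\Theta(t)|-\E\bigl|\E[\Theta(t)\mid\Filt_t]\bigr|$, the block KL bound $1/128$ converted into $\E\bar\bsign_j(t)^2\le1/16$, the fourth-moment bound $1/\sqrt3$, and the vector-integral time average with $\frac1T\int_0^T\Lambda_j\ge1/4$. Two minor remarks. The paper reaches $1/16$ through Pinsker and the identity $\E|\bar\bsign|=\tv$; your $\chi^2$ route also works and gives a smaller constant. The value $1/\sqrt3$ is not the sharp $L^1$ Khintchine constant (that constant is $1/\sqrt2$), but the fourth-moment derivation you give is exactly the one the paper uses.
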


The proof (Appendix~\ref{app:refresh-lower}) runs independent triangular
sign blocks with a different clock for each group on the same
one-dimensional segment. Posterior uncertainty about the current signs
stays separate across groups; a second- and fourth-moment argument turns
it into expected linear optimization loss; and a vector integral
inequality preserves the loss over the horizon. A hard bump at a single
time would not justify the integrated statement.

\subsection{An explicit joint comparison regime}
\label{sec:refresh-regime}

The next statement lists the conditions under which every term of the
upper bound is of order $D\Hg$, so that the upper and lower bounds are
comparable. Write
\[
 \Cg=4\sum_m\frac{a_m}{\lambda_m}+2\bar\gamma\tau
\]
for the per-round count-floor and communication charge.

\begin{corollary}[Joint upper and lower regime]
\label{cor:refresh-joint}
Consider the model of Section~\ref{sec:refresh-model} on $\cX_D$ with
positive $(\lambda_m,\sigma_m)_m$, $\Hg>0$, and $\tau>0$. Fix a
$\lambda$-regular schedule and the dyadic partition that defines $T_j$
and $C_j$ in Corollary~\ref{cor:refresh-dyadic}, and suppose
\[
 T_j\ge\frac{2}{\min_{m\in\mathcal J_j}\lambda_m}\quad(j\in[J]),
 \qquad G\ge\sum_{j=1}^JC_j .
\]
Choose
\[
 K=\Bigl\lceil\max\{1,\,2\sqrt{LD/\Hg}\}\Bigr\rceil,
 \qquad \Delta=K\tau,\qquad s_0=2K/\lambda_{\min},
\]
suppose $K\Cg\le\Hg$, and, with $\beta_0=4\sqrt{2/3}$, assume
\begin{equation}
\label{eq:refresh-joint-horizon}
\begin{aligned}
 T&\ge\max\Bigl\{\max_jT_j,\ s_0+K\tau,\
                 \frac{16\beta_0^2\Xi^2}{\Hg^2}\Bigr\},\\
 T&\ge\frac{2\bigl[\beta_0\Xi K\tau/\sqrt{s_0}+G(s_0+K\tau)\bigr]}{\Hg}.
\end{aligned}
\end{equation}
With the windows of Corollary~\ref{cor:refresh-opt},
Algorithm~\ref{alg:refresh} satisfies $\avgerr_T\le[6(2/3)^{1/3}+3]D\Hg$
on every admissible instance, and
\begin{equation}
\label{eq:refresh-joint}
 c_{\rm het}\frac{D\Hg}{J^{1/6}}
 \le\Risk_T^\star(\gamma)
 \le[6(2/3)^{1/3}+3]\,D\Hg .
\end{equation}
\end{corollary}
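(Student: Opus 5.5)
The lower half of \eqref{eq:refresh-joint} needs no new work. The hypotheses $T_j\ge2/\min_{m\in\mathcal J_j}\lambda_m$, $G\ge\sum_jC_j$, and $T\ge\max_jT_j$ are exactly those of Corollary~\ref{cor:refresh-dyadic}, whose bound \eqref{eq:refresh-dyadic-lower} gives $\Risk_T^\star(\gamma)\ge c_{\rm het}D\Hg/J^{1/6}$. The positivity of $(\lambda_m,\sigma_m)$ and $\Hg>0$, so that some $\gamma_m>0$, supply that corollary's standing assumptions. The work is therefore the upper bound $\avgerr_T\le[6(2/3)^{1/3}+3]D\Hg$ for Algorithm~\ref{alg:refresh} with the windows of Corollary~\ref{cor:refresh-opt}. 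Since the algorithm belongs to $\Aclass_T(\sched)$ (it uses only arrived samples, and communication delay only restricts it), the bound also caps $\Risk_T^\star(\gamma)$.

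Start from \eqref{eq:refresh-optimized}, which applies because $T\ge s_0+K\tau=s_0+c$ and $\sigma_m>0$ for all clients. Its first term is $6(2/3)^{1/3}D\Hg$. It then suffices to show that the remaining terms add up to at most $3D\Hg$:
\begin{itemize}
\item \textbf{Optimization.} With $K\ge2\sqrt{LD/\Hg}$ we have $K(K+1)\ge K^2\ge4LD/\Hg$, so $4LD^2/(K(K+1))\le D\Hg$.
\item \textbf{Count floor and timing.} With $\Delta=K\tau$, the terms $4KD\sum_ma_m/\lambda_m+D\bar\gamma(K\tau+\Delta)$ equal $KD\,\Cg$. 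The assumption $K\Cg\le\Hg$ bounds them by $D\Hg$.
\item \textbf{Transient.} Write $\transient_T=2\beta_0D\Xi/\sqrt T+\bigl[\beta_0D\Xi\Delta/\sqrt{s_0}+GD(s_0+c)\bigr]/T$. The first hypothesis in \eqref{eq:refresh-joint-horizon}, $T\ge16\beta_0^2\Xi^2/\Hg^2$, gives $\sqrt T\ge4\beta_0\Xi/\Hg$, so the first piece is at most $D\Hg/2$. The second line of \eqref{eq:refresh-joint-horizon}, with $\Delta=K\tau$ and $c=K\tau$, bounds the bracketed piece by $D\Hg/2$. Hence $\transient_T\le D\Hg$.
\end{itemize}
Adding the three gives $3D\Hg$.

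The only delicate point is bookkeeping: checking that the floors $h_m=\max\{h_m^\star,2K/\lambda_m\}$ really produce the term $4KD\sum_ma_m/\lambda_m$ already accounted for in Corollary~\ref{cor:refresh-opt}, and that $s_0=2K/\lambda_{\min}$ matches the algorithm's start time. Everything else is direct substitution. The upper bound holds on every admissible instance because \eqref{eq:refresh-optimized} is uniform over instances.
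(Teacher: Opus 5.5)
Your proposal is correct and follows the paper's proof essentially step for step. You use the same three-way split of the remainders in \eqref{eq:refresh-optimized}: the optimization term is at most $D\Hg$ because $K^2\ge4LD/\Hg$, the count-floor plus timing charge equals $DK\Cg\le D\Hg$, and the two horizon lines of \eqref{eq:refresh-joint-horizon} give $\transient_T\le D\Hg$; then, as in the paper, you place Algorithm~\ref{alg:refresh} in the pooled causal class and read the lower half directly off Corollary~\ref{cor:refresh-dyadic}.
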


The condition $K\Cg\le\Hg$ controls the sample-count and communication
charges, and the horizon conditions control the startup and statistical
transients; neither follows from the lower-bound hypotheses alone.
Appendix~\ref{app:refresh-regime} gives a family with positive
communication time that satisfies all requirements for arbitrarily many
memory scales, and shows that on this family no retuning of the
disjoint-group construction removes the $J^{1/6}$ factor. The latter is a
limitation of the construction, not a proof that the factor is minimax
necessary.

\subsection{Source time and usable time}
\label{sec:refresh-delayed}

Suppose that a sample usable at time $t$ was generated at an exogenous
source time in $[\max\{0,t-\Delta^{\rm src}_m\},t]$, and that the independent
unbiased oracle assumptions~\eqref{eq:refresh-noise} hold relative to
the population at the source time. Then the bounds of
Theorem~\ref{thm:refresh-upper} and Corollary~\ref{cor:refresh-opt} gain
the additive term
\begin{equation}
\label{eq:refresh-source-age}
 D\sum_mw_m\gamma_m\Delta^{\rm src}_m .
\end{equation}
The proof (Appendix~\ref{app:refresh-upper}) adds the mean-gradient bias
$\gamma_m\Delta^{\rm src}_m$ at each client. Positive-only or censored labels would
require a separate unbiased correction and variance analysis; the
extension assumes such an oracle rather than deriving it.

\section{Proofs for client-specific refresh}
\label{app:refresh-upper}

The proof of Theorem~\ref{thm:refresh-upper} has two parts. We first
prove a projected accelerated potential for a stochastic oracle with
bounded gradient bias (Lemma~\ref{lem:refresh-accel}); as in the
inexact-oracle literature, $\delta$ denotes the inexactness of the
oracle, here a predictable bias of the batch gradient. We then apply the
lemma to one reset solve, with a filtration that exposes only the
disjoint batches of that solve; this is what allows data buffers to be
reused across solves. Corollary~\ref{cor:refresh-opt} follows from an elementary
window-functional lemma, and the section closes with the source-time
extension of Appendix~\ref{app:formal-statements}.

\subsection{Projected acceleration with bounded gradient bias}

\begin{lemma}[A biased stochastic oracle on a compact set]
\label{lem:refresh-accel}
Let $F$ be convex and $L$-smooth, $L>0$, on a compact convex set $\cX$
of diameter $D>0$. Start from $x^0=v^0=\xinit\in\cX$ and, for
$k=1,\ldots,K$, let
\begin{align*}
\theta_k&=\frac{2}{k+1},&
y^k&=(1-\theta_k)x^{k-1}+\theta_kv^{k-1},\\
v^k&=\proj(v^{k-1}-k\eta\,g^k),&
x^k&=(1-\theta_k)x^{k-1}+\theta_kv^k .
\end{align*}
Let $(\Filt_k)_{k\ge0}$ be a filtration such that $y^k,v^{k-1},x^{k-1}$
are $\Filt_{k-1}$-measurable, and suppose
\[
g^k=\nabla F(y^k)+\delta_k+\varepsilon_k,
\qquad \norm{\delta_k}\le\bias,
\]
where $\delta_k$ is $\Filt_{k-1}$-measurable and
\[
\E[\varepsilon_k\mid\Filt_{k-1}]=0,
\qquad
\E[\norm{\varepsilon_k}^2\mid\Filt_{k-1}]\le V_k
\]
for deterministic $V_k\ge0$. Then, for every $0<\eta\le1/(4L)$,
\begin{equation}
\label{eq:refresh-accel-general}
\E[F(x^K)-\min_\cX F]
\le\frac{2}{K(K+1)}
\Bigl(\frac{D^2}{2\eta}+\eta\sum_{k=1}^Kk^2V_k\Bigr)+\bias D .
\end{equation}
If $S=\sum_{k=1}^Kk^2V_k>0$, the choice $\eta=\min\{1/(4L),D/\sqrt{2S}\}$
gives
\begin{equation}
\label{eq:refresh-accel-tuned}
\E[F(x^K)-\min_\cX F]
\le\frac{4LD^2+2\sqrt2D\sqrt S}{K(K+1)}+\bias D ,
\end{equation}
and for $S=0$ the same bound holds with $\eta=1/(4L)$.
\end{lemma}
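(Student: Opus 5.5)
We follow the standard estimate-sequence (Tseng/Lan-type) analysis of projected accelerated gradient with the weights $\alpha_k:=k$, so that $\theta_k=2/(k+1)=\alpha_k/A_k$ with $A_k=\sum_{i\le k}\alpha_i=k(k+1)/2$. Fix $x^\star\in\argmin_\cX F$.

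\textbf{Step 1: one-step inequality.} By $L$-smoothness and $x^k-y^k=\theta_k(v^k-v^{k-1})$,
\[
F(x^k)\le F(y^k)+\inner{\nabla F(y^k)}{x^k-y^k}+\tfrac{L}{2}\theta_k^2\norm{v^k-v^{k-1}}^2 .
\]
Write $x^k=(1-\theta_k)x^{k-1}+\theta_kv^k$. Convexity gives $F(y^k)+\inner{\nabla F(y^k)}{x^{k-1}-y^k}\le F(x^{k-1})$. We then replace $\nabla F(y^k)$ by $g^k-\delta_k-\varepsilon_k$ in the term $\theta_k\inner{\nabla F(y^k)}{v^k-y^k}$.

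The projection step $v^k=\proj(v^{k-1}-k\eta g^k)$ gives the three-point inequality
\[
k\eta\inner{g^k}{v^k-x^\star}\le\tfrac12\norm{v^{k-1}-x^\star}^2-\tfrac12\norm{v^k-x^\star}^2-\tfrac12\norm{v^k-v^{k-1}}^2 .
\]
Combining, and using convexity once more for $F(y^k)+\inner{\nabla F(y^k)}{x^\star-y^k}\le F(x^\star)$, we obtain after multiplying by $A_k$:
\begin{align*}
A_k(F(x^k)-F^\star)\le{}&A_{k-1}(F(x^{k-1})-F^\star)+\tfrac1{2\eta}\bigl(\norm{v^{k-1}-x^\star}^2-\norm{v^k-x^\star}^2\bigr)\\
&-\Bigl(\tfrac1{2\eta}-\tfrac{L\alpha_k^2}{2A_k}\Bigr)\norm{v^k-v^{k-1}}^2-\alpha_k\inner{\delta_k+\varepsilon_k}{v^k-x^\star}.
\end{align*}
Since $\alpha_k^2/A_k=2k/(k+1)\le2$, the quadratic coefficient is at least $\tfrac1{2\eta}-L\ge\tfrac1{4\eta}$ when $\eta\le1/(4L)$.

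\textbf{Step 2: the error terms.} The bias term is bounded pathwise: $|\alpha_k\inner{\delta_k}{v^k-x^\star}|\le k\bias D$, because $v^k,x^\star\in\cX$.

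The noise term is split as $\inner{\varepsilon_k}{v^k-x^\star}=\inner{\varepsilon_k}{\hat v^k-x^\star}+\inner{\varepsilon_k}{v^k-\hat v^k}$, where $\hat v^k:=\proj(v^{k-1}-k\eta(\nabla F(y^k)+\delta_k))$ is $\Filt_{k-1}$-measurable.
\begin{itemize}
\item The first piece has zero conditional mean, because $\hat v^k$ is $\Filt_{k-1}$-measurable and $\E[\varepsilon_k\mid\Filt_{k-1}]=0$.
\item For the second, nonexpansiveness of $\proj$ gives $\norm{v^k-\hat v^k}\le k\eta\norm{\varepsilon_k}$, so its contribution is at most $\alpha_k k\eta\norm{\varepsilon_k}^2=\eta k^2\norm{\varepsilon_k}^2$, with conditional expectation at most $\eta k^2V_k$.
\end{itemize}
This per-step accounting loses a factor of $2$ relative to the bound~\eqref{eq:refresh-accel-general}. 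I would recover it by keeping the negative term $-\tfrac1{4\eta}\norm{v^k-v^{k-1}}^2$ and doing a Young-type split around $\hat v^k$. Alternatively, I would absorb the factor into the stepsize constant: the displayed coefficient $\eta\sum k^2V_k$ after dividing by $A_K$ gives exactly $2/(K(K+1))$. Getting the constants to match exactly is the delicate part of the proof. I expect this measurability and constant bookkeeping, rather than any conceptual step, to be the main obstacle.

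\textbf{Step 3: summation and tuning.} Telescoping from $k=1$ to $K$, with $A_0=0$ and $\norm{v^0-x^\star}\le D$, and taking expectations gives
\[
A_K\,\E[F(x^K)-F^\star]\le\frac{D^2}{2\eta}+\eta S+\bias D\sum_k k .
\]
Dividing by $A_K=K(K+1)/2$ yields~\eqref{eq:refresh-accel-general}; note that the bias sum $\sum_k k=A_K$ cancels to exactly $\bias D$.

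For~\eqref{eq:refresh-accel-tuned}, take $\eta=\min\{1/(4L),D/\sqrt{2S}\}$ and consider the two cases.
\begin{itemize}
\item If $\eta=D/\sqrt{2S}$, both terms equal $D\sqrt{S/2}$, giving $\sqrt2D\sqrt S$.
\item Otherwise $D/\sqrt{2S}\ge1/(4L)$, so $\eta S\le D^2/(2\eta)=2LD^2$.
\end{itemize}
In either case the sum is at most $2LD^2+\sqrt2D\sqrt S$. Multiplying by $2/(K(K+1))$ gives~\eqref{eq:refresh-accel-tuned}, and the case $S=0$ is immediate.
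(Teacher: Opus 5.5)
Your proof is correct and follows the paper's skeleton. You use the same one-step inequality from smoothness and the two supporting hyperplanes at $y^k$, and the same three-point projection inequality. You bound the bias pathwise using compactness at $v^k$, and you telescope with the same weights $A_k=k(k+1)/2$ (the paper's $1/\Gamma_k$).

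The only genuine difference is how you treat the noise term.
The paper splits $v^k-\xstar=(v^{k-1}-\xstar)+(v^k-v^{k-1})$. It removes the first piece by $\Filt_{k-1}$-measurability of $v^{k-1}$. It pays for the second by Young's inequality against half of the negative term in $\norm{v^k-v^{k-1}}^2$. That is where $\eta\le1/(4L)$ is needed, and predictability of $\delta_k$ is never used.
Your auxiliary point $\hat v^k=\proj(v^{k-1}-k\eta(\nabla F(y^k)+\delta_k))$, combined with nonexpansiveness of $\proj$, does use that $\delta_k$ is $\Filt_{k-1}$-measurable, which the lemma grants. In exchange, it leaves the whole quadratic term for the smoothness error, so your route would even work for $\eta\le1/(2L)$. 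Both routes charge $k^2\eta V_k$ per step after multiplication by $A_k$.

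Two corrections to the write-up.

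First, your remark that the per-step accounting ``loses a factor of $2$'' is mistaken. Your own bullet gives $\alpha_k\cdot k\eta\norm{\varepsilon_k}^2=k^2\eta\norm{\varepsilon_k}^2$, which is exactly the paper's $\theta_kk\eta/\Gamma_k=k^2\eta$, and your Step~3 already uses it correctly. Delete that paragraph. Absorbing a constant into the stepsize would not be legitimate anyway, since $\eta$ is fixed by the statement.

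Second, in the capped case of the tuning, the inequality you wrote, $\eta S\le D^2/(2\eta)=2LD^2$, only gives $D^2/(2\eta)+\eta S\le4LD^2$. That does not yield \eqref{eq:refresh-accel-tuned}. Instead, use $\eta\le D/\sqrt{2S}$ directly to get $\eta S\le D\sqrt{S/2}$. The sum is then at most $2LD^2+D\sqrt{S/2}\le2LD^2+\sqrt2D\sqrt S$; the paper reaches the same bound via $S\le8L^2D^2$.
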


\begin{proof}
Fix $\xstar\in\argmin_\cX F$ and write $r_k=\norm{v^k-\xstar}^2$.
Since $x^k-y^k=\theta_k(v^k-v^{k-1})$ and
$y^k=(1-\theta_k)x^{k-1}+\theta_kv^{k-1}$, smoothness together with the
two supporting-hyperplane inequalities at $y^k$ gives
\begin{align}
F(x^k)-F(\xstar)
&\le(1-\theta_k)[F(x^{k-1})-F(\xstar)]
+\theta_k\inner{\nabla F(y^k)}{v^k-\xstar}
+\frac{L\theta_k^2}{2}\norm{v^k-v^{k-1}}^2 .
\label{eq:refresh-accel-smooth}
\end{align}
The variational inequality of the projection that defines $v^k$ gives
\begin{equation}
\label{eq:refresh-accel-projection}
\inner{g^k}{v^k-\xstar}
\le\frac{r_{k-1}-r_k-\norm{v^k-v^{k-1}}^2}{2k\eta}.
\end{equation}
Substitute $\nabla F(y^k)=g^k-\delta_k-\varepsilon_k$ into
\eqref{eq:refresh-accel-smooth}. Since $v^k,\xstar\in\cX$, the bias
contributes $-\theta_k\inner{\delta_k}{v^k-\xstar}\le\theta_k\bias D$.
For the noise, write $v^k-\xstar=(v^{k-1}-\xstar)+(v^k-v^{k-1})$. The
stepsize restriction gives $L\theta_k\cdot k\eta\le2L\eta\le1/2$, so
after \eqref{eq:refresh-accel-projection} the coefficient of
$-\norm{v^k-v^{k-1}}^2$ is at least $\theta_k/(4k\eta)$, and Young's
inequality yields
\[
-\theta_k\inner{\varepsilon_k}{v^k-v^{k-1}}
\le\theta_k k\eta\norm{\varepsilon_k}^2
+\frac{\theta_k}{4k\eta}\norm{v^k-v^{k-1}}^2 .
\]
The squared-step terms cancel, and
$\E\inner{\varepsilon_k}{v^{k-1}-\xstar}=0$ by conditional centering.
Hence, with $\mathcal E_k=\E[F(x^k)-F(\xstar)]$,
\begin{equation}
\label{eq:refresh-accel-recursion}
\mathcal E_k\le(1-\theta_k)\mathcal E_{k-1}
+\frac{\theta_k}{2k\eta}\E[r_{k-1}-r_k]
+\theta_kk\eta V_k+\theta_k\bias D .
\end{equation}
Let $\Gamma_k=2/[k(k+1)]$. For $k\ge2$ we have
$(1-\theta_k)/\Gamma_k=1/\Gamma_{k-1}$, and for $k=1$ the coefficient of
$\mathcal E_0$ vanishes. Moreover,
\[
\frac{\theta_k}{2k\eta\,\Gamma_k}=\frac1{2\eta},
\qquad
\frac{\theta_kk\eta}{\Gamma_k}=k^2\eta,
\qquad
\frac{\theta_k}{\Gamma_k}=k .
\]
Dividing \eqref{eq:refresh-accel-recursion} by $\Gamma_k$, summing over
$k$, and using $r_0\le D^2$, $r_K\ge0$, and
$\Gamma_K\sum_{k=1}^Kk=1$ proves \eqref{eq:refresh-accel-general}.

If the tuned stepsize equals $D/\sqrt{2S}$, the two terms in parentheses
in \eqref{eq:refresh-accel-general} sum to $\sqrt2D\sqrt S$. If the cap
$1/(4L)$ is active, then $S\le8L^2D^2$ and
\[
\frac{D^2}{2\eta}+\eta S
=2LD^2+\frac S{4L}
\le2LD^2+\frac{D\sqrt S}{\sqrt2}.
\]
Both cases give \eqref{eq:refresh-accel-tuned}; the case $S=0$ follows
directly from \eqref{eq:refresh-accel-general}.
\end{proof}

The bias bound uses the compactness of $\cX$ at the projected auxiliary
point $v^k$, not only at the reported point $x^k$. An unprojected
acceleration recurrence therefore cannot be substituted without a
separate bound on its auxiliary iterates.

\subsection{Proof of Theorem~\ref{thm:refresh-upper}}

\begin{proof}
Condition on the complete exogenous arrival schedule. All clocks, buffer
choices, batch partitions, and variance-based stepsizes are then fixed
without revealing any sample value. Recall $s_j=s_0+j\Delta$, $c=K\tau$,
and $h_{m,j}=\min\{s_j,h_m\}$ with $\min\{s_j,\infty\}=s_j$.

\emph{Step 1: buffer ages and sample counts.}
If $s_j<h_m$, the frozen buffer of client $m$ is the prefix $(0,s_j]$, of
length $h_{m,j}=s_j$. If $h_m\le s_j$ and $h_m<\infty$, let
$p=\lfloor s_j/h_m\rfloor$; the frozen buffer is $((p-1)h_m,ph_m]$, of
length $h_{m,j}=h_m$, and since $ph_m\le s_j<(p+1)h_m$ every sample in
it has age less than $2h_m$ at the solve start. Infinite windows always
use the prefix. Hence every selected timestamp $t_{m,i}$ satisfies
\begin{equation}
\label{eq:refresh-age}
s_j-2h_{m,j}<t_{m,i}\le s_j .
\end{equation}
By $s_0=2K/\lambda_{\min}$ and $h_m\ge2K/\lambda_m$ we have
$h_{m,j}\ge2K/\lambda_m$, so the lower regularity inequality applies to
the frozen interval and gives
\begin{equation}
\label{eq:refresh-buffer-count}
n_{m,j}\ge\frac{\lambda_mh_{m,j}}2\ge K .
\end{equation}
Splitting the $n_{m,j}$ observations into $K$ batches whose sizes differ
by at most one yields batch sizes
\begin{equation}
\label{eq:refresh-split-count}
n_{m,j,k}\ge\Bigl\lfloor\frac{n_{m,j}}K\Bigr\rfloor
\ge\frac{n_{m,j}}{2K},
\end{equation}
where the last inequality uses $n_{m,j}/K\ge1$.

\emph{Step 2: the filtration of one solve.}
Fix a solve $j$. Its iterates reset to the public point $\xinit$, and
its updates use only the batches of this solve. Let $\Filt_{j,k}$ be
generated by the fixed schedule and the values of batches $1,\ldots,k$ of
this solve at all clients. Then $y^k$ and the earlier iterates are
$\Filt_{j,k-1}$-measurable, and conditional independence of the
time-stamped samples makes batch $k$ independent of the earlier batches
of the solve.

The same samples may have appeared in earlier solves. We do not condition
on those uses or on earlier reported outputs: the current solve is a
measurable function of its own frozen buffers, its partition, and the
fixed initialization, so this marginal filtration bounds its expected
risk even when buffers overlap across solves. Earlier outputs change
neither the marginal laws of the buffer samples nor the fact that each
batch is used once within this solve. Warm starts or sample-dependent
windows would require a different argument.

\emph{Step 3: the conditional oracle at a fixed report time.}
Fix $t\in\Rint_j=[s_j+c,s_{j+1}+c)$. The conditional mean of the
aggregate gradient at step $k$ is
\[
\E[g^k\mid\Filt_{j,k-1}]
=\sum_m\frac{w_m}{n_{m,j,k}}
\sum_{i\in\batch_{m,j,k}}\nabla f_m^{t_{m,i}}(y^k) .
\]
By the clientwise drift bounds and \eqref{eq:refresh-age},
\begin{align*}
\bigl\|\E[g^k\mid\Filt_{j,k-1}]-\nabla F^t(y^k)\bigr\|
&\le\sum_m\frac{w_m\gamma_m}{n_{m,j,k}}
\sum_{i\in\batch_{m,j,k}}(t-t_{m,i})\\
&\le 2\sum_m a_mh_{m,j}+\bar\gamma(t-s_j)
=:\bias_j(t).
\end{align*}
Conditionally on $\Filt_{j,k-1}$, the centered noises of different
samples and clients are independent, so their second moments add:
\[
\E\Bigl[\bigl\|g^k-\E[g^k\mid\Filt_{j,k-1}]\bigr\|^2
\Bigm|\Filt_{j,k-1}\Bigr]
\le V_{j,k}:=\sum_m\frac{w_m^2\sigma_m^2}{n_{m,j,k}} .
\]
Apply Lemma~\ref{lem:refresh-accel} with $F=F^t$ and $\bias=\bias_j(t)$.
With $S_j=\sum_{k=1}^Kk^2V_{j,k}$ and the solve output $x_j$, this gives
the exact-count bound
\begin{equation}
\label{eq:refresh-exact-proof}
\E[F^t(x_j)-\min_\cX F^t]
\le\frac{4LD^2+2\sqrt2D\sqrt{S_j}}{K(K+1)}
+D\Bigl(2\sum_m a_mh_{m,j}+\bar\gamma(t-s_j)\Bigr).
\end{equation}
The argument applies separately to every deterministic report time; it
yields the pointwise expectation needed below without any uniform
continuous-time high-probability statement.

\emph{Step 4: the heterogeneous count bound.}
Put $\bar V_j=\sum_mw_m^2\sigma_m^2/n_{m,j}$. From
\eqref{eq:refresh-split-count}, $V_{j,k}\le2K\bar V_j$, whence
\[
S_j\le2K\bar V_j\sum_{k=1}^Kk^2
\le\frac23\bar V_jK^2(K+1)^2 .
\]
Thus the statistical term in \eqref{eq:refresh-exact-proof} is at most
$4D\sqrt{\bar V_j}/\sqrt3$, and by \eqref{eq:refresh-buffer-count}
\[
\bar V_j\le2\sum_m\frac{b_m}{h_{m,j}} .
\]
Using $t-s_j<c+\Delta$ we arrive at
\begin{align}
\E[F^t(x_j)-\min_\cX F^t]
&\le\frac{4LD^2}{K(K+1)}
+\beta_0D\sqrt{\sum_m\frac{b_m}{h_{m,j}}}
+2D\sum_m a_mh_{m,j}
+D\bar\gamma(c+\Delta),
\label{eq:refresh-r-proof}
\end{align}
with $\beta_0=4\sqrt{2/3}$; this is \eqref{eq:refresh-pointwise}.

\emph{Step 5: time averaging, including prefixes and stationary clients.}
For $h_m=\infty$ we use the conventions $1/h_m=0$ and $a_mh_m=0$, the
latter only when $\gamma_m=0$. Then
\[
\frac1{h_{m,j}}=\frac1{\min\{s_j,h_m\}}
\le\frac1{s_j}+\frac1{h_m},
\qquad
\sum_m a_mh_{m,j}\le\sum_m a_mh_m,
\]
and therefore, with $\Xi^2=\sum_mb_m$,
\[
\sqrt{\sum_m\frac{b_m}{h_{m,j}}}
\le\sqrt{\sum_m\frac{b_m}{h_m}}+\frac{\Xi}{\sqrt{s_j}} .
\]
In particular, a moving client's prefix count is never replaced by
$\lambda_mh_m/2$ before its first complete window exists.

When $T\ge s_0+c$, the report intervals that meet $[0,T]$ have
consecutive indices $0,1,\ldots,j_T$. Since $s\mapsto s^{-1/2}$ is
decreasing,
\begin{align*}
\sum_{j=0}^{j_T}\frac{|\Rint_j\cap[0,T]|}{\sqrt{s_j}}
&\le\frac{\Delta}{\sqrt{s_0}}
+\sum_{j=1}^{j_T}\frac{\Delta}{\sqrt{s_0+j\Delta}}
\le\frac{\Delta}{\sqrt{s_0}}
+\int_{s_0}^{s_{j_T}}s^{-1/2}\,ds
\le\frac{\Delta}{\sqrt{s_0}}+2\sqrt T ,
\end{align*}
with the usual convention that an empty sum is zero. Before the first
report, the uniform gradient bound gives
$F^t(\xinit)-\min_\cX F^t\le GD$. Integrating \eqref{eq:refresh-r-proof}
over $[0,T]$, applying the preceding inequalities, and dividing by $T$
proves
\begin{align*}
\avgerr_T
\le{}&\frac{4LD^2}{K(K+1)}
+\beta_0D\sqrt{\sum_m\frac{b_m}{h_m}}
+2D\sum_m a_mh_m+D\bar\gamma(c+\Delta)\\
&+\beta_0D\,\Xi\Bigl(\frac2{\sqrt T}
+\frac{\Delta}{T\sqrt{s_0}}\Bigr)
+\frac{GD(s_0+c)}T ,
\end{align*}
which is \eqref{eq:refresh-average}--\eqref{eq:refresh-transient}.
All expectations were conditional on the fixed schedule; the same
inequalities hold unconditionally if the regularity assumption holds
almost surely. The clocks and snapshots use arrived data only, and the
$K$ rounds cost $c=K\tau\le\Delta$, so the solves and the reporting
trajectory are causal.
\end{proof}

\subsection{Proof of Corollary~\ref{cor:refresh-opt}}

\begin{lemma}[The window functional]
\label{lem:window-functional}
Let $\alpha,\beta>0$ and suppose $\Sg>0$. Over all $h_m>0$ indexed by
the clients with $a_mb_m>0$,
\[
\inf_h\Bigl\{\alpha\sum_m a_mh_m
+\beta\sqrt{\sum_m b_m/h_m}\Bigr\}
=3\cdot2^{-2/3}\alpha^{1/3}\beta^{2/3}\,\Hg ,
\]
attained at $h_m=\varrho_\star\sqrt{b_m/a_m}$ with
$\varrho_\star=(\beta/2\alpha)^{2/3}\Sg^{-1/3}$.
\end{lemma}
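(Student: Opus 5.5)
Our plan is a two-stage optimization: first the direction of the vector $h$, then its scale. Write $h_m=\varrho\,u_m$ with $\varrho>0$. The first sum is linear in $\varrho$ and the square-root term scales like $\varrho^{-1/2}$, so the objective equals $\alpha\varrho P(u)+\beta\varrho^{-1/2}\sqrt{Q(u)}$ with $P(u)=\sum_ma_mu_m$ and $Q(u)=\sum_mb_m/u_m$. Minimizing over $\varrho$ for fixed $u$ is a one-variable problem. Setting the derivative to zero gives $\varrho^{3/2}=\beta\sqrt{Q}/(2\alpha P)$, and substituting back yields the value $3\cdot2^{-2/3}\alpha^{1/3}\beta^{2/3}(P(u)Q(u))^{1/3}$. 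This is a minimum because the function is convex in $\varrho$ and tends to $+\infty$ at both ends.

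It remains to minimize $P(u)Q(u)=\bigl(\sum_ma_mu_m\bigr)\bigl(\sum_mb_m/u_m\bigr)$ over positive $u$. By Cauchy--Schwarz applied to the vectors $(\sqrt{a_mu_m})_m$ and $(\sqrt{b_m/u_m})_m$, this product is at least $\bigl(\sum_m\sqrt{a_mb_m}\bigr)^2=\Sg^2$. Equality holds exactly when $a_mu_m$ is proportional to $b_m/u_m$, that is, when $u_m\propto\sqrt{b_m/a_m}$; this is possible because $a_mb_m>0$ for every indexed client. Hence the infimum is $3\cdot2^{-2/3}\alpha^{1/3}\beta^{2/3}\Sg^{2/3}=3\cdot2^{-2/3}\alpha^{1/3}\beta^{2/3}\Hg$, and it is attained.

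To identify $\varrho_\star$, take $u_m=\sqrt{b_m/a_m}$. Then $P=\sum_m\sqrt{a_mb_m}=\Sg$ and $Q=\sum_m\sqrt{a_mb_m}=\Sg$. The stationarity condition becomes $\varrho^{3/2}=\beta\Sg^{1/2}/(2\alpha\Sg)=(\beta/2\alpha)\Sg^{-1/2}$, so $\varrho_\star=(\beta/2\alpha)^{2/3}\Sg^{-1/3}$, as claimed. As a check, the optimal value is then $\alpha\varrho_\star\Sg+\beta\varrho_\star^{-1/2}\Sg^{1/2}$, and both terms are multiples of $\alpha^{1/3}\beta^{2/3}\Sg^{2/3}$ with coefficients $2^{-2/3}$ and $2^{1/3}$. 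Their sum is $3\cdot2^{-2/3}$, which confirms the constant.

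Nothing here is deep. The only point needing care is justifying that the two-stage infimum equals the joint infimum. This holds because every positive vector $h$ can be written as $\varrho u$, for example with $\varrho=1$. It also matters that the restriction to clients with $a_mb_m>0$ ensures both $P$ and $Q$ are positive and finite, so the scalar minimization is nondegenerate.
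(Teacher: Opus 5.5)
Your proof is correct and uses the same two ingredients as the paper. Those are Cauchy--Schwarz in the form $\bigl(\sum_m a_mh_m\bigr)\bigl(\sum_m b_m/h_m\bigr)\ge\Sg^2$, with equality exactly at $h_m\propto\sqrt{b_m/a_m}$, and a one-dimensional minimization of $\alpha\varrho\Sg+\beta\sqrt{\Sg/\varrho}$ over the common scale. The only difference is the order: the paper first reduces to proportional windows and then tunes $\varrho$, whereas you tune $\varrho$ first and observe that the value $3\cdot2^{-2/3}\alpha^{1/3}\beta^{2/3}(PQ)^{1/3}$ depends on the direction only through $PQ$, which justifies the reduction slightly more explicitly than the paper's ``for a fixed value of the second sum'' remark.
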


\begin{proof}
By the Cauchy--Schwarz inequality,
\[
\Bigl(\sum_m a_mh_m\Bigr)
\Bigl(\sum_m\frac{b_m}{h_m}\Bigr)
\ge\Bigl(\sum_m\sqrt{a_mb_m}\Bigr)^2
=\Sg^2 ,
\]
with equality when $h_m=\varrho\sqrt{b_m/a_m}$ for a common $\varrho>0$.
For a fixed value of the second sum, the first sum is therefore at least
the value attained by these proportional windows, so the infimum is
attained on them. Along the proportional windows the objective equals
$\alpha\varrho\Sg+\beta\sqrt{\Sg/\varrho}$, whose minimizer and minimum
are
\[
\varrho_\star=\Bigl(\frac{\beta}{2\alpha}\Bigr)^{2/3}\Sg^{-1/3},
\qquad
3\cdot2^{-2/3}\alpha^{1/3}\beta^{2/3}\Sg^{2/3}. \qedhere
\]
\end{proof}

\begin{proof}[Proof of Corollary~\ref{cor:refresh-opt}]
First suppose $\Sg>0$ and consider the clients with $a_mb_m>0$. The
coefficients in \eqref{eq:refresh-average} are $\alpha=2$ and
$\beta=\beta_0=4\sqrt{2/3}$, for which
$\varrho_\star=(2/3)^{1/3}\Sg^{-1/3}$. Lemma~\ref{lem:window-functional}
therefore gives the ideal windows
\[
h_m^\star=(2/3)^{1/3}\Sg^{-1/3}\sqrt{b_m/a_m},\qquad a_mb_m>0,
\]
with statistical-drift value $6(2/3)^{1/3}\Hg$.

To meet the finite-arrival constraint, set
$h_m=\max\{h_m^\star,2K/\lambda_m\}$ on these clients. Increasing a
window cannot increase its contribution to $\sum_mb_m/h_m$, and the
increase in the weighted drift cost is at most
\[
2\sum_m a_m(h_m-h_m^\star)
\le4K\sum_m\frac{a_m}{\lambda_m} .
\]
A client with $a_m>0=b_m$ receives $h_m=2K/\lambda_m$; its entire drift
cost is contained in the same bound. A client with $a_m=0$ receives
$h_m=\infty$; it has no limiting drift or variance contribution, while
its finite-prefix variance remains in the transient of
Theorem~\ref{thm:refresh-upper}. Restoring the factor $D$ and the
remaining terms of the theorem proves \eqref{eq:refresh-optimized}.

If $\Sg=0$, every client with positive drift has zero noise. Use
$h_m=2K/\lambda_m$ for those clients and $h_m=\infty$ for the zero-drift
clients. Then $\sum_mb_m/h_m=0$ and
$2\sum_ma_mh_m=4K\sum_ma_m/\lambda_m$, so the same bound holds with
$\Hg=0$, without interpreting a singular window formula.

Finally, inequality~\eqref{eq:refresh-comparison} is the Cauchy--Schwarz
bound
\[
\Sg=\sum_m\sqrt{a_mb_m}
\le\sqrt{\sum_ma_m}\sqrt{\sum_mb_m}
=\sqrt{\bar\gamma}\,\Xi ,
\]
raised to the power $2/3$; equality holds exactly when $(a_m)_m$ and
$(b_m)_m$ are proportional.
\end{proof}

\subsection{Separate source and usable timestamps}
\label{app:source-time}

For the extension of Appendix~\ref{app:formal-statements}
(Section~\ref{sec:refresh-delayed}), condition on the joint exogenous
array of usable and source timestamps. The buffer construction and the
count bounds use the usable times $t_{m,i}$. If the source time satisfies
$\max\{0,t_{m,i}-\Delta^{\rm src}_m\}\le t^{\rm src}_{m,i}\le t_{m,i}$, then every
selected sample obeys
\[
t-t^{\rm src}_{m,i}
=(t-t_{m,i})+(t_{m,i}-t^{\rm src}_{m,i})
\le t-s_j+2h_{m,j}+\Delta^{\rm src}_m .
\]
Its conditional mean gradient is $\nabla f_m^{t^{\rm src}_{m,i}}(y^k)$ by
the source-time oracle assumption, so the bias in Step~3 of the proof of
Theorem~\ref{thm:refresh-upper} is at most
$\bias_j(t)+\sum_m a_m\Delta^{\rm src}_m$. The conditional noise variance and all
usable-time counts are unchanged. Lemma~\ref{lem:refresh-accel}
consequently adds exactly $D\sum_ma_m\Delta^{\rm src}_m$ to each pointwise bound,
and at most that amount to the average bound. Since the source-age term
does not depend on the buffer lengths, the same addition applies to
Corollary~\ref{cor:refresh-opt}. The independence used here is
conditional on both timestamp arrays; the calculation does not supply an
unbiased oracle for censored or outcome-dependent label selection.

\section{Projected SGD with client-specific memories}
\label{app:refresh-sgd}

We first prove a projected-SGD potential for the bounded-bias oracle and
then apply the same marginal filtration, age bound, and count estimates
as in the accelerated analysis. This isolates the effect of the optimizer
from the clientwise memory allocation.

\begin{lemma}[A projected-SGD potential with bounded bias]
\label{lem:refresh-sgd-bias}
Let $F$ be convex and $L$-smooth, $L>0$, on a compact convex set $\cX$
of diameter $D>0$. Fix $z^0=\xinit\in\cX$, an integer $K\ge1$, and a
deterministic $0<\eta\le1/(2L)$. For $k=1,\ldots,K$ let
\[
 z^k=\proj(z^{k-1}-\eta g^k),\qquad
 g^k=\nabla F(z^{k-1})+\delta_k+\varepsilon_k,
\]
where $z^{k-1}$ and $\delta_k$ are $\Filt_{k-1}$-measurable,
$\|\delta_k\|\le\bias$, and
\[
 \E[\varepsilon_k\mid\Filt_{k-1}]=0,
 \qquad
 \E[\|\varepsilon_k\|^2\mid\Filt_{k-1}]\le V_k
\]
for finite deterministic $V_k\ge0$. Then, for
$\bar z=K^{-1}\sum_{k=1}^Kz^k$,
\begin{equation}
\label{eq:refresh-sgd-general}
 \E[F(\bar z)-\min_\cX F]
 \le\frac{D^2}{2\eta K}+\frac\eta K\sum_{k=1}^K V_k+\bias D .
\end{equation}
Write $U=\sum_{k=1}^K V_k$. With $\eta=\min\{1/(2L),D/\sqrt{2U}\}$ if
$U>0$, and $\eta=1/(2L)$ if $U=0$,
\begin{equation}
\label{eq:refresh-sgd-tuned}
 \E[F(\bar z)-\min_\cX F]
 \le\frac{LD^2}{K}+\frac{\sqrt2D\sqrt U}{K}+\bias D .
\end{equation}
\end{lemma}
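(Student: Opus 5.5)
The plan is the standard one-step projected-gradient inequality with the noise handled at the new point, then telescoping over $k$ and convexity for the average $\bar z$. Fix $\xstar\in\argmin_\cX F$ and write $r_k=\norm{z^k-\xstar}^2$.

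\emph{Per-step inequality.} Smoothness gives
\[
F(z^k)\le F(z^{k-1})+\inner{\nabla F(z^{k-1})}{z^k-z^{k-1}}+\tfrac L2\norm{z^k-z^{k-1}}^2,
\]
and convexity gives
\[
F(z^{k-1})+\inner{\nabla F(z^{k-1})}{\xstar-z^{k-1}}\le F(\xstar).
\]
Adding these,
\[
F(z^k)-F(\xstar)\le\inner{\nabla F(z^{k-1})}{z^k-\xstar}+\tfrac L2\norm{z^k-z^{k-1}}^2 .
\]
The variational inequality of the projection defining $z^k$ gives
\[
\inner{g^k}{z^k-\xstar}\le\bigl(r_{k-1}-r_k-\norm{z^k-z^{k-1}}^2\bigr)/(2\eta).
\]
Substitute $\nabla F(z^{k-1})=g^k-\delta_k-\varepsilon_k$. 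The bias term is bounded by $\bias D$, since $z^k,\xstar\in\cX$. For the noise, split
\[
z^k-\xstar=(z^{k-1}-\xstar)+(z^k-z^{k-1}).
\]
The first piece has zero conditional mean, because $z^{k-1}$ is $\Filt_{k-1}$-measurable. The second is handled by Young's inequality:
\[
-\inner{\varepsilon_k}{z^k-z^{k-1}}\le\eta\norm{\varepsilon_k}^2+\tfrac1{4\eta}\norm{z^k-z^{k-1}}^2 .
\]
With $\eta\le1/(2L)$ the squared-step coefficient is $L/2+1/(4\eta)-1/(2\eta)\le0$, so these terms can be dropped. Taking expectations yields
\[
\E[F(z^k)-F(\xstar)]\le\frac{\E[r_{k-1}-r_k]}{2\eta}+\eta V_k+\bias D .
\]

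\emph{Telescoping and averaging.} Sum over $k=1,\ldots,K$ and use $r_0\le D^2$ and $r_K\ge0$. Dividing by $K$ and applying Jensen's inequality, since $F$ is convex and $\bar z$ averages the post-update points, gives \eqref{eq:refresh-sgd-general}.

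\emph{Tuning.} The tuning follows the end of the proof of Lemma~\ref{lem:refresh-accel}. If $\eta=D/\sqrt{2U}$, the two terms equal $\sqrt2D\sqrt U/K$. If the cap $1/(2L)$ is active, then $U\le2L^2D^2$, so
\[
LD^2/K+U/(2LK)\le LD^2/K+D\sqrt U/(\sqrt2K).
\]
The case $U=0$ is immediate.

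\emph{Main obstacle.} The only delicate point is the noise term. It must be paired with $z^{k-1}$ rather than $z^k$ to get exact centering. The leftover cross term with the step must then be absorbed by the projection's $-\norm{z^k-z^{k-1}}^2/(2\eta)$ term, after smoothness has used part of it. This is exactly where the restriction $\eta\le1/(2L)$ is needed, rather than $1/L$.
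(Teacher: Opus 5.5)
Your proposal is correct and follows essentially the same route as the paper's proof. It combines smoothness with the supporting hyperplane at $z^{k-1}$ and the projection variational inequality, centers the noise against $z^{k-1}$, absorbs the step cross term by Young's inequality using $1/(2\eta)-L/2\ge1/(4\eta)$, then telescopes, applies Jensen to the post-update average, and uses the same two-case stepsize tuning with matching constants.
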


\begin{proof}
Fix $\xstar\in\argmin_\cX F$, write $r_k=\|z^k-\xstar\|^2$, and combine smoothness at $z^{k-1}$ with the
supporting-hyperplane inequality for $F(\xstar)$:
\[
 F(z^k)-F(\xstar)
 \le\inner{\nabla F(z^{k-1})}{z^k-\xstar}
       +\frac L2\|z^k-z^{k-1}\|^2 .
\]
The projection variational inequality gives
\[
 \inner{g^k}{z^k-\xstar}
 \le\frac{r_{k-1}-r_k-\|z^k-z^{k-1}\|^2}{2\eta}.
\]
After substituting $\nabla F(z^{k-1})=g^k-\delta_k-\varepsilon_k$, the
bias term is bounded by $-\inner{\delta_k}{z^k-\xstar}\le\bias D$,
because both $z^k$ and $\xstar$ lie in $\cX$. Split the noise term as
\[
 -\inner{\varepsilon_k}{z^k-\xstar}
 =-\inner{\varepsilon_k}{z^{k-1}-\xstar}
  -\inner{\varepsilon_k}{z^k-z^{k-1}}.
\]
Since $\eta\le1/(2L)$, we have $1/(2\eta)-L/2\ge1/(4\eta)$, and Young's
inequality gives
\[
 -\inner{\varepsilon_k}{z^k-z^{k-1}}
 \le\eta\|\varepsilon_k\|^2+\frac1{4\eta}\|z^k-z^{k-1}\|^2 ,
\]
so the squared-step terms have a nonpositive sum. Conditional centering
removes $\E\inner{\varepsilon_k}{z^{k-1}-\xstar}$. Therefore
\begin{equation}
\label{eq:refresh-sgd-recursion}
 \E[F(z^k)-F(\xstar)]
 \le\frac{\E r_{k-1}-\E r_k}{2\eta}
       +\eta V_k+\bias D .
\end{equation}
Summing over $k$, using $r_0\le D^2$ and $r_K\ge0$, and applying
convexity to the average of the post-update points proves
\eqref{eq:refresh-sgd-general}; the accumulated bias term remains
$\bias D$ after division by $K$.

If the chosen stepsize is $D/\sqrt{2U}$, then
$D^2/(2\eta)+\eta U=\sqrt2D\sqrt U$. If the cap $1/(2L)$ is active, then
$U\le2L^2D^2$ and
\[
 \frac{D^2}{2\eta}+\eta U
 =LD^2+\frac U{2L}
 \le LD^2+\frac{D\sqrt U}{\sqrt2}.
\]
Both cases imply \eqref{eq:refresh-sgd-tuned}. When $U=0$, the cap in
\eqref{eq:refresh-sgd-general} gives $LD^2/K+\bias D$ directly.
\end{proof}

\subsection{Proof of Proposition~\ref{prop:refresh-sgd}}

\begin{proof}
Condition on the fixed exogenous regular schedule. The frozen windows,
timestamp-only partitions, counts, $V_{j,k}$, and $\eta_j^{\rm sgd}$ are
then deterministic. In solve $j$, reveal only its batches $1,\ldots,k$ at
step $k$, starting from the fixed $z_j^0=\xinit$. Then $z_j^{k-1}$ is
measurable with respect to the earlier batches, and the next batch is
independent of them. As in the accelerated proof, this is a marginal
analysis of one reset solve; earlier uses of the same observations in
other solves are not conditioned upon.

Fix a deterministic $t\in\Rint_j=[s_j+c,s_{j+1}+c)$, where $c=K\tau$.
The pointwise oracle assumptions, independence within the current batch,
and the age bound \eqref{eq:refresh-age} give
\begin{align*}
 \bigl\|\E[g_j^k\mid\Filt_{j,k-1}]-\nabla F^t(z_j^{k-1})\bigr\|
 &\le 2\sum_m a_mh_{m,j}+\bar\gamma(t-s_j)=:\bias_j(t),\\
 \E\Bigl[\bigl\|g_j^k-\E[g_j^k\mid\Filt_{j,k-1}]\bigr\|^2
       \Bigm|\Filt_{j,k-1}\Bigr]
 &\le V_{j,k}.
\end{align*}
The mean-gradient discrepancy is predictable and bounded by the
deterministic $\bias_j(t)$, so Lemma~\ref{lem:refresh-sgd-bias} applies
with $F=F^t$. Let $\bar V_j=\sum_mw_m^2\sigma_m^2/n_{m,j}$. The
balanced-batch and regular-count bounds \eqref{eq:refresh-split-count}
and \eqref{eq:refresh-buffer-count} imply
\[
 V_{j,k}\le2K\bar V_j,\qquad
 U_j\le2K^2\bar V_j\le4K^2\sum_m\frac{b_m}{h_{m,j}} .
\]
Thus, with $\beta_{\rm sgd}=2\sqrt2$ and $t-s_j<c+\Delta$,
\begin{align}
 \E[F^t(\bar z_j)-\min_\cX F^t]
 &\le\frac{LD^2}{K}
  +\beta_{\rm sgd}D\sqrt{\sum_m\frac{b_m}{h_{m,j}}}
  +2D\sum_m a_mh_{m,j}+D\bar\gamma(c+\Delta),
 \label{eq:refresh-sgd-pointwise}
\end{align}
which is the asserted pointwise replacement, including the zero-variance
case.

For the time average, the finite and infinite window conventions give,
as before,
\[
 \sum_m a_mh_{m,j}\le\sum_m a_mh_m,\qquad
 \sqrt{\sum_m\frac{b_m}{h_{m,j}}}
 \le\sqrt{\sum_m b_m/h_m}+\frac{\Xi}{\sqrt{s_j}} ,
\]
and the estimate from the proof of Theorem~\ref{thm:refresh-upper} gives
$\sum_j|\Rint_j\cap[0,T]|/\sqrt{s_j}\le\Delta/\sqrt{s_0}+2\sqrt T$.
During the initial interval of length $s_0+c$, the report is $\xinit$
and its gap is at most $GD$. Integrating \eqref{eq:refresh-sgd-pointwise},
for every $T\ge s_0+c$ we obtain
\begin{align}
 \avgerr_T
 &\le\frac{LD^2}{K}
  +D\Bigl(2\sum_m a_mh_m
       +\beta_{\rm sgd}\sqrt{\sum_m b_m/h_m}\Bigr)
  +D\bar\gamma(c+\Delta)+\transient_T^{\rm sgd},
 \label{eq:refresh-sgd-average}\\
 \transient_T^{\rm sgd}
 &:=\beta_{\rm sgd}D\,\Xi
      \Bigl(\frac2{\sqrt T}+\frac\Delta{T\sqrt{s_0}}\Bigr)
     +\frac{GD(s_0+c)}T .
 \label{eq:refresh-sgd-transient}
\end{align}
These are precisely the replacements
\eqref{eq:refresh-sgd-replacements} in
\eqref{eq:refresh-average}--\eqref{eq:refresh-transient}. Infinite windows
are used only when $a_m=0$; their stationary noise is retained in
$\transient_T^{\rm sgd}$.

Finally, the first query $z_j^0=\xinit$ does not depend on the stepsize,
so clients can return their batch-size vectors with its gradient, and the
server computes $U_j$ and $\eta_j^{\rm sgd}$ before the first update. The
implementation therefore uses the same $K$ gradient-message rounds and
report times as Algorithm~\ref{alg:refresh}, and consults no sample that
arrives after $s_j$.
\end{proof}

\subsection{Optimizing the SGD windows}

Under the positivity conditions of Corollary~\ref{cor:refresh-opt}, set
\begin{equation}
\label{eq:refresh-sgd-windows}
 h_m^{\star,\rm sgd}=2^{-1/3}\Sg^{-1/3}\sqrt{b_m/a_m},\qquad
 h_m=\max\{h_m^{\star,\rm sgd},2K/\lambda_m\}
\end{equation}
for $a_m>0$, and $h_m=\infty$ for $a_m=0$. Then
\begin{align}
 \avgerr_T
 \le{}&6\cdot2^{-1/3}D\Hg+\frac{LD^2}{K}
  +4KD\sum_m\frac{a_m}{\lambda_m}
 +D\bar\gamma(K\tau+\Delta)+\transient_T^{\rm sgd}.
 \label{eq:refresh-sgd-optimized}
\end{align}
Indeed, Lemma~\ref{lem:window-functional} with $\alpha=2$ and
$\beta=\beta_{\rm sgd}=2\sqrt2$ gives $\varrho_\star=2^{-1/3}\Sg^{-1/3}$
and the value $6\cdot2^{-1/3}\Hg$ for the statistical-drift expression
in \eqref{eq:refresh-sgd-average}. Raising an ideal window to its count
floor cannot increase the variance term and raises the weighted drift
cost by at most $4K\sum_m a_m/\lambda_m$, which proves
\eqref{eq:refresh-sgd-optimized}. As in the accelerated bound, a client
with $a_m>0=b_m$ receives $h_m=2K/\lambda_m$, and when $\Sg=0$ the
moving clients receive these finite windows and the stationary clients
infinite ones; the same bound then holds with $\Hg=0$.

The statistical-drift dependence therefore comes from the memory balance
and the independent-batch oracle alone. For $\Hg>0$, the optimization
term is at most $D\Hg$ under the sufficient integer choices
\[
 K_{\rm sgd}=\Bigl\lceil\max\{1,LD/\Hg\}\Bigr\rceil,
 \qquad
 K_{\rm acc}=\Bigl\lceil\max\{1,2\sqrt{LD/\Hg}\}\Bigr\rceil ,
\]
which for $LD/\Hg\ge1$ scale linearly and as a square root,
respectively. In both cases the sample-count, timing, and transient
terms must be controlled as well.

\section{Lower bounds under fixed clientwise drift}
\label{app:refresh-lower}

We prove Theorem~\ref{thm:refresh-lower} on the segment
$\cX_D=[-D/2,D/2]\times\{0\}^{d-1}$ of Section~\ref{sec:refresh-lower}.
The vectors indexed by client groups below organize the proof; they do
not enlarge the optimization domain, which uses one coordinate only. We
restrict the instance class to the public loss $\ell(x;\xi)=\xi x_1$.
This subclass is contained in the minimax supremum, and knowing its loss
map reveals nothing about its time-indexed sample laws.

Fix the exogenous schedule $\sched$, the horizon $T$, and one (possibly
randomized) causal rule $\alg\in\Aclass_T(\sched)$ before introducing
any prior. Every probability and divergence in this appendix is
conditional on the fixed schedule. A random sign array serves only to
average risks over admissible instances: each deterministic realization
of the array, not the mixture over arrays, is a member of the
independent-stream class $\Pclass_T(\sched;\gamma)$.

\subsection{Proof of Theorem~\ref{thm:refresh-lower}}

\paragraph{The instance and its drift bounds.}
Let
\[
 \Lambda(u)=1-|2u-1|,\qquad u\in[0,1],
\]
be the triangular profile: it is nonnegative, vanishes at the endpoints,
has Lipschitz constant $2$, and satisfies $\int_0^1\Lambda(u)\,du=1/2$.
For every group $j\in[J]$ and block $k=0,1,\ldots$, draw an independent
uniform sign $\bsign_{j,k}\in\{-1,+1\}$, and for $t\in[kT_j,(k+1)T_j)$
put
\[
 \Lambda_j(t)=\Lambda\bigl((t-kT_j)/T_j\bigr),\qquad
 \bsign_j(t)=\bsign_{j,k}.
\]
A client $m\in\mathcal J_j$ has sample law and loss
\[
 \xi\sim\mathcal N\bigl(A_m\bsign_j(t)\Lambda_j(t),\sigma_m^2\bigr)
 \quad\text{at time }t,
 \qquad \ell(x;\xi)=\xi x_1 ,
\]
and a client outside the selected groups has law
$\mathcal N(0,\sigma_m^2)$ at every time. For each fixed sign array, all
samples are independent across arrivals and clients. Their laws depend
on their timestamps but not on the query points, so repeated or adaptive
queries reveal no more than the raw arrived sample, which the pooled
transcript already contains.

For $m\in\mathcal J_j$ the population gradient is
$A_m\bsign_j(t)\Lambda_j(t)e_1$. The path is continuous across block
boundaries and piecewise linear with slopes of magnitude at most
$2A_m/T_j\le\gamma_m$, so it satisfies the clientwise Lipschitz
bound~\eqref{eq:refresh-drift} for all pairs of times, including pairs in
different blocks. Unselected clients have zero drift. All objectives are
linear, hence convex and $L$-smooth for every $L\ge0$, and the gradient
noise variance is exactly $\sigma_m^2$.

Set
\[
 \zeta_j(t)=C_j\Lambda_j(t),\qquad
 \zeta(t)=(\zeta_j(t))_{j=1}^J,\qquad
 \Theta(t)=\sum_{j=1}^J \zeta_j(t)\bsign_j(t).
\]
The population objective is $F^t(x)=\Theta(t)x_1$, and
\[
 \sup_{t,x}\norm{\nabla F^t(x)}
 =\sup_t|\Theta(t)|
 \le\sum_{j=1}^J C_j\le G .
\]
Thus every deterministic sign path meets all constraints of the class.
Each client's drift bound is respected separately; no public aggregate
budget is redistributed.

\paragraph{Information about a group's current sign.}
Fix a time $t$ and a group $j$. Conditionally on either value of
$\bsign_j(t)$, the samples of that group in its current block form a
product of Gaussians. The block has length $T_j\ge2/\lambda_m$ for every
$m\in\mathcal J_j$, so two-sided regularity bounds its count at client
$m$ by $2\lambda_mT_j$. Endpoint observations, if any, have mean zero and
carry no information about either adjacent sign. The KL divergence
between the two laws of a full block is bounded by
\begin{align}
 \label{eq:refresh-lb-block-kl}
 \kl\bigl(\Prob_{j,+}^{\rm block}\,\big\|\,\Prob_{j,-}^{\rm block}\bigr)
 &\le\sum_{m\in\mathcal J_j}\sum_{s\text{ in block}}
       \frac{2A_m^2\Lambda_j(s)^2}{\sigma_m^2}
 \le4T_j\sum_{m\in\mathcal J_j}
       \frac{\lambda_mA_m^2}{\sigma_m^2}
 \le\frac1{128}.
\end{align}
The law of the samples arrived by $t$ is a marginal of the full-block
law, so it obeys the same bound, and Pinsker's inequality gives
\begin{equation}
 \label{eq:refresh-lb-sign-tv}
 \tv\bigl(\Prob_{j,+}^t,\Prob_{j,-}^t\bigr)\le\frac1{16},
\end{equation}
where $\Prob_{j,\pm}^t$ are the laws of the group's current-block samples
arrived by $t$ under its two signs.

Let $\Filt_t$ be the pooled transcript: all samples arrived by $t$
together with the rule's private seed. Define the posterior mean
\[
 \bar\bsign_j(t)=\E[\bsign_j(t)\mid\Filt_t].
\]
Samples from earlier blocks of group $j$ are independent of its current
sign under the prior, and all raw streams of the other groups and the
private seed are independent of both that sign and its current-block
samples. Hence $\bar\bsign_j(t)$ is the posterior mean given the
current-block samples of group $j$ alone; the variables $\bar\bsign_j(t)$
are independent across groups and satisfy
$\E\bar\bsign_j(t)=\E\bsign_j(t)=0$.

For a uniform binary prior with posterior mean $\bar\bsign$, the identity
\begin{equation}
 \label{eq:refresh-lb-posterior-tv}
 \E|\bar\bsign|=\tv(\Prob_+,\Prob_-)
\end{equation}
follows from the mixture density: if $\rho_+,\rho_-$ are densities with
respect to a common dominating measure, then
$\bar\bsign=(\rho_+-\rho_-)/(\rho_++\rho_-)$ while the mixture density is
$(\rho_++\rho_-)/2$. Since $|\bar\bsign_j(t)|\le1$,
\eqref{eq:refresh-lb-sign-tv}--\eqref{eq:refresh-lb-posterior-tv} imply
\begin{equation}
 \label{eq:refresh-lb-posterior-second}
 \E\bar\bsign_j(t)^2\le\E|\bar\bsign_j(t)|\le\frac1{16}.
\end{equation}
These statements use the product structure of the raw group streams; they
do not assume that an algorithm's messages from different groups remain
independent after adaptive processing.

\paragraph{Posterior uncertainty implies optimization error.}
The minimum of $\Theta(t)x_1$ on $\cX_D$ is $-D|\Theta(t)|/2$. For any
causal report $x_t\in\cX_D$, conditioning its first coordinate on the
pooled transcript gives
\begin{align}
 \label{eq:refresh-lb-bayes-gap}
 \E\bigl[F^t(x_t)-\min_{\cX_D}F^t\bigr]
 &=\E[\Theta(t)(x_t)_1]+\frac D2\E|\Theta(t)|
 \ge\frac D2\Bigl(
       \E|\Theta(t)|-
       \E\bigl|\E[\Theta(t)\mid\Filt_t]\bigr|
       \Bigr).
\end{align}
This is the Bayes-optimal decision bound for a linear objective; it
already allows arbitrary randomized causal rules.

At a fixed time the current signs are independent Rademacher variables,
so
\[
 \E\Theta(t)^2=\|\zeta(t)\|^2,\qquad
 \E\Theta(t)^4
 =3\|\zeta(t)\|^4-2\sum_j\zeta_j(t)^4\le3\|\zeta(t)\|^4 .
\]
H\"older's inequality in the form
$(\E Z^2)^{3/2}\le\E|Z|\,(\E Z^4)^{1/2}$ therefore yields
\begin{equation}
 \label{eq:refresh-lb-prior-magnitude}
 \E|\Theta(t)|\ge\frac1{\sqrt3}\|\zeta(t)\| ;
\end{equation}
this is immediate when $\zeta(t)=0$, and otherwise follows by dividing
by the positive fourth-moment bound. On the other hand,
$\E[\Theta(t)\mid\Filt_t]=\sum_j\zeta_j(t)\bar\bsign_j(t)$, and the zero
means, independence across groups, and
\eqref{eq:refresh-lb-posterior-second} imply
\begin{align}
 \label{eq:refresh-lb-posterior-magnitude}
 \E\Bigl|\sum_j\zeta_j(t)\bar\bsign_j(t)\Bigr|
 &\le\Bigl(\sum_j\zeta_j(t)^2\,\E\bar\bsign_j(t)^2\Bigr)^{1/2}
 \le\frac14\|\zeta(t)\| .
\end{align}
Combining \eqref{eq:refresh-lb-bayes-gap}--\eqref{eq:refresh-lb-posterior-magnitude} proves the pointwise Bayes
bound
\begin{equation}
 \label{eq:refresh-lb-pointwise-bayes}
 \E\bigl[F^t(x_t)-\min_{\cX_D}F^t\bigr]
 \ge\frac D2\Bigl(\frac1{\sqrt3}-\frac14\Bigr)\|\zeta(t)\| .
\end{equation}

\paragraph{Different clocks and the horizon average.}
Because $T\ge T_j$, complete blocks of group $j$ cover at least half of
the horizon. The triangular profile has mean $1/2$ on a complete block
and is nonnegative on the remaining partial block, so
\[
 \frac1T\int_0^T\Lambda_j(t)\,dt
 \ge\frac{\lfloor T/T_j\rfloor T_j}{2T}\ge\frac14 .
\]
The triangle inequality for the vector integral of
$\zeta(t)=(C_j\Lambda_j(t))_j$ gives
\begin{align}
 \label{eq:refresh-lb-time-integral}
 \frac1T\int_0^T\|\zeta(t)\|\,dt
 &\ge\Bigl(\sum_{j=1}^J C_j^2
       \Bigl[\frac1T\int_0^T\Lambda_j(t)\,dt\Bigr]^2\Bigr)^{1/2}
 \ge\frac14\Bigl(\sum_{j=1}^J C_j^2\Bigr)^{1/2}.
\end{align}
This is the step that keeps the different group clocks inside one
time-averaged hard instance, instead of replacing the time average by a
different hard instance at every evaluation time.

Integrating \eqref{eq:refresh-lb-pointwise-bayes} and using
\eqref{eq:refresh-lb-time-integral} shows that the sign-averaged risk of
the fixed rule $\alg$ is at least
\[
 \frac{1/\sqrt3-1/4}{8}\,
 D\Bigl(\sum_{j=1}^J C_j^2\Bigr)^{1/2}.
\]
Only finitely many group-block signs affect $[0,T]$, so at least one
deterministic realization of them attains the average risk. Extend that
realization arbitrarily on later blocks, keeping the triangular profiles;
the extension preserves the drift and gradient bounds and is not revealed
to the rule. Because the rule and the horizon were fixed before the sign
averaging, this proves
\[
 \forall\alg\in\Aclass_T(\sched)\quad
 \exists\inst\in\Pclass_T(\sched;\gamma)
\]
with the asserted risk, and hence Theorem~\ref{thm:refresh-lower}. The
selected path may depend on the rule and the horizon; no single path is
claimed to be worst for all rules or horizons.\hfill$\qedsymbol$

\subsection{Proof of Corollary~\ref{cor:refresh-dyadic}}

Consider the clients with positive prescribed drift and variance. Since
$q_m=\sqrt{b_m/a_m}$, elementary substitution gives
\[
 a_m=\frac{\sqrt{a_mb_m}}{q_m},\qquad
 b_m=\sqrt{a_mb_m}\,q_m,\qquad
 \frac{\lambda_m\gamma_m^2}{\sigma_m^2}
 =\frac{a_m^2}{b_m}=\frac{\sqrt{a_mb_m}}{q_m^3}.
\]
In each bin set $A_m=\gamma_mT_j/2$, as in the corollary. The drift
constraint is then saturated, and
\[
 4T_j\sum_{m\in\mathcal J_j}
       \frac{\lambda_mA_m^2}{\sigma_m^2}
 =T_j^3Q_j=\frac1{128}.
\]
The remaining hypotheses of Theorem~\ref{thm:refresh-lower} are exactly
the count, horizon, and $G$ conditions of the corollary.

Let $\underline q_j$ be the lower endpoint of the $j$th dyadic bin, so
that $\underline q_j\le q_m\le2\underline q_j$ for all
$m\in\mathcal J_j$. Then
\begin{align}
 \label{eq:refresh-lb-bin-amplitude}
 C_j
 &=\frac1{2\cdot128^{1/3}}
    \frac{\sum_{m\in\mathcal J_j}\sqrt{a_mb_m}/q_m}
         {\bigl(\sum_{m\in\mathcal J_j}\sqrt{a_mb_m}/q_m^3\bigr)^{1/3}}
 \ge\frac1{2\cdot128^{1/3}}
    \frac{\mathcal S_j/(2\underline q_j)}
         {(\mathcal S_j/\underline q_j^3)^{1/3}}
 =\frac{\mathcal S_j^{2/3}}{4\cdot128^{1/3}}.
\end{align}
Inserting this into Theorem~\ref{thm:refresh-lower} gives the
group-sensitive bound
\[
 \Risk_T^\star(\gamma)
 \ge c_{\rm het}D
       \Bigl(\sum_{j=1}^J\mathcal S_j^{4/3}\Bigr)^{1/2},
 \qquad
 c_{\rm het}=\frac{1/\sqrt3-1/4}{32\cdot128^{1/3}},
\]
which is \eqref{eq:refresh-group-lower}. H\"older's inequality gives
\[
 \sum_j \mathcal S_j
 \le J^{1/4}
      \Bigl(\sum_j\mathcal S_j^{4/3}\Bigr)^{3/4},
\]
and raising to the power $2/3$ and rearranging yields
\[
 \Bigl(\sum_j\mathcal S_j^{4/3}\Bigr)^{1/2}
 \ge J^{-1/6}\Bigl(\sum_j\mathcal S_j\Bigr)^{2/3}
 =J^{-1/6}\Hg ,
\]
which is \eqref{eq:refresh-dyadic-lower}. Dyadic bins starting at
$q_{\min}$ need at most $1+\lceil\log_2(q_{\max}/q_{\min})\rceil$
nonempty bins, and empty bins may be omitted. The argument proves a
comparison with this bin factor; it does not show that the factor is
necessary.\hfill$\qedsymbol$

\subsection{A finite-horizon statistical term}

The fixed-vector class contains stationary objectives even when some
public drift bounds are positive. The next bound retains the statistical
cost of every client on a finite horizon, including clients with
$\gamma_m=0$.

\begin{proposition}[Stationary contribution at a finite horizon]
\label{prop:refresh-stationary}
Fix $D>0$, $T>0$, positive $(w_m,\lambda_m,\sigma_m)_m$, nonnegative
$(\gamma_m)_m$, and a $\lambda$-regular schedule on $[0,\infty)$. For
the fixed-vector class and the pooled causal rules of
Section~\ref{sec:refresh-lower},
\[
 \Risk_T^\star(\gamma)
 \ge\frac D4\min\Bigl\{G,\
       \frac1{\sqrt8}
       \Bigl(\sum_m\frac{w_m^2\sigma_m^2}{\lambda_mT+2}\Bigr)^{1/2}
       \Bigr\}.
\]
For every fixed rule, one of two stationary linear instances attains the
bound.
\end{proposition}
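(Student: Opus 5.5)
We will use a two-point stationary construction on the segment $\cX_D$, reusing the Bayes machinery from the proof of Theorem~\ref{thm:refresh-lower}. Fix the rule $\alg$ and the horizon $T$. We take the public loss $\ell(x;\xi)=\xi x_1$ and a single uniform sign $\bsign\in\{-1,+1\}$ held fixed for all time. Every client $m$ draws $\xi\sim\mathcal N(\bsign\mu_m,\sigma_m^2)$ at every time, with constant means $\mu_m\ge0$ chosen below. These laws do not change, so every drift bound $\gamma_m\ge0$ holds, including $\gamma_m=0$. The objective is linear, hence convex and $L$-smooth, and the noise variance is exactly $\sigma_m^2$. The population objective is $F^t(x)=\Theta x_1$ with $\Theta=\bsign\sum_mw_m\mu_m=:\bsign C$, so the gradient bound requires $C\le G$.

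By \eqref{eq:refresh-lb-bayes-gap}, the pointwise Bayes risk at time $t$ is at least $\frac D2 C\,(1-\E|\bar\bsign(t)|)$, where $\bar\bsign(t)$ is the posterior mean given the pooled transcript. By \eqref{eq:refresh-lb-posterior-tv}, $\E|\bar\bsign(t)|$ equals the total variation distance between the two laws of the samples arrived by $t$. The private seed is independent of $\bsign$, so it adds nothing. Because arrivals only accumulate, this distance is largest at $t=T$, and it is enough to bound it there. Regularity gives at most $\lambda_mT+2$ arrivals for client $m$ on $[0,T]$; for $T<2/\lambda_m$ this follows by enlarging the interval to length $2/\lambda_m$. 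The KL divergence is therefore at most $\sum_m 2\mu_m^2(\lambda_mT+2)/\sigma_m^2$. We require this to be at most $1/2$, so that Pinsker gives $\tv\le1/2$. Then every $t\le T$ has Bayes gap at least $DC/4$, and so does the time average.

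It remains to maximize $C=\sum_mw_m\mu_m$ subject to $\sum_m\mu_m^2(\lambda_mT+2)/\sigma_m^2\le1/4$ and $C\le G$. By Cauchy--Schwarz, the optimum of the first problem is
\[
\mu_m\propto w_m\sigma_m^2/(\lambda_mT+2),\qquad
C^\star=\tfrac12\Bigl(\sum_m\frac{w_m^2\sigma_m^2}{\lambda_mT+2}\Bigr)^{1/2}.
\]
If $C^\star>G$, we scale all the $\mu_m$ down so that $C=G$; the KL constraint still holds. This gives $C=\min\{G,C^\star\}$ and hence a risk of at least $\frac D4\min\{G,C^\star\}$, which is better than the stated constant $1/\sqrt8$. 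If we want to match the statement's constant, we can instead use the weaker Pinsker slack. Finally, the sign-averaged risk is at least the bound, so one of the two deterministic stationary instances $\bsign=\pm1$ attains it for the fixed rule.

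The main obstacle is the careful count of arrivals for short horizons, where regularity only applies to intervals of length at least $2/\lambda_m$; this is exactly where the $+2$ comes from. A secondary point is checking that the sign-averaging argument is legitimate: the rule is fixed before the sign is drawn, and each realization is admissible on its own.
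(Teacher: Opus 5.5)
Your construction, the Bayes step via \eqref{eq:refresh-lb-bayes-gap} and \eqref{eq:refresh-lb-posterior-tv}, the reduction to $t=T$, and the Cauchy--Schwarz optimization all match the paper's proof. The step that fails is the arrival count. The upper regularity inequality is $N_m(t)-N_m(s)\le 2\lambda_m(t-s)$. For $T\ge 2/\lambda_m$ it therefore gives $N_m(T)\le 2\lambda_m T$, not $\lambda_mT+2$. For $T<2/\lambda_m$ your enlargement gives only $N_m(T)\le N_m(2/\lambda_m)\le 4$, which is larger than $\lambda_mT+2$ in exactly that case.

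The claim $N_m(T)\le\lambda_mT+2$ is in fact false on admissible schedules. Take $N_m(t)=\lfloor 3\lambda_m t/2\rfloor$. For $u=\lambda_m(t-s)\ge 2$ its increment lies strictly between $\tfrac32u-1\ge\tfrac12u$ and $\tfrac32u+1\le 2u$, so the schedule is $\lambda$-regular. Yet at $\lambda_mT=8$ it has $12>10$ arrivals.

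The correct uniform count is $N_m(T)\le 2(\lambda_mT+2)$. The KL bound then becomes $4\sum_m\mu_m^2(\lambda_mT+2)/\sigma_m^2$. Requiring this to be at most $1/2$ gives the constraint $\sum_m\mu_m^2(\lambda_mT+2)/\sigma_m^2\le 1/8$, and Cauchy--Schwarz returns exactly $C^\star=\frac{1}{\sqrt8}\bigl(\sum_m w_m^2\sigma_m^2/(\lambda_mT+2)\bigr)^{1/2}$.

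So your claimed improvement to the constant $1/2$ comes from the miscount and is not established. Your closing remark also misidentifies the source of $1/\sqrt8$: it is not Pinsker slack but the factor $2$ in the upper regularity inequality. Keeping your constraint $\le 1/4$ with the correct count would leave $\kl$ up to $1$ and a Bayes bound weaker than the statement's. Once the count is corrected, your argument coincides with the paper's proof and establishes the proposition as stated.
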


\begin{proof}
Choose amplitudes $A_m\ge0$ and one uniform sign $\bsign\in\{-1,+1\}$. At
every time, client $m$ supplies an independent sample from
$\mathcal N(\bsign A_m,\sigma_m^2)$ with loss $\ell(x;\xi)=\xi x_1$. Let
$C=\sum_mw_mA_m$. The objectives have zero drift and population gradient
bound $C$, so both signs give admissible instances whenever $C\le G$.

For every $T>0$, regularity implies $N_m(T)\le2\lambda_mT+4$: when
$T\ge2/\lambda_m$ the upper inequality applies directly, and otherwise
$N_m(T)\le N_m(2/\lambda_m)\le4$. The two full transcripts through time
$T$ therefore have KL divergence at most
\[
 \sum_m\frac{2A_m^2N_m(T)}{\sigma_m^2}
 \le4\sum_m\frac{(\lambda_mT+2)A_m^2}{\sigma_m^2}.
\]
If the last sum, without the factor $4$, is at most $1/8$, the full
transcripts have total variation at most $1/2$, and so do the partial
transcripts at every earlier time. At every time, the two objective gaps
of any common candidate action add up to $DC$, so the two-point Bayes
argument gives expected gap at least $DC/4$ at every time and hence for
the average over $[0,T]$.

By the Cauchy--Schwarz inequality,
\[
 \sup\Bigl\{\sum_mw_mA_m:\ \sum_m\frac{(\lambda_mT+2)A_m^2}{\sigma_m^2}\le\frac18\Bigr\}
 =\frac1{\sqrt8}\Bigl(\sum_m\frac{w_m^2\sigma_m^2}{\lambda_mT+2}\Bigr)^{1/2},
\]
with equality for nonnegative $A_m$ proportional to
$w_m\sigma_m^2/(\lambda_mT+2)$. If
this value exceeds $G$, scale the amplitudes down until $C=G$; the
information constraint remains valid. Maximizing the Bayes bound over
these amplitudes and selecting one of the two signs proves the claim.
When $G=0$ the bound is zero and there is nothing to prove.
\end{proof}

The drifting and stationary constructions give separate lower bounds on
the same class, so their maximum is a valid lower bound. We do not add
their values, which would require a joint construction.

\section{The joint comparison regime and the remaining scale gap}
\label{app:refresh-regime}

\subsection{Proof of Corollary~\ref{cor:refresh-joint}}

The choice of $K$ gives $4LD/[K(K+1)]\le4LD/K^2\le\Hg$. Since
$\Delta=K\tau$, the sum of the count-floor and timing charges in
\eqref{eq:refresh-optimized} equals $DK\Cg\le D\Hg$. The statistical
threshold in \eqref{eq:refresh-joint-horizon} gives
$2\beta_0\Xi/\sqrt T\le\Hg/2$, and its second line bounds the remaining
transient terms, divided by $D$, by $\Hg/2$; thus
$\transient_T\le D\Hg$. Corollary~\ref{cor:refresh-opt} now proves the
upper guarantee uniformly over the fixed-vector class. The algorithm
belongs to the pooled causal class, so its worst-case risk bounds
$\Risk_T^\star(\gamma)$ from above. The count and gradient hypotheses,
together with $T\ge\max_jT_j$, give the lower inequality by
Corollary~\ref{cor:refresh-dyadic}.\hfill$\qedsymbol$

For orientation, minimizing the continuous relaxation
$4LD/K^2+\Cg K$ over $K$ gives $K=(8LD/\Cg)^{1/3}$ and the value
$3(LD\,\Cg^2)^{1/3}$. The sufficient integer condition of the corollary
therefore expresses, up to constants, the two requirements
$\Cg\lesssim\Hg$ and $LD\,\Cg^2\lesssim\Hg^3$. The lower-bound count
conditions yield only $\sum_m a_m/\lambda_m\le\sum_jC_j$ and impose no
restriction on $L$ or $\tau$.

\subsection{A feasible family with positive communication cost}

For any integer $J\ge1$, take $M=J$ clients and set
\begin{align}
\label{eq:refresh-regime-family}
 q_m&=4^{m-1},& w_m&=\frac1J,&
 \lambda_m&=\frac{100J^{1/3}}{q_m},&
 \gamma_m&=\frac J{q_m},& \sigma_m^2&=100J^{7/3}.
\end{align}
Then $a_m=1/q_m$, $b_m=q_m$, and $\Hg=J^{2/3}$. Use $D=1$, $L=\Hg$,
$G=J$, the regular arrivals $N_m(t)=\lfloor\lambda_mt\rfloor$, and
\[
 K=2,\qquad \tau=\frac{\Hg}{200\bar\gamma}>0,
 \qquad\Delta=K\tau,\qquad
 T=10^4q_{\max}J^2 .
\]
Each nonempty dyadic bin contains exactly one client, and the lower
construction has $T_m=128^{-1/3}q_m$ and $C_m=128^{-1/3}/2$. Hence
$\lambda_mT_m=100\cdot128^{-1/3}J^{1/3}>2$,
$\sum_mC_m=128^{-1/3}J/2<G$, and $T\ge\max_mT_m$. The upper count floors
are inactive, because $\lambda_mh_m^\star=100(2/3)^{1/3}>2K$. Moreover
$K\Cg/\Hg=0.10$, and the three nontransient remainder terms in
\eqref{eq:refresh-optimized}, divided by $D\Hg$, are
\[
 \frac{4LD}{K(K+1)\Hg}=\frac23,\qquad
 \frac{4K\sum_m a_m/\lambda_m}{\Hg}=0.08,\qquad
 \frac{\bar\gamma(K\tau+\Delta)}{\Hg}=0.02 .
\]
Here $\Xi^2=\sum_mq_m\le4q_{\max}/3$, $s_0=q_{\max}/(25J^{1/3})$, and
$1\le\bar\gamma<4/3$. The statistical horizon threshold is at most
$64\beta_0^2T/(3\cdot10^4)<0.023\,T$, and each other threshold is below
$0.001\,T$, so all four horizon requirements hold. We also obtain
\[
 \frac{\transient_T}{D\Hg}
 \le\frac{\beta_0}{25\sqrt3}J^{-5/3}
   +\frac{\beta_0}{10^5\sqrt3}J^{-11/6}
   +\frac1{250000J^2}+\frac1{10^6J}
 <0.076 .
\]
Consequently all upper remainders together are less than $0.843\,D\Hg$.
The parameters vary with $J$: the example certifies that the joint
regime is nonempty, not that it models a fixed empirical population.
Note also that on this segment the fixed midpoint already gives
$F^t(0)-\min_{\cX_D}F^t\le GD/2=J/2$, since every feasible point lies
within $D/2$ of zero and $\|\nabla F^t\|\le G$; this no-data guarantee
is smaller than the complete upper expression at the displayed values of
$J$. The family therefore establishes feasibility, not numerically
informative constants at those scales.

\subsection{A limitation of the disjoint-group construction}

\begin{proposition}[Retuning does not close the scale gap]
\label{prop:refresh-construction-barrier}
Suppose $M=J$, $a_m=1/q_m$, $b_m=q_m$, and $q_m=4^{m-1}$. For any
disjoint nonempty groups, block lengths, and amplitudes satisfying the
drift and information constraints of Theorem~\ref{thm:refresh-lower},
\[
 \Bigl(\sum_{j}C_j^2\Bigr)^{1/2}
 \le c_{\rm grp}\sqrt J,\qquad
 c_{\rm grp}=\frac1{2(1-1/4)}
       +\frac1{\sqrt{512}\,(1-1/2)}<0.756 .
\]
Since $\Hg=J^{2/3}$ in this family, the lower expression of the theorem
cannot be of order $D\Hg$ uniformly in $J$, whatever the partition and
amplitudes.
\end{proposition}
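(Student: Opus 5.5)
The plan is to show that every group amplitude satisfies $C_j\le c_{\rm grp}$, whatever the group's membership and block length. Since the groups are disjoint, nonempty and drawn from $M=J$ clients, there are at most $J$ of them, so $\sum_jC_j^2\le Jc_{\rm grp}^2$. Taking square roots gives the displayed bound. The final remark follows by comparing $\sqrt J$ with $\Hg=J^{2/3}$: the ratio $J^{-1/6}$ tends to zero.

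\emph{Reparametrize the constraints.} Put $u_m=w_mA_m\ge0$, so that $C_j=\sum_{m\in\mathcal J_j}u_m$. From $a_m=w_m\gamma_m=1/q_m$, the drift constraint $A_m\le\gamma_mT_j/2$ becomes $u_m\le T_j/(2q_m)$. From $b_m=w_m^2\sigma_m^2/\lambda_m=q_m$ we get $\lambda_m/\sigma_m^2=w_m^2/q_m$. The information constraint of Theorem~\ref{thm:refresh-lower} therefore reads
\[
4T_j\sum_{m\in\mathcal J_j}\frac{u_m^2}{q_m}\le\frac1{128}.
\]
In particular each single term satisfies $u_m\le\sqrt{q_m/(512\,T_j)}$. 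Only these two per-client caps, $u_m\le\min\{T_j/(2q_m),\sqrt{q_m/(512T_j)}\}$, will be used.

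\emph{Split each group at the scale $T_j$.} For the clients of $\mathcal J_j$ with $q_m\ge T_j$, use the drift cap. The values $q_m$ are distinct powers of $4$, so the ratios $T_j/q_m\le1$ form a subset of a geometric sequence with ratio $1/4$ starting at most at $1$. Their contribution is therefore at most $\tfrac12\sum_{i\ge0}4^{-i}=1/(2(1-1/4))$.

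For the clients with $q_m<T_j$, use the information cap. The values $\sqrt{q_m/T_j}<1$ are again distinct and lie in a geometric sequence with ratio $1/2$, so this contribution is at most $\frac1{\sqrt{512}}\sum_{i\ge0}2^{-i}=1/(\sqrt{512}(1-1/2))$. Adding the two pieces gives $C_j\le c_{\rm grp}$, and the numerical value $\tfrac23+\tfrac{2}{\sqrt{512}}<0.756$ is a direct evaluation.

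The main obstacle is to make sure the per-group bound does not depend on how the groups or block lengths are chosen. One might worry that a single long block could collect many clients and grow like $\sqrt{\#\text{clients}}$. The geometric spacing $q_m=4^{m-1}$ rules this out: within any group, only $O(1)$ clients lie near the scale $T_j$, and both caps decay geometrically away from it. Using the per-term information cap, rather than Cauchy--Schwarz over the whole group, is what makes the sum converge independently of the group's size. A larger $C_j$ could only come from many clients at comparable scales, and the family excludes that.
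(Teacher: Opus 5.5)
Your proposal is correct and follows the paper's proof essentially step for step: the same substitution $u_m=w_mA_m$, the same two per-client caps from the drift constraint and the single-term information constraint, the same split of each group at the scale $T_j$ into geometric series with ratios $1/4$ and $1/2$, and the same count of at most $J$ groups. The only cosmetic difference is that you place the boundary case $q_m=T_j$ on the drift side instead of the information side, which changes nothing.
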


\begin{proof}
Fix one group $j$ with block length $T_j$, and write $\bar A_m=w_mA_m$
for its clients. The drift condition gives $\bar A_m\le T_j/(2q_m)$. The
information condition $4T_j\sum_m\bar A_m^2/b_m\le1/128$ separately
gives $\bar A_m\le\sqrt{q_m/(512T_j)}$ for every selected client.
Therefore
\[
 C_j\le\sum_{q_m>T_j}\frac{T_j}{2q_m}
          +\sum_{q_m\le T_j}\sqrt{\frac{q_m}{512T_j}}
 \le\frac1{2(1-1/4)}
       +\frac1{\sqrt{512}\,(1-1/2)} .
\]
Both sums are dominated by geometric series, and omitting clients only
reduces them. There are at most $J$ disjoint nonempty groups, which
proves the claim.
\end{proof}

For the singleton groups of \eqref{eq:refresh-regime-family}, the
amplitude norm equals $128^{-1/3}\sqrt J/2$ exactly. The existing
construction thus already achieves its best order on this family, and
the $J^{1/6}$ loss in the final H\"older step of
Appendix~\ref{app:refresh-lower} has the correct order for it. Settling the unrestricted minimax dependence would require
a different lower construction or a sharper upper method;
Proposition~\ref{prop:refresh-construction-barrier} is not a minimax
upper bound.

\subsection{Numerical evaluation of the complete guarantees}

Figure~\ref{fig:refresh-regime} evaluates the displayed bounds without
simulating arrival paths. The left panel uses the feasible family above;
the right panel evaluates \eqref{eq:refresh-average} with the selected
heterogeneous windows of the three fixed-path experiments, which are not
claimed to satisfy Corollary~\ref{cor:refresh-joint}. These calculations distinguish a proved comparison
regime from an exploratory instance on which the bound can be loose.

\begin{figure}[htbp]
 \centering
 \includegraphics[width=\linewidth]{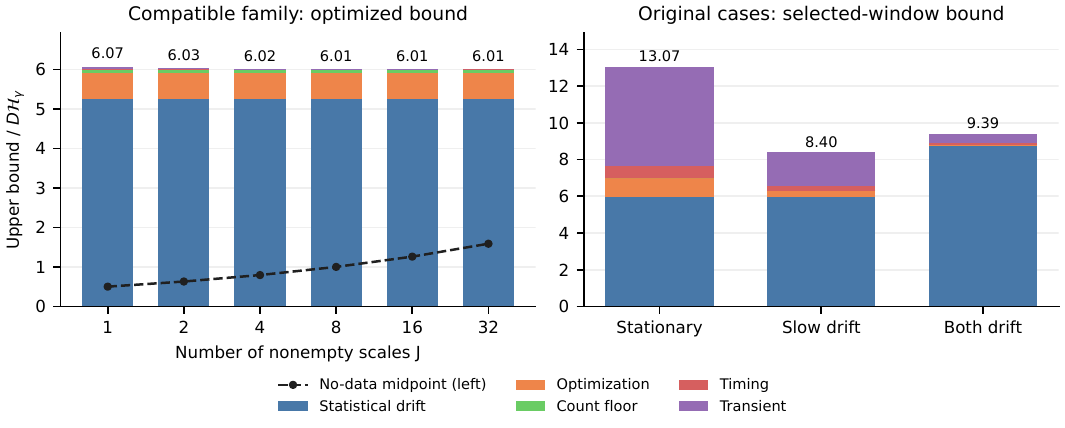}
 \caption{Complete upper guarantees divided by $D\Hg$.
 Left: Corollary~\ref{cor:refresh-opt} on the family
 \eqref{eq:refresh-regime-family}, including its conservative count-floor
 surcharge even though the floors are inactive; the dashed line is the
 no-data midpoint guarantee $GD/2$, which is smaller at all shown scales.
 Right: Theorem~\ref{thm:refresh-upper} at the selected heterogeneous
 windows of the three original synthetic cases, where no separate floor
 surcharge arises; the stationary case has a substantial finite-horizon
 transient. These are evaluations of inequalities, not observed errors,
 and not evidence that the upper and lower numerical constants are
 close.}
 \label{fig:refresh-regime}
\end{figure}

\clearpage
\section{When forgetting helps on periodic streams}
\label{app:periodic-memory}

The drift upper bound protects against every admissible path, whereas a
particular oscillating path may favor the full prefix. This appendix
gives an exact diagnostic for quadratic losses and uniformly weighted
completed buffers. It identifies a noise--frequency threshold for a single
client and proves the separation between shared and client-specific
windows stated in Proposition~\ref{prop:periodic-main}. The frequency
response of a rectangular average is classical
\citep[Chapter~15]{smith1997dsp}; the calculation here includes the
interval over which a completed buffer is held and the noise of the
stationary clients.

\paragraph{Model and estimator class.}
Let $L>0$ and use the public loss
\[
 \ell(x;Y)=\frac L2x^2-Yx,\qquad x\in\cX=[-D/2,D/2].
\]
At times $i/\lambda_m$, $i=1,2,\ldots$, client $m$ observes
\begin{equation}
\label{eq:periodic-stream}
 Y_{m,i}=\bar\vartheta_m+A_m\cos(\omega_m i/\lambda_m+\Phi_m)
                      +\sigma_m\varepsilon_{m,i},
\end{equation}
where $A_m\ge0$, $\omega_m>0$, and the signs $\varepsilon_{m,i}$ are
independent Rademacher variables; stationary clients have $A_m=0$. For
the phase-averaged diagnostic, the phases $\Phi_m$ are independent and
uniform on $[0,2\pi]$. The phase average is a prior over fixed population
paths: conditionally on any fixed phase vector, all observations are
independent and the model of Section~\ref{sec:refresh-model} applies.
It does not assert unconditional independence after mixing over phases.

Assume
\begin{equation}
\label{eq:periodic-interior}
 \frac D2>\frac1L\sum_mw_m(|\bar\vartheta_m|+A_m+\sigma_m),
\end{equation}
so that all aggregate empirical and population minimizers are interior.
The smoothness constant is $L$, the noise bounds are $\sigma_m^2$, the
certified drift bounds are $\gamma_m=A_m\omega_m$, and an aggregate
gradient bound is $G=LD/2+\sum_mw_m(|\bar\vartheta_m|+A_m)$.

A finite memory at client $m$ has length $h_m=n_m/\lambda_m$ with
$n_m\in\mathbb N$. At time $ph_m$ its estimate is replaced by the average
of the observations with indices $(p-1)n_m+1,\ldots,pn_m$, and that
average is held until $(p+1)h_m$; before the first complete buffer, the
arrived prefix is used. A prefix memory always averages all arrived
observations. Set $\tau=0$ and let the server update at the union of the
client update times, reporting the exact weighted empirical minimizer
$x_t=L^{-1}\sum_mw_m\widehat\vartheta_m(t)$, where
$\widehat\vartheta_m(t)$ is client $m$'s current average. A fixed point
$\xinit$ is reported until $t_{\mathrm{s}}=\max_m1/\lambda_m$, when every client has
at least one sample. These rules define an admissible empirical
estimator; they are not the batch-splitting algorithm of
Theorem~\ref{thm:refresh-upper}.

\paragraph{An exact finite-horizon identity.}
Let $\mathcal M_m(t)$ be the set of observation indices used by client
$m$ at time $t\ge t_{\mathrm{s}}$, let $n_m(t)=|\mathcal M_m(t)|$, and let
\[
 \mathcal K_m(t)=\frac1{n_m(t)}\sum_{i\in\mathcal M_m(t)}
             e^{\ii\omega_m i/\lambda_m}
\]
be the complex coefficient of its average. Write $\Qrisk_T$ for the
current-objective risk averaged over both the phases and the observation
noise. For $T\ge t_{\mathrm{s}}$,
\begin{align}
\label{eq:periodic-finite}
 \Qrisk_T
 &=\frac{t_{\mathrm{s}}}{2LT}
     \Bigl[\Bigl(L\xinit-\sum_mw_m\bar\vartheta_m\Bigr)^2
                      +\frac12\sum_mw_m^2A_m^2\Bigr]\nonumber\\
 &\quad+\frac1{2LT}\sum_mw_m^2\int_{t_{\mathrm{s}}}^T
       \Bigl[\frac{A_m^2}{2}
          \bigl|\mathcal K_m(t)-e^{\ii\omega_mt}\bigr|^2
                     +\frac{\sigma_m^2}{n_m(t)}\Bigr]dt .
\end{align}
Finite-horizon initialization and prefix variance can therefore be
retained exactly rather than approximated by a steady-state formula.

\begin{theorem}[Exact periodic risk of completed windows]
\label{thm:periodic-risk}
Under \eqref{eq:periodic-stream}--\eqref{eq:periodic-interior}, the
long-run phase-averaged risk
$\Qrisk_\infty:=\lim_{T\to\infty}\Qrisk_T$ exists. Define the phase
increment per sample $\varphi_m=\omega_m/\lambda_m$,
\[
 \mathcal K_n(\varphi)=\frac1n\sum_{l=0}^{n-1}e^{-\ii l\varphi},\qquad
 \sinc(z)=\begin{cases}\sin(z)/z,&z\ne0,\\1,&z=0,\end{cases}
\]
and, for a finite window of $n$ samples, the bias factor
\begin{equation}
\label{eq:periodic-factor}
 \Upsilon_n(\varphi)=1+|\mathcal K_n(\varphi)|^2
       -2\operatorname{Re}\Bigl\{
          \mathcal K_n(\varphi)\,e^{-\ii n\varphi/2}
                       \sinc(n\varphi/2)\Bigr\}.
\end{equation}
If $\sin(\varphi/2)\ne0$, an equivalent real expression is
\begin{equation}
\label{eq:periodic-factor-real}
 \Upsilon_n(\varphi)=1+\Bigl(\frac{\sin(n\varphi/2)}{n\sin(\varphi/2)}\Bigr)^2
       -2\,\frac{\sin(n\varphi/2)}{n\sin(\varphi/2)}\,
          \sinc(n\varphi/2)\cos\bigl((2n-1)\varphi/2\bigr).
\end{equation}
A client with a finite window of $n_m$ samples contributes
\begin{equation}
\label{eq:periodic-client-risk}
 \frac{w_m^2}{2L}
      \Bigl[\frac{A_m^2}{2}\Upsilon_{n_m}(\varphi_m)
                                      +\frac{\sigma_m^2}{n_m}\Bigr]
\end{equation}
to $\Qrisk_\infty$; a prefix client contributes the same expression
with the bracket replaced by
$\frac12A_m^2(1+\mathbf1\{\varphi_m\in2\pi\mathbb Z\})$. The long-run
risk is the sum of these contributions over clients. In particular, a
stationary prefix contributes zero, and a nonaliased moving prefix
contributes $w_m^2A_m^2/(4L)$.
\end{theorem}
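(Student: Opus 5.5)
The plan is to start from the exact finite-horizon identity \eqref{eq:periodic-finite} and take $T\to\infty$ client by client. If that identity has not yet been justified, I would derive it first. Under \eqref{eq:periodic-interior} both the reported point and the current minimizer are interior, and the loss is quadratic. Hence $F^t(x_t)-\min F^t=\frac1{2L}\bigl(\sum_mw_m[\widehat\vartheta_m(t)-\vartheta_m(t)]\bigr)^2$, where $\vartheta_m(t)=\bar\vartheta_m+A_m\cos(\omega_mt+\Phi_m)$.

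The error at client $m$ splits into two parts. The first is a bias $A_m\operatorname{Re}\{e^{\ii\Phi_m}(\mathcal K_m(t)-e^{\ii\omega_mt})\}$; note that $\bar\vartheta_m$ cancels because the weights of the average sum to one. The second is a centered Rademacher average with variance $\sigma_m^2/n_m(t)$. Cross terms between clients vanish after averaging, because the phases are independent and uniform and the noises are independent and centered. The square of the bias averages over $\Phi_m$ to $\frac{A_m^2}2|\mathcal K_m(t)-e^{\ii\omega_mt}|^2$. The initial interval $[0,t_{\mathrm s})$ gives the first line of \eqref{eq:periodic-finite} by the same phase average. That term is $O(1/T)$ and disappears in the limit.

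\emph{Finite windows.} Take $n=n_m$ and $h=n/\lambda_m$. On $[ph,(p+1)h)$ with $p\ge1$ the index set is $\{(p-1)n+1,\ldots,pn\}$. Reindexing from the newest sample, $i=pn-l$, gives
\[
\mathcal K_m(t)=e^{\ii\omega_m ph}\,\mathcal K_n(\varphi_m).
\]
For $t=ph+u$ with $u\in[0,h)$ this yields
\[
|\mathcal K_m(t)-e^{\ii\omega_mt}|^2=|\mathcal K_n(\varphi_m)-e^{\ii\omega_mu}|^2,
\]
which does not depend on $p$. The integrand is therefore $h$-periodic after time $h$ and bounded. The prefix segment $[t_{\mathrm s},h)$ is of fixed length, so the Cesàro average converges to the average over one period.

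Expanding the square gives $1+|\mathcal K_n|^2-2\operatorname{Re}\{\mathcal K_n\,\overline{e^{\ii\omega_mu}}\}$. Using
\[
\frac1h\int_0^he^{-\ii\omega_mu}\,du=e^{-\ii n\varphi_m/2}\sinc(n\varphi_m/2),
\]
which holds because $\omega_mh=n\varphi_m$, the period average is exactly $\Upsilon_n(\varphi_m)$ in \eqref{eq:periodic-factor}. The variance term is the constant $\sigma_m^2/n$. Together these give \eqref{eq:periodic-client-risk}.

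The real form \eqref{eq:periodic-factor-real} follows from the geometric sum
\[
\mathcal K_n(\varphi)=e^{-\ii(n-1)\varphi/2}\frac{\sin(n\varphi/2)}{n\sin(\varphi/2)}.
\]
With this, the phase inside the real part becomes $e^{-\ii(2n-1)\varphi/2}$.

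\emph{Prefix clients.} Here $n_m(t)=\lfloor\lambda_mt\rfloor$. The variance contributes $\frac1T\int\sigma_m^2/n_m(t)\,dt=O(\log T/T)\to0$. If $\varphi_m\notin2\pi\mathbb Z$, then $|\mathcal K_m(t)|\le 1/(n_m(t)|\sin(\varphi_m/2)|)\to0$, so the bias integrand tends to $|e^{\ii\omega_mt}|^2=1$ and the bracket is $A_m^2/2$. If $\varphi_m\in2\pi\mathbb Z$, then all samples share one phase and $\mathcal K_m(t)=1$. The time average of $|1-e^{\ii\omega_mt}|^2$ is $2$, which gives $A_m^2$. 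A stationary prefix client ($A_m=0$) therefore contributes zero.

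Summing over clients, with existence of each limit, gives existence of $\Qrisk_\infty$ and the stated decomposition. I expect the main obstacle to be the phase bookkeeping in the finite-window case. This means matching the reindexing convention, the conjugation in the cross term, and the hold interval $[ph,(p+1)h)$, so that the result is exactly $\mathcal K_n(\varphi)e^{-\ii n\varphi/2}\sinc(n\varphi/2)$ and not its conjugate. I would check this first on the case $n=1$, where $\Upsilon_1(\varphi)=2-2\sinc(\varphi/2)\cos(\varphi/2)$ can be computed directly.
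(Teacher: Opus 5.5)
Your proposal is correct and follows essentially the same route as the paper. Like the paper, you phase-average to get the finite-horizon identity, reindex the completed window as $\mathcal K_m(t)=e^{\ii\omega_m ph}\mathcal K_n(\varphi_m)$, average over the hold interval using $\frac1h\int_0^h e^{-\ii\omega u}\,du=e^{-\ii n\varphi/2}\sinc(n\varphi/2)$, take the geometric sum for the real form, and split prefixes into the aliased and nonaliased cases. Your conjugation comes out as $e^{-\ii\omega_m u}$, which gives exactly \eqref{eq:periodic-factor}, and your $n=1$ check reproduces the value $\Upsilon_1(\varphi)=2-2\sinc(\varphi)$ used in Corollary~\ref{cor:periodic-separation}.
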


\begin{proof}
The population objective is $F^t(x)=Lx^2/2-\vartheta(t)x$ with
$\vartheta(t)=\sum_mw_m[\bar\vartheta_m+A_m\cos(\omega_mt+\Phi_m)]$, and
its gap at $x$ is $(Lx-\vartheta(t))^2/(2L)$. For $t\ge t_{\mathrm{s}}$, the error
of client $m$'s empirical mean is
\[
 A_m\operatorname{Re}\Bigl\{
       e^{\ii\Phi_m}\bigl[\mathcal K_m(t)-e^{\ii\omega_mt}\bigr]
                        \Bigr\}
 +\frac{\sigma_m}{n_m(t)}\sum_{i\in\mathcal M_m(t)}\varepsilon_{m,i}.
\]
It has zero mean over phase and noise, and independence across clients
removes all cross terms even when the replacement times differ. The
identity $\E_\Phi(\operatorname{Re}(e^{\ii\Phi}z))^2=|z|^2/2$ and the
independence of the noise give \eqref{eq:periodic-finite}, including the
fixed-initialization interval.

For a completed window of length $h=n/\lambda$ and $t=ph+u$ with
$0\le u<h$, the noiseless complex coefficient is
$\mathcal K(t)=e^{\ii\omega ph}\mathcal K_n(\varphi)$, so the phase-averaged
squared bias, divided by $A^2/2$, equals
$|\mathcal K_n(\varphi)-e^{\ii\omega u}|^2$. Averaging over $u\in[0,h)$ and
using
\[
 \frac1h\int_0^h e^{-\ii\omega u}\,du
       =e^{-\ii n\varphi/2}\sinc(n\varphi/2)
\]
gives \eqref{eq:periodic-factor}; the geometric sum for $\mathcal K_n$ gives
\eqref{eq:periodic-factor-real}. The noise variance is exactly
$\sigma^2/n$. The initial finite prefix and the incomplete last holding
interval vanish after division by $T$.

For a prefix with $N(t)=\lfloor\lambda t\rfloor$ samples, the complex
sample average tends to zero if $\varphi\notin2\pi\mathbb Z$, by the
finite geometric-sum formula, and is identically one if
$\varphi\in2\pi\mathbb Z$; in the latter case $\omega>0$ and the time
average of $|1-e^{\ii\omega t}|^2$ equals two. The prefix noise variance
$\sigma^2/N(t)$ has vanishing time average. Boundedness justifies
passing to these limits in \eqref{eq:periodic-finite}.
\end{proof}

\paragraph{A necessary and sufficient improvement test.}
For a nonaliased moving client with $\sigma_m>0$, replacing only its
prefix by a finite completed window of $n$ samples strictly improves the
long-run risk if and only if
\begin{equation}
\label{eq:periodic-benefit}
 \frac{A_m^2n}{2\sigma_m^2}\bigl[1-\Upsilon_n(\varphi_m)\bigr]>1 .
\end{equation}
This exact test includes the hold delay and the discrete arrival rate.
If $\Upsilon_n\ge1$, the window cannot help at any noise level; if
$\Upsilon_n<1$, the display gives the required signal-to-noise ratio.
The weights cancel when a single client's window is changed, so the
other clients may remain on their prefixes.

For interpretation, take the high-rate limit $\varphi\to0$ with
$n\varphi\to\omega h$. Then
\[
 \Upsilon_n(\varphi)\longrightarrow
 1+\sinc^2(\omega h/2)\bigl(1-2\cos(\omega h)\bigr),
\]
and the corresponding high-rate bias--variance expression, with
$\sigma^2/(\lambda h)$ for the sampling noise, favors the finite window
exactly when
\begin{equation}
\label{eq:periodic-continuous-test}
 \frac{A^2\lambda}{\sigma^2\omega}\,
 \frac{\omega h}{2}\,\sinc^2(\omega h/2)\bigl(2\cos(\omega h)-1\bigr)>1 .
\end{equation}
Equation~\eqref{eq:periodic-continuous-test} tests the limiting
expression; \eqref{eq:periodic-benefit} remains the exact finite-rate
criterion. A long window attenuates noise but may average over several
changes of direction; a shorter window helps only if its signal gain
exceeds both its remaining hold bias and its variance cost.

\begin{proposition}[A shared window can be forced to keep the prefix]
\label{prop:periodic-common-prefix}
Suppose one client has a nonaliased sinusoidal mean with amplitude $A$,
frequency $\omega$, and weight $w$, while all other clients are
stationary, and recall $\Xi^2=\sum_mw_m^2\sigma_m^2/\lambda_m$.
Restrict the estimator to uniformly weighted completed-window empirical
minimization with one fixed shared length $h$, or to the full prefix at
every client. For a real length $h$, the completed interval
$((p-1)h,ph]$ contains the actual observations of each client, and its
sample count may vary with $p$. For every real $h$ with $\lambda_mh\ge1$
at every client, and for every fixed phase,
\begin{equation}
\label{eq:periodic-common-all-real}
 \liminf_{T\to\infty}\avgerr_T^{\rm shared}
 \ge\frac{w^2A^2}{4L}
             +\frac{\Xi^2-2w^2A^2/\omega}{2Lh}.
\end{equation}
Hence $\Xi^2\ge2w^2A^2/\omega$ makes every such shared window no better
than the prefix, and strict inequality makes every finite shared window
worse. For the phase-averaged risk at commensurate lengths
$\lambda_mh\in\mathbb N$, the sharper sufficient condition
$\Xi^2\ge w^2A^2/\omega$ holds.
\end{proposition}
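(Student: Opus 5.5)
The plan is to repeat the exact-risk decomposition behind \eqref{eq:periodic-finite}, now at a fixed phase and with a real shared length $h$. I will then bound the moving client's bias from below and each client's variance from below.

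\emph{Decomposition.} Write the gap as $(Lx_t-\vartheta(t))^2/(2L)$ with $Lx_t=\sum_m w_m\widehat\vartheta_m(t)$. Take expectation over the Rademacher noise only. The noise terms are centered and independent across clients, so
\[
\E\,\mathrm{gap}_t=\frac1{2L}\Bigl[w^2\bigl(\bar Y(t)-A\cos(\omega t+\Phi)\bigr)^2+\sum_m\frac{w_m^2\sigma_m^2}{n_m(t)}\Bigr].
\]
Here $\bar Y(t)$ is the moving client's average of noiseless signal values over its current completed interval $((p-1)h,ph]$, held on $[ph,(p+1)h)$. The stationary clients contribute only variance. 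For the variance, regularity of the fixed schedule $N_m(t)=\lfloor\lambda_mt\rfloor$ gives $n_m(t)\le\lambda_mh+1$. I will instead use the long-run count average $\lambda_mh$ per window and Jensen ($1/n$ is convex), which yields a time-averaged variance of at least $\sum_m w_m^2\sigma_m^2/(\lambda_m h)=\Xi^2/h$. The condition $\lambda_m h\ge1$ makes every window nonempty.

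\emph{Bias.} Expand $(\bar Y-A\cos(\omega t+\Phi))^2\ge A^2\cos^2(\omega t+\Phi)-2\bar Y A\cos(\omega t+\Phi)$, dropping $\bar Y^2\ge0$. The long-run time average of $A^2\cos^2$ is $A^2/2$, which after the factor $w^2/(2L)$ gives the term $w^2A^2/(4L)$. For the cross term, on each holding interval $\bar Y$ is constant with $|\bar Y|\le A$. Also $|\int_{ph}^{(p+1)h}\cos(\omega t+\Phi)\,dt|\le 2/\omega$. So each holding interval contributes at most $2A^2\cdot2/\omega$ to $\int 2\bar YA\cos$. Averaging over the $T/h$ intervals bounds the cross term by $4A^2/(\omega h)$.

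\emph{Matching the constant.} After multiplication by $w^2/(2L)$, this crude estimate gives $2w^2A^2/(L\omega h)$. That is four times larger than the claimed $w^2A^2/(L\omega h)$ (that is, $2w^2A^2/\omega$ over $2Lh$). This constant is the main obstacle. I will first try to exploit the correlation instead of the absolute bound. $\bar Y$ is itself an average of $A\cos$ over the previous interval, so the cross term is $2A^2\cdot\bar c_{p}\int_{\text{next}}\cos$, where $\bar c_p$ is the mean of $\cos$ over the previous window. Writing both as $\operatorname{Re}$ of complex exponentials gives $\bar c_p=\operatorname{Re}(e^{\ii(\cdot)}\kappa)$ with $|\kappa|\le\min\{1,2/(\omega h)\}$, by the same geometric-sum or sinc estimate as in Theorem~\ref{thm:periodic-risk}. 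Then I combine the $\bar Y^2$ term with the cross term by completing the square per interval: $\bar Y^2-2\bar Y c\ge -c^2$, where $c$ is the mean of $A\cos$ over the holding interval. This gives a per-interval loss $\ge A^2\overline{\cos^2}-c^2$. Since $|c|\le 2A/(\omega h)$ and $c^2\le 2A^2/(\omega h)$ when $\omega h\ge2$, this yields the stated $2w^2A^2/\omega$ coefficient in that regime. For $\omega h<2$, I will use the bound $|c|\le A$ together with $\overline{\cos^2}-c^2\ge0$ on short intervals, and check that the right side of \eqref{eq:periodic-common-all-real} is then dominated by the variance-only bound. Discretization of the sample means, as opposed to continuous averages, enters only through $|\bar Y|\le A$ and is harmless in the completing-the-square form.

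\emph{Conclusions.} The prefix risk is $w^2A^2/(4L)$ by Theorem~\ref{thm:periodic-risk}, as the nonaliased moving prefix and the stationary prefixes contribute zero. Hence $\Xi^2\ge2w^2A^2/\omega$ makes every shared window no better than the prefix, and strict inequality makes every finite window worse. For the phase-averaged commensurate case, \eqref{eq:periodic-client-risk} gives the exact expression $\frac{w^2A^2}{4L}\Upsilon_n+\Xi^2/(2Lh)$. I will show $1-\Upsilon_n\le 2/(\omega h)$ using $|\mathcal K_n|\le1$ and $|\sinc|\le 2/(n\varphi)$. This gives the sharper condition $\Xi^2\ge w^2A^2/\omega$.
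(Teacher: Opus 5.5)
Your final argument is essentially the paper's proof: completing the square on each holding interval is the paper's bound of the integrated squared bias by the within-interval variance of $A\cos(\omega t+\Phi)$, combined with Jensen on the window counts and the sinc estimate, and your case split at $\omega h=2$ replaces the paper's one-line bound $h\sinc^2(\omega h/2)\le\min\{h,4/(\omega^2h)\}\le2/\omega$. Two small repairs are needed. In the commensurate part, use the same completing-the-square step, $1-\Upsilon_n=2\operatorname{Re}(\mathcal K_nv)-|\mathcal K_n|^2\le|v|^2=\sinc^2(\omega h/2)$ with $v=e^{-\ii\omega h/2}\sinc(\omega h/2)$; the bounds $|\mathcal K_n|\le1$ and $|\sinc|\le2/(n\varphi)$ alone do not give $2/(\omega h)$, since dropping $-|\mathcal K_n|^2$ yields only $4/(\omega h)$, and for $\omega h<2$ you also need $|\sinc|\le1$. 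Also take the fixed-phase prefix risk $w^2A^2/(4L)$ from the nonaliased geometric-sum limit, not from the phase-averaged Theorem~\ref{thm:periodic-risk}.
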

\begin{proof}
First let $h$ be any real length satisfying the count condition, so that
every completed interval contains at least one observation per client.
The empirical minimizer is constant on each holding interval, even if
the server reports more often. Whatever its deterministic conditional
mean on that interval, its integrated squared bias is at least the
within-interval variance of $wA\cos(\omega t+\Phi)$. The long-run
average of $\cos^2(\omega t+\Phi)$ is $1/2$, whereas the square of the
mean cosine over any interval of length $h$ is at most
$\sinc^2(\omega h/2)$. Therefore the long-run squared bias has lower
limit at least $w^2A^2[1/2-\sinc^2(\omega h/2)]$.

If $n_{m,p}$ is the count in client $m$'s $p$th completed interval, the
independent noise contributes variance $\sum_mw_m^2\sigma_m^2/n_{m,p}$.
The average count satisfies
$P^{-1}\sum_{p=1}^Pn_{m,p}=\lfloor\lambda_mPh\rfloor/P\to\lambda_mh$,
so Jensen's inequality gives
$\liminf_P P^{-1}\sum_{p=1}^P1/n_{m,p}\ge1/(\lambda_mh)$. Finally,
\[
 h\sinc^2(\omega h/2)
 \le\min\{h,\,4/(\omega^2h)\}\le2/\omega .
\]
Adding bias and variance and dividing by $2L$ proves
\eqref{eq:periodic-common-all-real}; bounded startup and incomplete last
intervals do not affect the lower limit. Nonaliasing ensures that the
full prefix has risk $w^2A^2/(4L)$ for every fixed phase.

For the sharper phase-averaged statement, use a commensurate length and
write $v=e^{-\ii\omega h/2}\sinc(\omega h/2)$. Completing the square in
\eqref{eq:periodic-factor} yields
\[
 1-\Upsilon_n(\varphi)
 =2\operatorname{Re}(\mathcal K_nv)-|\mathcal K_n|^2
 \le |v|^2=\sinc^2(\omega h/2).
\]
The shared-window risk minus the prefix risk is therefore at least
\[
 \frac1{2L}\Bigl[
          \frac{\Xi^2}h-\frac{w^2A^2}{2}\bigl(1-\Upsilon_n(\varphi)\bigr)\Bigr]
 \ge\frac{\Xi^2-w^2A^2/\omega}{2Lh},
\]
and the assertion follows.
\end{proof}

\begin{corollary}[An unbounded ratio within the shared-window class]
\label{cor:periodic-separation}
Take two clients with $w_1=w_2=1/2$, $L=1$, $\cX=[-2,2]$ (so $D=4$),
$\bar\vartheta_1=\bar\vartheta_2=0$, $A_1=\omega_1=1$, and $A_2=0$. For
an integer $r\ge2$, set
\[
 \lambda_1=r,\quad\sigma_1=1/r,\qquad
 \lambda_2=1,\quad\sigma_2=\sqrt2 .
\]
Among all fixed real shared lengths $h\ge1$ in
Proposition~\ref{prop:periodic-common-prefix}, including the full prefix,
the smallest lower limit of the average risk is $1/16$, for every fixed
phase. The client-specific choice $h_1=1/r$ with a prefix at client two
has risk
\begin{equation}
\label{eq:periodic-separation}
 \Qrisk_\infty^{\rm sep}
 =\frac18\Bigl[1-\sinc(1/r)+\frac1{r^2}\Bigr]
 \le\frac7{48r^2}.
\end{equation}
Thus the ratio between the best shared-window risk and this
client-specific risk is at least $3r^2/7$. The client-specific limit
also holds for every fixed phase $\Phi_1$.
\end{corollary}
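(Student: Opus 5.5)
The argument has two halves, both taken from earlier results: a lower bound for every shared window from Proposition~\ref{prop:periodic-common-prefix}, and an exact value for the separate policy from Theorem~\ref{thm:periodic-risk}.

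\textbf{Shared windows.} With $w=1/2$, $A=\omega=1$, $L=1$ we have
\[
\Xi^2=\tfrac14\Bigl(\tfrac{1}{r^2}\cdot\tfrac1r+\tfrac{2}{1}\Bigr)=\tfrac12+\tfrac1{4r^3},
\qquad \tfrac{2w^2A^2}{\omega}=\tfrac12 .
\]
Hence $\Xi^2>2w^2A^2/\omega$ strictly. The count condition $\lambda_mh\ge1$ holds for every $h\ge1$, since $\lambda_1=r$ and $\lambda_2=1$.

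Inequality~\eqref{eq:periodic-common-all-real} then gives a lower limit of at least $w^2A^2/(4L)=1/16$ for every fixed phase. The full prefix attains $1/16$ exactly: the moving client is nonaliased because $\varphi_1=1/r\notin2\pi\mathbb Z$, so it contributes $w^2A^2/(4L)$, and the stationary prefix contributes zero.

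I will also check the interior condition~\eqref{eq:periodic-interior}:
\[
\tfrac12\bigl(1+\tfrac1r+\sqrt2\bigr)<2=\tfrac D2 .
\]
So the infimum over the shared class, prefix included, is $1/16$.

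\textbf{Separate policy.} Client one uses $n_1=\lambda_1h_1=1$ sample, and client two keeps its prefix. By~\eqref{eq:periodic-client-risk} the long-run risk is
\[
\frac{w_1^2}{2}\Bigl[\tfrac12\Upsilon_1(1/r)+\tfrac1{r^2}\Bigr],
\]
and client two contributes zero. For $n=1$ we have $\mathcal K_1=1$, so
\[
\Upsilon_1(\varphi)=2-2\operatorname{Re}\bigl(e^{-\ii\varphi/2}\sinc(\varphi/2)\bigr)=2-2\sinc(\varphi),
\]
using $\cos(z/2)\sin(z/2)/(z/2)=\sin z/z$. Substituting gives
\[
\tfrac18\bigl[\tfrac12(2-2\sinc(1/r))+\tfrac1{r^2}\bigr]=\tfrac18\bigl[1-\sinc(1/r)+\tfrac1{r^2}\bigr],
\]
which is the value in~\eqref{eq:periodic-main}.

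For the inequality, use $1-\sinc z\le z^2/6$ to get a bound of $\frac18\cdot\frac{7}{6r^2}=\frac7{48r^2}$. The ratio is then $\frac1{16}\big/\frac{7}{48r^2}=\frac{3r^2}{7}$.

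\textbf{Fixed-phase claim.} The step needing the most care is that the separate-policy limit holds for every fixed phase $\Phi$, not only on phase average. Here I would redo the bias computation directly. With $n=1$, the estimate on $[p/r,(p+1)/r)$ is the single sample at $p/r$. The squared bias averaged over that holding interval and over many blocks is an equidistributed average of a $2\pi$-periodic function of $\omega p/r+\Phi$. Because $1/r$ is irrational relative to $2\pi$, these long-run averages equal the phase average. The noise term $\sigma_1^2/n_1=1/r^2$ does not depend on the phase. The stationary client's prefix variance still vanishes in time average, and the cross terms have zero mean because the noise is independent of the bias. This yields the same limit for every $\Phi$.
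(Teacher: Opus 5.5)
Your proposal is correct and follows the paper's proof. The steps match: $\Xi^2=\tfrac12+\tfrac1{4r^3}>\tfrac12=2w^2A^2/\omega$ fed into Proposition~\ref{prop:periodic-common-prefix}, the nonaliased prefix value $1/16$, the evaluation $\Upsilon_1(\varphi)=2-2\sinc(\varphi)$ in~\eqref{eq:periodic-client-risk}, and the bound $1-\sinc(u)\le u^2/6$. The only variation is the fixed-phase step. You invoke equidistribution of $p/r \bmod 2\pi$, which is valid since $\pi$ is irrational. The paper instead notes that the per-block squared bias contains only the phase frequencies $0$ and $\pm2$, so a geometric-series Ces\`aro bound under the weaker non-resonance condition $e^{2\ii\omega h}\ne1$ suffices. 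Both arguments work here.
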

\begin{proof}
Condition \eqref{eq:periodic-interior} holds because
$\frac12(1+1/r+\sqrt2)<2$. In the model of
Section~\ref{sec:refresh-model} this instance has $D=4$, smoothness
constant $L=1$, $G=5/2$, and drift vector $(1,0)$. Every real shared length
$h\ge1$ has nonempty completed buffers. Here
$\Xi^2=1/(4r^3)+1/2>1/2=2w_1^2A_1^2/\omega_1$, so
Proposition~\ref{prop:periodic-common-prefix} makes every finite shared
window worse than the prefix, whose risk is $w_1^2A_1^2/4=1/16$.

For the client-specific rule, $n_1=1$ and
$\Upsilon_1(\varphi)=2-2\sinc(\varphi)$, while the stationary prefix
contributes nothing. Substitution into \eqref{eq:periodic-client-risk}
gives \eqref{eq:periodic-separation}; the final inequality uses
$1-\sinc(u)\le u^2/6$, obtained by integrating $1-\cos(v)\le v^2/2$
from zero to $u>0$.

To remove the phase averaging for the client-specific rule, expand the
moving client's squared bias over successive holding intervals. The
phase-dependent remainder is a fixed multiple of $e^{2\ii p\omega h}$
(and its conjugate), whose Ces\`aro mean vanishes whenever
$e^{2\ii\omega h}\ne1$; this holds for $h_1=1/r$, which is not a
nonzero integer multiple of $\pi$. The shared-window lower bound already
holds for every phase, without any resonance exclusion on $h$, and the
stationary noise is centered, so no cross-client bias term remains.
\end{proof}

The separation concerns uniformly weighted completed-window empirical
estimators with one fixed common length. It does not apply to arbitrary
learners that store a common full history, since such learners can
implement the client-specific effective weights themselves, and it is
not a communication lower bound, since $\tau=0$. What it isolates is the
concrete statistical restriction that the common-window comparison
imposes.

\section{Adapting over a finite collection of client memories}
\label{app:adaptive-memory}

The prescribed drift vector determines the oracle windows, but it need
not be an input to a finite model-selection procedure. This appendix
gives such a procedure under an additional bounded-loss assumption and
proves Theorem~\ref{thm:adaptive-summary} in its complete form
(Theorem~\ref{thm:adaptive-memory}). The procedure retains the pooled
variance of each fixed-memory candidate and pays an explicit selection
cost; it aggregates candidates and neither estimates the drift vector
nor tracks the best changing candidate.

\paragraph{Additional assumption and data split.}
Assume that known constants $B_m\ge0$ satisfy
\begin{equation}
\label{eq:adaptive-oscillation}
 |\ell(x;\xi)-\ell(y;\xi)|\le B_m,
 \qquad x,y\in\cX,
\end{equation}
almost surely for a sample from client $m$ at any source time, and put
$B=\sum_mw_mB_m>0$. The assumption bounds loss differences, not absolute
loss offsets. For logistic loss with feature vectors of norm at most
$\bar z_m$ on a domain of diameter $D$ one may take $B_m=D\bar z_m$; if
the sample loss contains a known deterministic regularizer, its
oscillation on $\cX$ is added to $B_m$.

Assign the odd-indexed arrivals of each client to a training stream and
the even-indexed arrivals to a validation stream, breaking ties in the
timestamps by an arbitrary fixed order. Conditionally on the arrival
schedule, the two sample arrays are independent. Validation observations
are never used in an expert's gradient updates.

Fix $K\ge1$, $c=K\tau$, and
\begin{equation}
\label{eq:adaptive-timing}
 s_0=6K/\lambda_{\min},\qquad
 \Delta\ge\max\{c,\,6/\lambda_{\min}\},\qquad
 T=s_0+c+R\Delta,
\end{equation}
with an integer $R\ge1$. Let $\grid=\{h^1,\ldots,h^E\}$, $E\ge2$, be a
deterministic collection of memory vectors with $6K/\lambda_m\le
h_m^e\le T$, chosen without knowledge of $\gamma$; the value $h_m^e=T$
keeps the training prefix throughout the horizon. Each expert $e$ runs
the projected-SGD procedure of Proposition~\ref{prop:refresh-sgd} on its
training buffers, with timestamp-only balanced batches, the same fixed
reset point $\xinit$, and solve starts $s_j=s_0+j\Delta$; its variance
inputs use its actual training-batch sizes. Write $x_j^e$ for its output
at $t_j=s_j+c$. Experts may share training observations with one
another and with their own later solves, but no expert uses another
expert's iterates or the aggregation weights.

\paragraph{Causal validation and aggregation.}
On the report interval $\Rint_j=[t_j,t_{j+1})$ the server reports
$x_j=\sum_{e=1}^E\pi_{j,e}x_j^e$. It maintains two exponential-weight
instances, one for even and one for odd $j$, each initialized with
uniform weights, with learning rate and weights
\[
 \etaew=\frac{2}{B}\sqrt{\frac{\log E}{R}},\qquad
 \pi_{j,e}=
 \frac{\exp\bigl\{-\etaew\sum_{0\le i\le j-2:\ i\equiv j\ ({\rm mod}\ 2)}
                         \vscore_{i,e}\bigr\}}
      {\sum_{v=1}^E\exp\bigl\{-\etaew\sum_{0\le i\le j-2:\ i\equiv j\ ({\rm mod}\ 2)}
                         \vscore_{i,v}\bigr\}},
\]
where empty sums are zero. The score $\vscore_{j,e}$ uses the validation
arrivals of client $m$ in $(t_j,t_{j+1}]$; call this set
$\mathcal V_{m,j}$ and its size $n_{m,j}^{\rm val}$. Then
\begin{equation}
\label{eq:adaptive-validation}
 \vscore_{j,e}=
 \sum_m\frac{w_m}{n_{m,j}^{\rm val}}
 \sum_{(u,\xi)\in\mathcal V_{m,j}}
       \bigl[\ell(x_j^e;\xi)-\ell(\xinit;\xi)\bigr].
\end{equation}
All these counts are positive by Lemma~\ref{lem:adaptive-counts}. The
interval $(t_j,t_{j+1}]$ has ended by the start of solve $j+2$, since
$t_{j+1}=s_j+\Delta+c\le s_j+2\Delta$. The server includes the points
$(x_j^e)_{e=1}^E$ in the first broadcast of that solve, and the clients
return the corresponding validation sums and counts with their first
gradient messages, so $\vscore_{j,e}$ is available before the report at
$t_{j+2}$. No validation sample later than the current report is used.
Because message size and local computation are uncharged, all experts'
gradient messages and the validation statistics can be bundled into the
same $K$ rounds. The report is $\xinit$ before $t_0$.

The construction uses the standard exponential-weight principle and the
standard reduction of delayed feedback to parallel forecasters
\citep{freund1997hedge,joulani2013delayed}. The result below accounts for
the independent validation stream, its nonuniform source times, and the
finite-sample cost of the split.

\begin{lemma}[Counts after alternating assignment]
\label{lem:adaptive-counts}
On a $\lambda$-regular schedule, either assigned stream of client $m$
has at least $\lambda_m(v-u)/6$ observations in $(u,v]$ whenever
$v-u\ge6/\lambda_m$.
\end{lemma}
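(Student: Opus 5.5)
The plan is to compare the count of assigned arrivals with the total count supplied by Definition~\ref{def:refresh-regular}, and then absorb the parity and rounding losses into the constant $1/6$. Fix client $m$ and an interval $(u,v]$ with $v-u\ge6/\lambda_m$. Because $6/\lambda_m\ge2/\lambda_m$, the lower regularity inequality gives
\[
 N:=N_m(v)-N_m(u)\ge\tfrac12\lambda_m(v-u)\ge3 .
\]
The arrivals in $(u,v]$ carry consecutive global indices $N_m(u)+1,\ldots,N_m(u)+N$. We order tied timestamps by the fixed tie-breaking rule, so the indices are well defined and the arrivals with timestamps in $(u,v]$ form a contiguous block.

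Next we count one parity class in this block. Among $N$ consecutive integers, each parity class has at least $\lfloor N/2\rfloor\ge(N-1)/2$ members. So either the training or the validation stream receives at least $(N-1)/2$ of these arrivals.

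It remains to compare this count with the target. Since $N\ge3$, we have $N-1\ge\tfrac23N$, and therefore
\[
 \frac{N-1}2\ge\frac N3\ge\frac{\lambda_m(v-u)}6 .
\]
This is the claim.

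The only delicate point is that $N$ counts arrivals in the half-open interval, consistent with $N_m$ being a right-continuous count. We also need the boundary ties at $u$ not to break contiguity. Both follow from the fixed total order of arrivals, which we use to define the indices. No upper regularity bound is needed, and I expect no real obstacle beyond this bookkeeping.
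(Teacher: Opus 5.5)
Your proof is correct and follows the paper's argument: the lower regularity bound gives $N\ge\lambda_m(v-u)/2$, each parity class of the contiguous index block contains at least $(N-1)/2$ arrivals, and the rounding loss is absorbed because $\lambda_m(v-u)\ge6$. The only cosmetic difference is the order of the last step: the paper writes $(N-1)/2\ge\lambda_m(v-u)/4-1/2\ge\lambda_m(v-u)/6$, whereas you first use $N\ge3$ to get $(N-1)/2\ge N/3$, and the two are equivalent.
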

\begin{proof}
If the full interval contains $n$ observations, either parity contains
at least $(n-1)/2$ of them, and regularity gives
\[
 \frac{n-1}{2}\ge\frac{\lambda_m(v-u)}4-\frac12
                  \ge\frac{\lambda_m(v-u)}6 .
\]
Only this lower count inequality is needed for the upper analysis; we do
not assert that either thinned schedule is regular with rate
$\lambda_m/3$ in the two-sided sense of
Definition~\ref{def:refresh-regular}.
\end{proof}

Write $\Risk_T^e$ for the expected average current-objective gap of
expert $e$'s whole reported trajectory, including initialization, and
$\Risk_T^{\rm mix}$ for the same risk of the aggregate.

\begin{theorem}[Unknown-drift adaptation over joint memories]
\label{thm:adaptive-memory}
Under Section~\ref{sec:refresh-model}, except that $\gamma$ need not be
known to the learner, and under
\eqref{eq:adaptive-oscillation}--\eqref{eq:adaptive-timing}, the causal
procedure above satisfies
\begin{equation}
\label{eq:adaptive-oracle}
 \Risk_T^{\rm mix}
 \le\min_{1\le e\le E}\Risk_T^e
       +2B\sqrt{\frac{\Delta\log E}{T}}
       +D\bar\gamma\Delta .
\end{equation}
In particular, with $\widetilde b_m=3b_m$,
$\widetilde\Xi^2=\sum_m\widetilde b_m$, and
\[
 \widetilde{\transient}_T
 =\beta_{\rm sgd}D\widetilde\Xi
       \Bigl(\frac2{\sqrt T}+\frac{\Delta}{T\sqrt{s_0}}\Bigr)
       +\frac{GD(s_0+c)}T ,
\]
we have
\begin{align}
\label{eq:adaptive-explicit}
 \Risk_T^{\rm mix}
 &\le \frac{LD^2}{K}
  +D\min_{h\in\grid}
     \Bigl\{2\sum_m a_mh_m
          +\beta_{\rm sgd}\sqrt{\sum_m\widetilde b_m/h_m}\Bigr\}
       \nonumber\\
 &\quad+D\bar\gamma(c+2\Delta)
       +\widetilde{\transient}_T
       +2B\sqrt{\frac{\Delta\log E}{T}} .
\end{align}
The algorithm uses the noise and rate bounds but not the values
$a_m=w_m\gamma_m$ that appear in the oracle bound.
\end{theorem}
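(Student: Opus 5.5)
The plan is to reduce the mixture risk to a standard online-learning regret over the report intervals $\Rint_0,\ldots,\Rint_{R}$, with the validation scores as losses, and to account separately for the gap between validation-time and report-time objectives.

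\emph{Step 1: per-interval decomposition.} Write the risk on $[0,T]$ as the initial segment $[0,t_0)$, which is common to all experts and the mixture because every reported trajectory uses $\xinit$ there, plus the sum over $j$ of $\int_{\Rint_j\cap[0,T]}\E[F^t(x_j)-\min F^t]\,dt$. By convexity of $F^t$, the mixture term on $\Rint_j$ is at most $\sum_e\pi_{j,e}\int_{\Rint_j}\E[F^t(x^e_j)-\min F^t]\,dt$. Define the centered interval-average objective $\overline F_j(x)=\Delta^{-1}\int_{\Rint_j}[F^t(x)-F^t(\xinit)]\,dt$. The quantity to control is then $\Delta\sum_j\E\sum_e\pi_{j,e}\overline F_j(x^e_j)$ against $\Delta\sum_j\E\,\overline F_j(x^{e}_j)$ for a fixed $e$, since the $\min F^t$ and $F^t(\xinit)$ terms cancel in the comparison.

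\emph{Step 2: validation is an unbiased proxy up to source-time drift.} Condition on the training array and on the private weights. The scores $\vscore_{j,e}$ use validation samples in $(t_j,t_{j+1}]$, which are independent of the training data and hence of $x^e_j$. They are also independent of $\pi_{j,e}$, because $\pi_{j,e}$ uses only scores with index at most $j-2$. So $\E[\vscore_{j,e}\mid\cdot]=F_j^{\rm val}(x^e_j)-F_j^{\rm val}(\xinit)$, where $F_j^{\rm val}$ weights the client objectives at the validation arrival times. For $x,\xinit\in\cX$, the drift bound~\eqref{eq:refresh-drift} and the fact that every timestamp lies in an interval of length $\Delta$ give
\[
 \bigl|[f_m^s(x)-f_m^s(\xinit)]-[f_m^t(x)-f_m^t(\xinit)]\bigr|\le\gamma_m D|t-s|.
\]
After averaging in $t$ and weighting over $m$, this yields $|F_j^{\rm val}-F_j^{\rm val}(\xinit)-\overline F_j|\le D\bar\gamma\Delta$. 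I expect the constant to work out to $D\bar\gamma\Delta$ through the integral average of $|t-s|$, but I will accept a small factor. This term produces the $D\bar\gamma\Delta$ in~\eqref{eq:adaptive-oracle}, applied once to the mixture and once to the comparator, possibly sharing a factor $1/2$ each. Lemma~\ref{lem:adaptive-counts} ensures each $n^{\rm val}_{m,j}>0$, since $\Delta\ge6/\lambda_{\min}$. The scores also lie in an interval of length at most $2B$, because $\vscore_{j,e}\in[-B,B]$ by~\eqref{eq:adaptive-oscillation}.

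\emph{Step 3: Hedge regret with delay via two parities.} Each parity instance runs exponential weights on roughly $R/2$ rounds whose losses lie in a range of width $2B$. The weights at round $j$ depend only on rounds up to $j-2$ of the same parity, that is, on fully observed previous rounds. The standard bound gives regret at most $\log E/\etaew+\etaew(2B)^2 (R/2)/8$ per parity. With $\etaew=\frac2B\sqrt{\log E/R}$ and the sum over the two parities, the total is at most about $B\sqrt{R\log E}$. I will check that the constants combine to at most $2B\sqrt{R\log E}$. The regret holds pathwise for any loss sequence; taking expectations and using Step 2 transfers it to $\overline F_j$, and the comparator expert can be fixed in advance. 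Multiplying by $\Delta/T$ and using $R\Delta\le T$ gives
\[
 \frac{\Delta}{T}\,2B\sqrt{R\log E}\le 2B\sqrt{\frac{\Delta\log E}{T}}.
\]
The partial last interval is handled by the same convexity bound.

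\emph{Step 4: explicit form.} Bound $\Risk_T^e$ by Proposition~\ref{prop:refresh-sgd} applied to the training stream. The changes are these: the counts from Lemma~\ref{lem:adaptive-counts} replace the factor $\lambda_m h/2$ by $\lambda_m h/6$, hence $b_m\mapsto3b_m$; $s_0=6K/\lambda_{\min}$ and $h^e_m\ge6K/\lambda_m$ give batches of size at least $1$; and the transient is $\widetilde{\transient}_T$. Adding $D\bar\gamma\Delta$ to $D\bar\gamma(c+\Delta)$ gives $D\bar\gamma(c+2\Delta)$, and taking the minimum over $e$ yields~\eqref{eq:adaptive-explicit}.

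The main obstacle is Step 2's measurability. Experts reuse training samples across solves, and the mixture weights depend on earlier validation data, so I must check that conditioning on the training array and on the earlier validation intervals leaves the current validation samples with their marginal laws. This is where the odd/even independent split and the two-step delay are essential.
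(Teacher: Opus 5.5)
Your proposal is correct and follows the paper's proof essentially step for step. The shared steps are: conditioning on the schedule and the training array, unbiased $\xinit$-centered validation scores with the two-step parity delay, two exponential-weight forecasters analyzed with Hoeffding's lemma on $[-B,B]$, a source-time drift correction between $F_j^{\rm val}$ and $\overline F_j$, convexity for the mixture, and Proposition~\ref{prop:refresh-sgd} with $b_m\mapsto 3b_m$ via Lemma~\ref{lem:adaptive-counts}.

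The constants you flagged resolve as follows. The $\log E/\etaew$ term is paid once per parity, so the Hedge total is exactly $2B\sqrt{R\log E}$, not about $B\sqrt{R\log E}$. Your per-point bound $D\bar\gamma\Delta$, applied to the mixture and again to the comparator, would give $2D\bar\gamma\Delta$. You can either use $\frac1\Delta\int_{\Rint_j}|t-s|\,dt\le\Delta/2$ in each application, or, as the paper does, bound $[F_j^{\rm val}(x)-F_j^{\rm val}(y)]-[\overline F_j(x)-\overline F_j(y)]$ once. Finally, since $T=t_0+R\Delta$, the intervals are $\Rint_0,\ldots,\Rint_{R-1}$ and there is no partial last interval.
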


The selection inequality \eqref{eq:adaptive-oracle} requires only that
the experts' reports lie in $\cX$ and are causal functions of the
training array, independent of the validation observations; it therefore
also applies to fixed-memory empirical-risk minimizers. The explicit
gradient-based bound \eqref{eq:adaptive-explicit} additionally requires
the projected-SGD experts specified above.

\begin{proof}
We first prove the selection inequality, which needs no gradient
concentration event. Condition on the full arrival schedule and on the
entire training array. Every expert point is then fixed, the validation
observations remain mutually independent, and at report $j$ the weight
vector $\pi_j$ depends only on previously used validation intervals and
is independent of the observations in $(t_j,t_{j+1}]$.

Population losses are centered at $\xinit$ before integrating over time;
this removes arbitrary loss offsets, which the assumptions do not bound.
Define the centered average population objective on a reporting interval
and the centered population objective at the validation source times by
\[
 \overline F_j(x)=\frac1\Delta\int_{t_j}^{t_{j+1}}
                         [F^t(x)-F^t(\xinit)]\,dt,
 \qquad
 F_j^{\rm val}(x)=\sum_m\frac{w_m}{n_{m,j}^{\rm val}}
                  \sum_{(u,\xi)\in\mathcal V_{m,j}}
                         [f_m^u(x)-f_m^u(\xinit)] ;
\]
the latter uses only the timestamps $u$. Conditionally on the stated
information and on past validation feedback,
\begin{equation}
\label{eq:adaptive-unbiased}
 \E[\vscore_{j,e}]=F_j^{\rm val}(x_j^e),
 \qquad |\vscore_{j,e}|\le B .
\end{equation}
This conditional expectation evaluates finitely many training-measurable
points on a separate sample array; it asserts no uniform concentration
over queries.

Consider either parity of reporting intervals. The exponential-weight
potential and Hoeffding's lemma for losses in $[-B,B]$ give, for any
expert $e$ and $n$ intervals of that parity,
\[
 \sum_j\Bigl(\sum_v\pi_{j,v}\vscore_{j,v}
                         -\vscore_{j,e}\Bigr)
 \le\frac{\log E}{\etaew}+\frac{\etaew B^2n}{2} :
\]
the one-step upper bound for the log potential ratio is
$-\etaew\sum_v\pi_{j,v}\vscore_{j,v}+\etaew^2B^2/2$, and the terminal weight of
expert $e$ gives the matching lower bound
$-\etaew\sum_j\vscore_{j,e}-\log E$. All earlier losses of a parity are
available before its next report. Adding the inequalities of the two
parities (one of which may be empty) and substituting $\etaew$ yields the
pathwise bound
\begin{equation}
\label{eq:adaptive-hedge}
 \sum_{j=0}^{R-1}\Bigl(\sum_v\pi_{j,v}\vscore_{j,v}
                          -\vscore_{j,e}\Bigr)
 \le 2B\sqrt{R\log E}.
\end{equation}

Source-time drift gives
$\|\nabla F_j^{\rm val}(x)-\nabla\overline F_j(x)\|\le\bar\gamma\Delta$
uniformly on $\cX$, because every source time and every integration time
lie in an interval of length $\Delta$. Consequently, for all $x,y\in\cX$,
\[
 \bigl|[F_j^{\rm val}(x)-F_j^{\rm val}(y)]
       -[\overline F_j(x)-\overline F_j(y)]\bigr|
 \le D\bar\gamma\Delta ,
\]
and convexity of each $F^t$, hence of $\overline F_j$, gives
\[
 \overline F_j(x_j)-\overline F_j(x_j^e)
 \le\sum_v\pi_{j,v}\bigl[F_j^{\rm val}(x_j^v)-F_j^{\rm val}(x_j^e)\bigr]
                      +D\bar\gamma\Delta .
\]
Take expectations, use \eqref{eq:adaptive-unbiased} and
\eqref{eq:adaptive-hedge}, and multiply by $\Delta/T$. The current
minimum cancels in the risk difference, and both procedures report
$\xinit$ before $t_0$. We obtain
\[
 \Risk_T^{\rm mix}-\Risk_T^e
 \le\frac{2B\Delta\sqrt{R\log E}}T
          +\frac{R\Delta}{T}D\bar\gamma\Delta
 \le2B\sqrt{\frac{\Delta\log E}{T}}+D\bar\gamma\Delta .
\]
This holds for every deterministic index $e$, so taking the minimum of
the expected expert risks proves \eqref{eq:adaptive-oracle}.

Finally, Lemma~\ref{lem:adaptive-counts} supplies the only arrival-count
input used in Proposition~\ref{prop:refresh-sgd}, with lower rate
$\lambda_m/3$. Frozen training intervals have length
$\min\{s_j,h_m^e\}\ge6K/\lambda_m$ and contain at least $K$
observations. Each expert retains the original reset and disjoint-batch
filtration, and shared observations between experts impose no
independence requirement on their marginal bounds. The proof of that
proposition therefore applies with $b_m$ replaced by $3b_m$ and with the
initialization time of \eqref{eq:adaptive-timing}. Substituting into
\eqref{eq:adaptive-oracle} proves \eqref{eq:adaptive-explicit}.
\end{proof}

\paragraph{A geometric joint grid.}
Let $\underline h_m=6K/\lambda_m$ and take
\[
 \grid_m=\bigl\{\min(2^i\underline h_m,T):
          i=0,\ldots,\lceil\log_2(T/\underline h_m)\rceil\bigr\},
 \qquad \grid=\prod_m\grid_m .
\]
For every $h_m\in[\underline h_m,T]$, rounding upward to this grid gives
$h_m\le h_m'\le2h_m$, so the minimum in \eqref{eq:adaptive-explicit} is
at most twice the continuous minimum of the same bias--pooled-variance
expression on the box. Moreover $\log E=\sum_m\log|\grid_m|$: the
selection cost depends on this logarithm, whereas maintaining all experts
explicitly can require exponentially many gradient evaluations in $M$.
If the grid has a single point, that expert is used and the selection
penalty is omitted.

For stationary clients the grid contains the full-prefix candidate,
whose statistical contribution vanishes with $T$. The guarantee is
adaptation to a fixed joint memory vector over the stated horizon, not
to a sequence of locally optimal windows, and the implementation must
store the union of the candidate buffers, potentially the full training
history: it gives no storage or computation saving.

\paragraph{Other horizons and scope.}
For an unaligned horizon $T>s_0+c+\Delta$, let
$R=\lfloor(T-s_0-c)/\Delta\rfloor$, use the same learning rate with this
$R$, and keep the last mixture through the remaining fraction of an
interval. The same argument on the $R$ complete intervals gives
\eqref{eq:adaptive-oracle}--\eqref{eq:adaptive-explicit} with one
additional term $GD\Delta/T$, since the unmatched part contributes at
most $GD\Delta$ to the cumulative risk difference; the expert bound is
applied at the actual horizon $T$.

Unknown-drift window selection is available under uniform function-class
convergence assumptions \citep{mazzetto2023adaptive}, and federated drift
adaptation is studied by \citet{ganguly2024nonstationary}. The extension
here is the finite joint-memory oracle inequality with independent
validation, asynchronous source-time correction, and a causal
communication schedule. It does not replace those results, and it does
not establish efficient unrestricted adaptation under the original
pointwise moment assumptions alone.

\clearpage
\providecommand{\refor}[2]{\ifcsname r@#1\endcsname\ref{#1}\else #2\fi}

\section{Clinics with different specialties: when forgetting pays}
\label{app:clinics}

The logistic study of Section~7 marks a boundary: along smooth population paths on which both
the covariates and the label mechanism change, the best common memory is the full history, and
client-specific windows bring no measurable benefit. That result says what the theory does not promise.
This appendix asks the complementary question. What does a network of data sources look like in which
forgetting does pay, and does the memory rule of Corollary~\refor{cor:refresh-opt}{4.2} capture that
benefit when the data are realistic and the solver is Algorithm~\refor{alg:refresh}{1}?

The answer we construct is a regional network of twelve clinics. Its acute services see many patients and
follow epidemic waves; its specialized clinics see few patients from stable populations; and in between,
cardiology drifts slowly as its population ages. The shared risk model is the same for all of them. With
the solver held fixed, client-specific windows lower the time-averaged tracking error by 33\% relative to
the best single memory, and they attain almost nine tenths of what an oracle obtains by tuning every
clinic's window separately.

\subsection{The network}
\label{app:clinics-network}

Table~\ref{tab:clinics} lists the clinics. There are six specialties, each with an urban and a rural site.
All clinics estimate one quantity: the probability that a patient seen at a visit has an emergency or
inpatient encounter within the following 90 days. They differ in three respects that matter for memory.

The \emph{acute services}, pediatrics, the emergency department and primary care, see between 25 and 120
patients a week. Their case mix changes with the season and, for the emergency department, with the waves
of COVID-19. The drift is fast, but so is the flow of new data, so a short memory is affordable. The
\emph{specialized clinics}, endocrinology and oncology, see between one and a half and eight patients a
week, and the populations they serve do not change over the three years we follow. Every discarded record
costs them precision and buys nothing. \emph{Cardiology} is the intermediate case: 15 to 40 patients a week
and a population whose risk profile shifts slowly as it ages, with winter peaks on top. A single memory
length has to choose between these situations; a client-specific memory does not.

\begin{table}[!ht]
\centering
\caption{The twelve clinics. The surge cohort is defined by a variable the model does not observe. Urban
and rural sites follow surveillance curves from different regions. All clinics have weight $w_m=1/12$.}
\label{tab:clinics}
\small
\setlength{\tabcolsep}{4pt}
\renewcommand{\arraystretch}{1.15}
\begin{tabular}{@{}>{\raggedright\arraybackslash}p{0.13\linewidth}
                   >{\raggedright\arraybackslash}p{0.25\linewidth}
                   >{\raggedright\arraybackslash}p{0.17\linewidth}
                   >{\raggedright\arraybackslash}p{0.27\linewidth}
                   >{\raggedright\arraybackslash}p{0.11\linewidth}@{}}
\toprule
Specialty & Patients seen & Surge cohort & Shape of the drift & Visits per week (urban / rural) \\
\midrule
Pediatrics    & age below 18 & recent acute care & ILINet, youngest age group & 60 / 25 \\
Emergency     & emergency encounters & recent acute care & COVID-19 hospital admissions & 120 / 40 \\
Primary care  & adults at wellness, ambulatory and outpatient visits & recent acute care & weighted ILINet activity & 80 / 30 \\
Cardiology    & age 40 or older with hypertension, heart failure or coronary disease & recent acute care & slow ageing trend with winter peaks & 40 / 15 \\
Endocrinology & diabetes or prediabetes & none & stationary & 8 / 3 \\
Oncology      & active cancer diagnosis & none & stationary & 3 / 1.5 \\
\bottomrule
\end{tabular}
\end{table}

\subsection{Patients, outcome, and what the model does not see}
\label{app:clinics-data}

The patients are synthetic electronic health records generated with Synthea
\citep{walonoski2018synthea}. Every ambulatory, wellness, outpatient, urgent-care or emergency visit in the
last eight years of the simulated period, with at least 90 days of follow-up, is a candidate index visit; at most four
are kept per patient. The outcome of a visit is an emergency or inpatient encounter within the next 90
days. The features are those available when the patient is seen: age, sex, body-mass index, the latest
systolic and diastolic blood pressure within three years, indicators of hypertension, diabetes or
prediabetes, asthma, chronic obstructive pulmonary disease, heart failure, coronary disease, cancer and
chronic kidney disease active at the visit, and indicators of missing measurements. A fifth of the
patients is set aside once and used only to fit a public starting model; none of them appears in any
clinic's data.

The design turns on one choice. Drift can help a forgetting rule only if it moves the \emph{optimum}. If
the patients who become more frequent during a wave were distinguished by the features, the change would
be a covariate shift, and a logistic model that already accounts for those features would barely need to
move. We therefore define the surge cohort by a variable the model never receives: an emergency or
inpatient encounter in the 90 days \emph{before} the visit. In the generated population, patients with
such a recent episode are several times as likely to return to acute care in cardiology and in the
emergency department, less so in primary care, and only mildly so among children. When a respiratory wave brings
more of them to a clinic, the same observed features carry a higher risk, and the optimal parameter follows
the wave. This is concept drift as the model experiences it, and its speed differs from clinic to clinic.

For every specialty the eligible visits of the remaining patients form two finite cohort banks, a
baseline bank and a surge bank, each holding up to 3000 records; the stationary specialties have a single
bank. A clinic's population at time $t$ is the mixture
\begin{equation}
\label{eq:clinics-mixture}
(1-p_m(t))\,\mathrm{baseline}_m+p_m(t)\,\mathrm{surge}_m,
\end{equation}
and each arrival draws one record from the corresponding bank. Because the banks are finite, the current
objective, its minimizer, and the drift and noise of every clinic can be computed exactly at any time.

\subsection{How the patient mix changes}
\label{app:clinics-drift}

The shares $p_m(t)$ follow public surveillance data retrieved from the Delphi Epidata service
\citep{reinhart2021covidcast}
(Figure~\ref{fig:clinics-scenario}a). Pediatrics follows the share of influenza-like-illness visits among
the youngest children in the {CDC} {ILINet} network \citep{cdc-ilinet}, nationally for the urban site and for HHS Region~6 for the
rural one. Primary care follows weighted ILINet activity in HHS Regions~1 and~9. The emergency department
follows weekly COVID-19 hospital admissions reported to {HHS} \citep{hhs-covid-hosp} in New York and in Montana, whose waves arrived
at different times: the rural site peaks in autumn 2020 and during the Delta wave, the urban one in winter
2020--21 and during the Omicron wave. Cardiology combines an S-shaped ageing trend with winter peaks whose
timing is taken from the regional ILINet curve, as cardiovascular admissions rise in winter. Influenza and
cardiology curves start in October 2016 and COVID-19 curves in August 2020; each is smoothed over three
weeks and rescaled to the range of shares given for its specialty. Time is measured in weeks, the horizon
is $T=156$, and patients arrive at the regular times $n/\lambda_m$ of Definition~\refor{def:refresh-regular}{3.1}.

\begin{figure}[t]
\centering
\includegraphics[width=\linewidth]{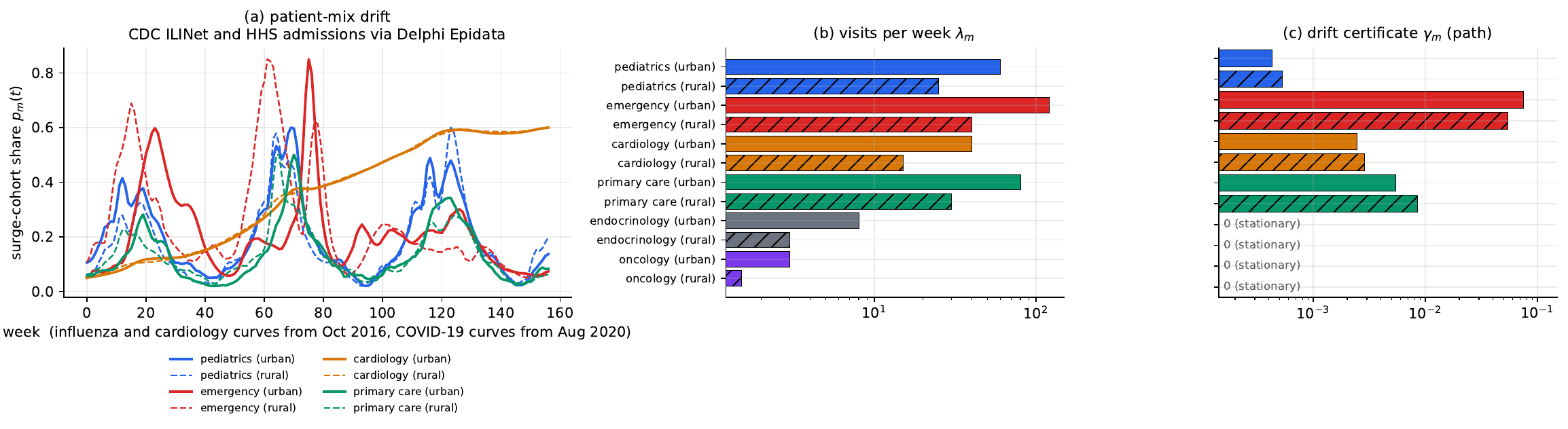}
\caption{The network. (a) Share of the surge cohort in each drifting clinic; solid lines are urban sites,
dashed lines rural ones. (b) Visits per week. (c) Drift bound $\gamma_m$ along the optimal trajectory; the
four specialized clinics are stationary. Drift speeds span two orders of magnitude and are not ordered
like the visit rates, which is what makes a single memory length a compromise.}
\label{fig:clinics-scenario}
\end{figure}

\subsection{Model, objective, and exact quantities}
\label{app:clinics-model}

The model is a logistic regression with coefficients shared by all clinics and one intercept per clinic,
the centre effect of multicentre studies. With $z$ the observed features of a record and $e_m$ the indicator
of clinic $m$, the parameter $\theta$ enters through $\theta^\top(z,\,0.8\,e_m)$, and the features are scaled
so that the whole input has norm at most one. The tracked objective is
\begin{equation}
\label{eq:clinics-objective}
F^t(\theta)=\sum_{m=1}^{12}w_m\bigl[(1-p_m(t))\,L_m^{\mathrm{base}}(\theta)+p_m(t)\,L_m^{\mathrm{surge}}(\theta)\bigr]
+\frac{\rho}{2}\|\theta\|^2,\qquad \rho=0.005,
\end{equation}
where $L_m^{\mathrm{base}}$ and $L_m^{\mathrm{surge}}$ are the mean logistic losses of the two banks of clinic
$m$. Every minimizer lies in the ball of radius $1/\rho=200$, which serves as the domain. The clinic
intercept localizes a clinic's drift: when the surge share of the emergency department rises, the
optimum moves mainly in that clinic's intercept, while the shared coefficients adjust only as far as the
other clinics allow.

Since~\eqref{eq:clinics-objective} is a finite sum, the minimizer $\theta^\star_t$ and the value
$\min F^t$ are computed exactly on a grid of four points per week, and the gap of any reported parameter
is evaluated without Monte Carlo error. The memory rule needs the quantities of Theorem~\refor{thm:refresh-upper}{4.1}:
the drift bound $\gamma_m$ and the gradient noise $\sigma_m^2$ of every clinic. Bounds that hold uniformly over
the parameter ball are valid but loose by one to two orders of magnitude, and unevenly so across clinics.
We therefore evaluate both exactly along the optimal trajectory,
\begin{equation}
\label{eq:clinics-certificates}
\gamma_m=\max_t\,|p_m'(t)|\,\bigl\|\nabla L_m^{\mathrm{surge}}(\theta^\star_t)-\nabla L_m^{\mathrm{base}}(\theta^\star_t)\bigr\|,
\qquad
\sigma_m^2=\max_t\,\mathrm{Var}_m^t\bigl[\nabla\ell(\theta^\star_t)\bigr],
\end{equation}
where the variance is that of a single-record gradient under the clinic's mixture at time $t$. These are
the drift and noise the algorithm actually meets near the optimum, and they are the inputs of
$a_m=w_m\gamma_m$, $b_m=w_m^2\sigma_m^2/\lambda_m$ and of the windows of Corollary~\refor{cor:refresh-opt}{4.2}.
Figure~\ref{fig:clinics-scenario}c shows the resulting drift bounds: the emergency department drifts
fastest, followed by primary care and cardiology, while the hidden cohort moves children's risk only
slightly.

\subsection{Memories, solvers, and evaluation}
\label{app:clinics-protocol}

Five memories are compared. \emph{One common window} gives every clinic the same memory length, the
comparator of the theory. \emph{Common window with a data floor} uses one length but never lets a clinic's
window hold fewer than 100 of its records, which protects the logistic fit of small clinics from buffers
with no events; it is a stronger practical baseline. \emph{Full history} keeps everything.
\emph{Client-specific windows} use the shape $h_m\propto\sqrt{b_m/a_m}$ of Corollary~\refor{cor:refresh-opt}{4.2},
with the full history for stationary clinics and the same data floor. The \emph{per-clinic oracle} starts
from the client-specific windows and adjusts each clinic's window separately by a coordinate search; it
is an upper benchmark, not a method.

Each memory is combined with two solvers. Exact minimization of the buffered empirical risk isolates the
quality of the memory. Algorithm~\refor{alg:refresh}{1} is the method of the paper: $K=8$ accelerated
rounds on disjoint batches, restarted at every solve from a public point, with solves every week and a
round duration of $1/16$ week. Its public point is the model fitted once on the held-out pilot patients.
Every memory has one free scale, selected on six tuning streams; the evaluation uses twenty further
independent streams, on which all memories receive the same patients at the same times. Because the
objective is exact, the scale is selected by the true tracking error; this compares memories at their
best scale and is not a data-driven procedure for choosing windows. The score is the time-averaged gap
$F^t(x_t)-\min F^t$ after the first eleven weeks, during which every memory reports the same public model.

\subsection{Results}
\label{app:clinics-results}

Figure~\ref{fig:clinics-gain} gives the headline. With exact minimization, client-specific windows reduce
the tracking error from $3.36\cdot10^{-3}$ to $2.26\cdot10^{-3}$, by 33\% relative to one common window;
the paired difference over the twenty evaluation streams is $-1.10\cdot10^{-3}$ with a 95\% half-width of
$8.7\cdot10^{-5}$. The best common window is the full history, so the same 33\% is the gain over keeping
everything. Against the common window with a data floor the reduction is 26\%. The per-clinic oracle reaches
$2.13\cdot10^{-3}$; the rule of Corollary~\refor{cor:refresh-opt}{4.2} thus obtains 89\% of the
improvement that separate tuning of all twelve windows achieves.

\begin{figure}[t]
\centering
\includegraphics[width=\linewidth]{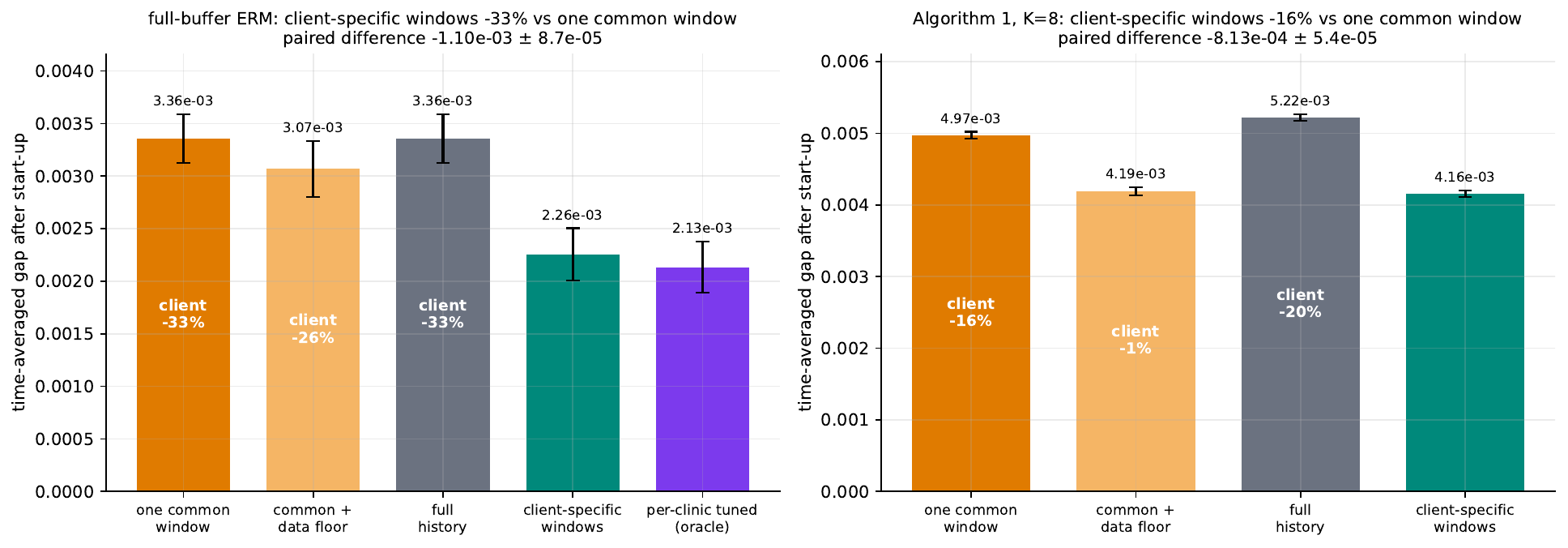}
\caption{Time-averaged gap after start-up, mean over twenty evaluation streams with 95\% intervals. Left:
exact minimization of the buffered risk. Right: Algorithm~\refor{alg:refresh}{1} with $K=8$. The label in
each baseline bar is the change obtained by client-specific windows relative to that baseline.}
\label{fig:clinics-gain}
\end{figure}

Figure~\ref{fig:clinics-windows} shows why. Tuned as one number, the common memory settles on the whole
history: most of the network is stable or slow, and any shorter single window would add noise to all of
it. Client-specific windows keep the whole history for the four specialized clinics and forget quickly
where the optimum moves fast and data are plentiful: about one week for the urban emergency department,
two for the rural one, three to four weeks for primary care, and between seven and fourteen weeks for
pediatrics and cardiology. The oracle agrees on the emergency departments and on urban cardiology. It
parts from the prescription on pediatrics and primary care, where it returns to the full history, and on
rural cardiology, where it chooses a window twice as long; we return to this below.

\begin{figure}[t]
\centering
\includegraphics[width=\linewidth]{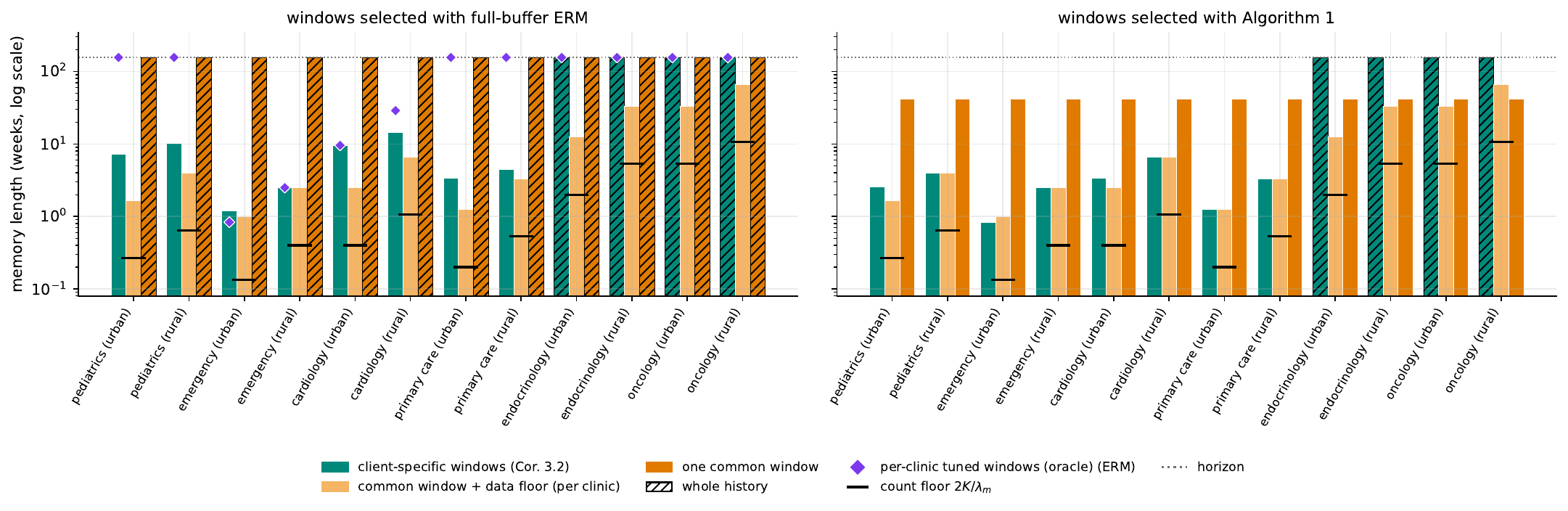}
\caption{Memory lengths selected for each clinic. Hatched bars mean that the whole history is kept.
Diamonds mark the per-clinic oracle, dashes the minimum length $2K/\lambda_m$ needed to form $K$ batches.}
\label{fig:clinics-windows}
\end{figure}

The time course in Figure~\ref{fig:clinics-time} shows where the error is made. The full history is
worst during the waves that reach the emergency departments, around weeks 20 and 60--80, when its error
rises to about $10^{-2}$, and again in the last year, when cardiology keeps ageing and every old record
drifts further from the present. Client-specific windows stay close to the oracle throughout and remain
near $1.3\cdot10^{-3}$ from week 90 on. The full history is better only for short spells between waves,
when the current mix happens to pass through its long-run average. The common window with a data floor
tracks the waves, but pays a constant excess in the calm periods, because its short common length also
shortens the memory of clinics that did not need it.

\begin{figure}[t]
\centering
\includegraphics[width=\linewidth]{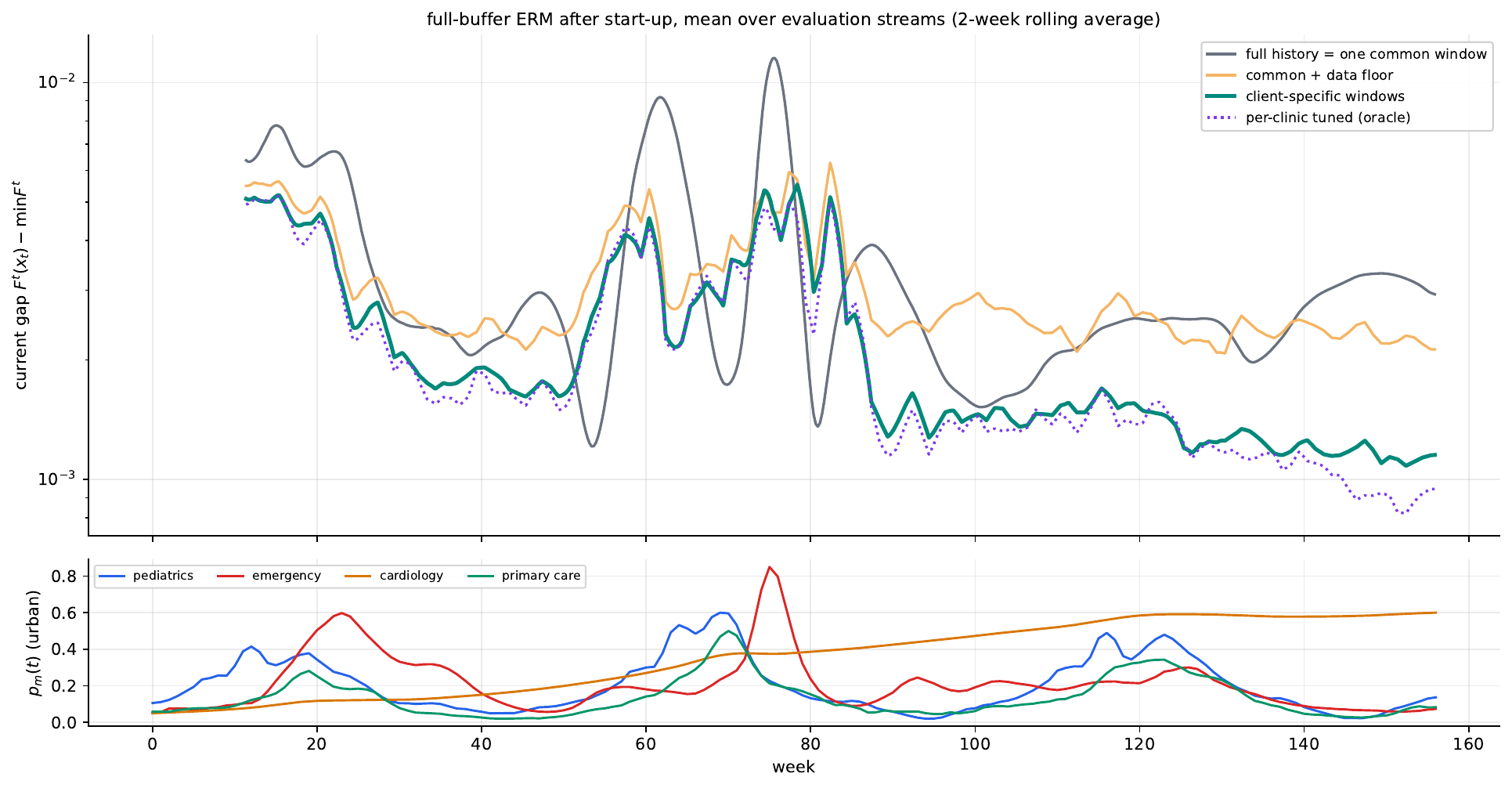}
\caption{Current gap over time under exact minimization, after start-up; two-week rolling mean over the
evaluation streams. The lower panel repeats the surge shares of the urban clinics. The full history and
the one common window coincide because the common window selects the whole history.}
\label{fig:clinics-time}
\end{figure}

Figure~\ref{fig:clinics-perclinic} splits the gap into the contributions
$w_m[f_m^t(x_t)-f_m^t(\theta^\star_t)]$ of the individual clinics. Almost all of the gain comes from the two
emergency departments, whose contributions fall from about $10^{-3}$ and $1.6\cdot10^{-3}$ to about
$3.6\cdot10^{-4}$ and $6\cdot10^{-4}$, a reduction near two thirds. Several contributions are negative. The joint
optimum $\theta^\star_t$ minimizes the sum over clinics, not each clinic's loss, and through the shared
coefficients it bends towards the emergency departments during their waves; a memory that ignores those
waves therefore fits pediatrics, cardiology and primary care slightly better than the optimum does, at a
larger cost elsewhere. The data-floor baseline shows the other side of the trade-off: it helps the
emergency departments as much as the client-specific rule, but its short windows make the other drifting
clinics positive.

\begin{figure}[t]
\centering
\includegraphics[width=\linewidth]{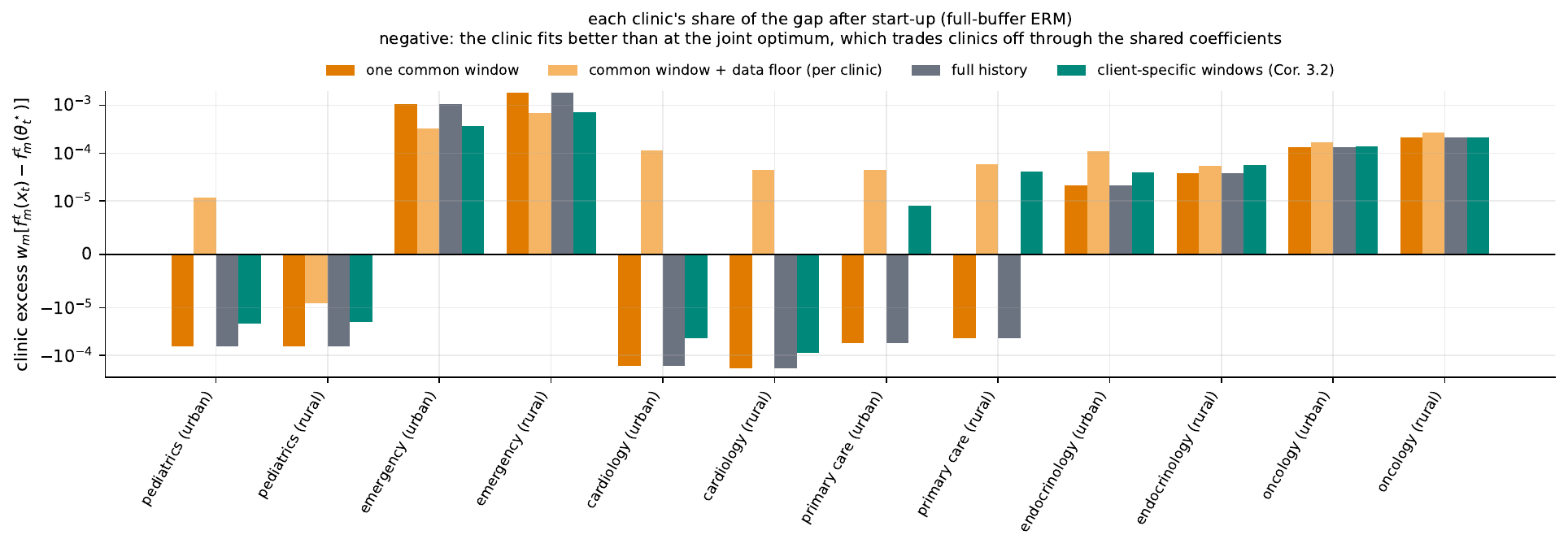}
\caption{Contribution of each clinic to the gap after start-up under exact minimization, on a symmetric
logarithmic scale. Negative values mean that the clinic fits better than at the joint optimum.}
\label{fig:clinics-perclinic}
\end{figure}

With Algorithm~\refor{alg:refresh}{1} every error is larger, between $4.2\cdot10^{-3}$ and
$5.2\cdot10^{-3}$, because eight accelerated rounds do not reach the minimizer of the buffered risk. The
ordering remains: client-specific windows reduce the gap by 16\% relative to the best common window, which
now keeps about forty weeks, and by 20\% relative to the full history; the paired difference is
$-8.1\cdot10^{-4}$ with a 95\% half-width of $5.4\cdot10^{-5}$. Against the common window with a data floor
the two memories are within one percent of each other.

\subsection{Reading the results through the theory}
\label{app:clinics-theory}

Theorem~\refor{thm:refresh-upper}{4.1} charges a memory $h_m$ through two terms per clinic, a drift cost
growing like $a_mh_m$ and a statistical cost falling like $\sqrt{b_m/h_m}$. Their balance is struck at
$h_m\propto\sqrt{b_m/a_m}$, and it moves in opposite directions across this network: $b_m$ is largest in the
small specialized clinics, $a_m$ largest in the emergency departments. One common length has to serve both
ends. Here it is pushed to the full history, because the stable clinics and the slowly drifting ones form
most of the network, and the emergency departments then carry the drift cost of every wave. This is
precisely the situation in which the statistical-drift scale $\mathcal H_\gamma$ of the client-specific
rule falls well below the common-memory scale, since the drift--noise masses $\sqrt{a_mb_m}$ are spread over
very different ratios $b_m/a_m$.

The comparison with the data-floor baseline isolates what the rule adds. The floor $100/\lambda_m$ already
makes the memory depend on the visit rate, which is one half of the prescription. It does not depend on
drift. Under exact minimization this is decisive: the floored common window shortens the memory of
pediatrics, cardiology and primary care to the same few weeks as the emergency departments and pays for
it in noise, whereas Corollary~\refor{cor:refresh-opt}{4.2} lengthens their windows in proportion to
$\gamma_m^{-1/2}$. The remaining 26\% is the value of accounting for drift, clinic by clinic.

Where the prescription and the oracle part, the reason is the shape of the drift. The theorem is built for
a worst-case drift that can keep moving in one direction, so the bias of an old record grows without
bound. Seasonal drift returns to its average. For pediatrics and primary care, whose shares oscillate
around a stable level, the bias of a long memory saturates at the amplitude of the season, and with a
moderate flow of patients the full history is the better compromise. Cardiology, whose ageing trend does not revert, is the opposite case, and
there the oracle keeps a finite window as the theory prescribes. The prescription is thus conservative
exactly where the drift is mean-reverting, and it still captures most of the attainable gain.

The solver changes the size of the effect but not its sign. Proposition~\refor{prop:refresh-sgd}{B.1}
makes precise that the memory scale does not depend on acceleration; what the solver contributes is the
optimization remainder, which with $K=8$ is comparable to the statistical term and dilutes every
difference between memories. Under Algorithm~\refor{alg:refresh}{1} the two memories that shorten the
emergency departments' windows, the data-floor baseline and the client-specific rule, therefore end up
close, while both remain clearly better than any memory that treats the network as one.

Section~7 and this experiment differ in how the optimum moves. There the population changes smoothly in
both covariates and labels, and the best common memory is the full history. Here the optimum is moved by
a cohort the model cannot see, at speeds that differ by two orders of magnitude from
clinic to clinic. Heterogeneous drift bounds justify different memories in both settings; only in the
second does the difference between memories carry a large price, and the client-specific rule then pays
off by a third of the error. Client-specific memory also holds far fewer records: the emergency
departments keep a week or two of patients instead of three years, so every solve touches a small
fraction of the data that the full history requires.

\subsection{Scope}
\label{app:clinics-scope}

The records are synthetic, and each clinic's population is a mixture of two fixed cohorts whose shares
follow real surveillance curves; the experiment tests the memory mechanism of the paper, not a clinical
prediction rule. The drift and noise bounds are evaluated along the exact optimal trajectory, which is
possible only because the objective is known; in deployment they would have to be certified or replaced
by the validation-based selection of Section~6. The single free scale of each memory is chosen with the
true tracking error, for every memory alike. Algorithm~\refor{alg:refresh}{1} is used unchanged, with a
public starting point fitted once on patients who never enter the streams.

\section{Station temperature records}
\label{app:station-temperature}

This appendix runs Algorithm~\ref{alg:refresh} on laws fitted to
measured temperature records; Figure~\ref{fig:station-main} in
Section~\ref{sec:refresh-exp} summarizes it. Two experiments are
reported. The first is inside the model of
Section~\ref{sec:refresh-model}: day-to-day weather is independent noise
around a fitted diurnal clock, the drift and noise bounds are attained
by construction, and Corollary~\ref{cor:refresh-opt} is scored against a
common window. The second replays held-out months in their recorded
order, where weather persists, and asks whether the same corollary
improves on a common length once its drift input matches that path.

\paragraph{Setup.}
Unless stated otherwise the runs use the scalar loss
$\ell(x;\xi)=(L/2)x^2+\xi x$ with $L=10^{-3}$ on $[-1/2,1/2]$, so $D=1$,
where $\xi$ is a scaled reading, and the raw-buffer implementation of
Section~\ref{sec:refresh} with $\tau=0.02$ hours. Weights are $1/3$.
With $\mu_m(t)$ the path of station $m$, the population gradient
$Lx+\theta(t)$, $\theta(t)=\sum_m w_m\mu_m(t)$, is the network path up
to a negligible curvature term, and the minimizer lies at an endpoint
of $[-1/2,1/2]$ whenever $|\theta(t)|>L/2$; the tables call this loss
``endpoint''. The curvature is kept small so that the optimization
remainder is negligible: the number of rounds is
$K=\lceil\max\{1,2\sqrt{LD/\Hg}\}\rceil=1$, the stride is
$\Delta=K\tau$, and $s_0=2K/\lambda_{\min}$, the choices of
Corollary~\ref{cor:refresh-joint}. With $K=1$ the accelerated solve and
averaged SGD coincide, so comparisons are between memories.

Client-specific windows are those of Corollary~\ref{cor:refresh-opt},
without a multiplier (labelled ``formula'' in the plots). The
bound-optimal common length minimizes the common-memory bound of
Appendix~\ref{app:benchmarks}; the tuned common length is chosen on
seeds $0$--$3$ from a fixed grid of common lengths.
Seeds $4$--$15$ are evaluation, and within a seed every policy sees the
same draws. The predicted gain is
$g=1-\Hg/(\bar\gamma^{1/3}\Xi^{2/3})$, the relative gap in the
comparison $\Hg\le\bar\gamma^{1/3}\Xi^{2/3}$ of
Section~\ref{sec:refresh-model}, computed before count floors are
imposed. The score inside the model is the population gap $\avgerr_T$.
On time-ordered months it is the prequential excess over the zero
report: the average, over the recorded readings, of the loss of the
current report on each reading as it arrives, minus the loss of the
report $x=0$ on the same reading; lower is better.

\subsection{Experiment I: inside the model}
\label{app:station-model}

The U.S. Climate Reference Network \citep{diamond2013uscrn} reports air
temperature every five minutes. We take the ten Decembers 2010--2019 at
three stations chosen by insolation before any simulation: Stovepipe
Wells (desert), Sitka (maritime), and Utqiagvik, where the sun does not
rise in December (polar night). The population path of a station is
the truncated Fourier series of its average day;
leave-one-December-out cross-validation selects $2$, $3$, and $0$
harmonics, so the polar-night path is constant. Paths are centred, and
paths and residuals are divided by one common scale,
$4.56^\circ\mathrm{C}$; the bounds below are quoted in degrees, before
this scaling, and the windows do not depend on it. The drift bounds are the
largest slopes of the fitted paths, $(1.58,0.354,0)^\circ\mathrm{C}$ per
hour. A draw at a slot is a reading of a uniformly chosen December day
at that slot, minus the slot mean, plus the fitted path. The noise
scales $(3.41,3.72,6.64)^\circ\mathrm{C}$ are the largest slot standard
deviations. Each station reports $\lambda=12$ draws per hour.

\begin{center}
\includegraphics[width=\linewidth]{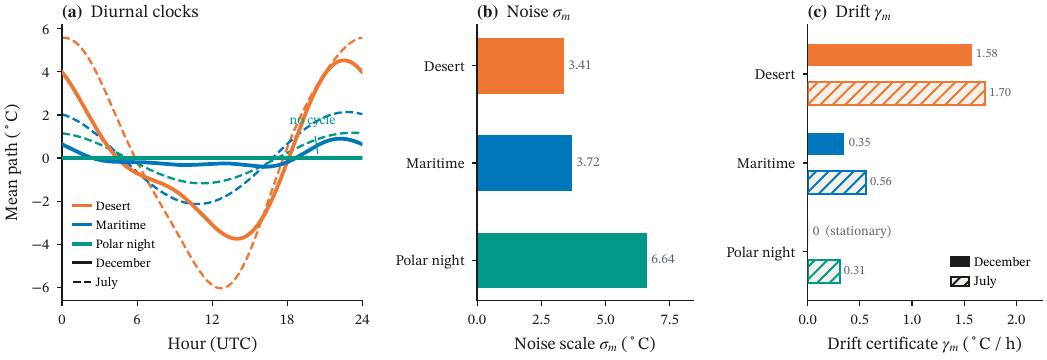}
\captionof{figure}{The two station laws. (a)~Mean paths in UTC,
December solid and July dashed; polar night has no December cycle.
(b)~Noise scales $\sigma_m$. (c)~Drift bounds $\gamma_m$; polar night
is stationary in December and moving in July.}
\label{fig:station-setup}
\end{center}

On this law $\Hg=0.111$. The prescribed windows are
$(0.556,1.28,\infty)$ hours. The bound-optimal common length is $1.01$
hours, the tuning seeds select $0.972$ hours, and $g=0.32$. The count
floor $2K/\lambda=1/6$ hour does not bind, $K\Cg\le\Hg$, and $T=432$
hours exceeds both joint-regime thresholds, so every hypothesis of
Corollary~\ref{cor:refresh-joint} used by the upper guarantee holds.

The client-specific mean is $0.0120$. Against the full history the
paired difference is $-0.0835$ with interval $[-0.0843,-0.0827]$, an
$87\%$ reduction. Against the bound-optimal common length it is
$-0.0125$ with interval $[-0.0139,-0.0112]$, a $51\%$ reduction, and
against the tuned common length it is $-0.0128$ with interval
$[-0.0144,-0.0113]$, a $52\%$ reduction. The client-specific windows
are below both common lengths on all twelve seeds. Per-station tuning,
shown for reference in Figure~\ref{fig:station-compare-law}, reaches
$0.0097$, so the untuned windows recover about $84\%$ of the
improvement over the bound-optimal common length that separate tuning
attains. A single length cannot give the desert a half-hour window and
polar night its full history at once.

The same three stations in July have harmonics $(3,1,1)$: the sun now
rises at all three. Then $g=0.066$ and the windows are
$(0.472,0.645,1.40)$ hours. The client-specific mean is $0.0128$, a
$6\%$ reduction against the bound-optimal common length, close to the
predicted $g$ but within noise (paired difference
$-7.7\cdot10^{-4}\pm8.2\cdot10^{-4}$), and an $11\%$ reduction against
the tuned common length. The Beijing July control groups twelve
districts into three shared-clock clients, $g=0.0002$, and the
client-specific windows tie the common length.

\begin{center}
\includegraphics[width=\linewidth]{figures/station_compare_law.pdf}
\captionof{figure}{Mean $\avgerr_T$ over seeds $4$--$15$ with Student
$95\%$ intervals. In December client-specific windows cut the gap by
$51\%$ against the bound-optimal common window; in July, whose clocks
are closer, by $6\%$, within noise. The subtitle of each panel records
the paired difference and the reductions against the tuned common
length and the full history.}
\label{fig:station-compare-law}
\end{center}

\begin{center}
\includegraphics[width=\linewidth]{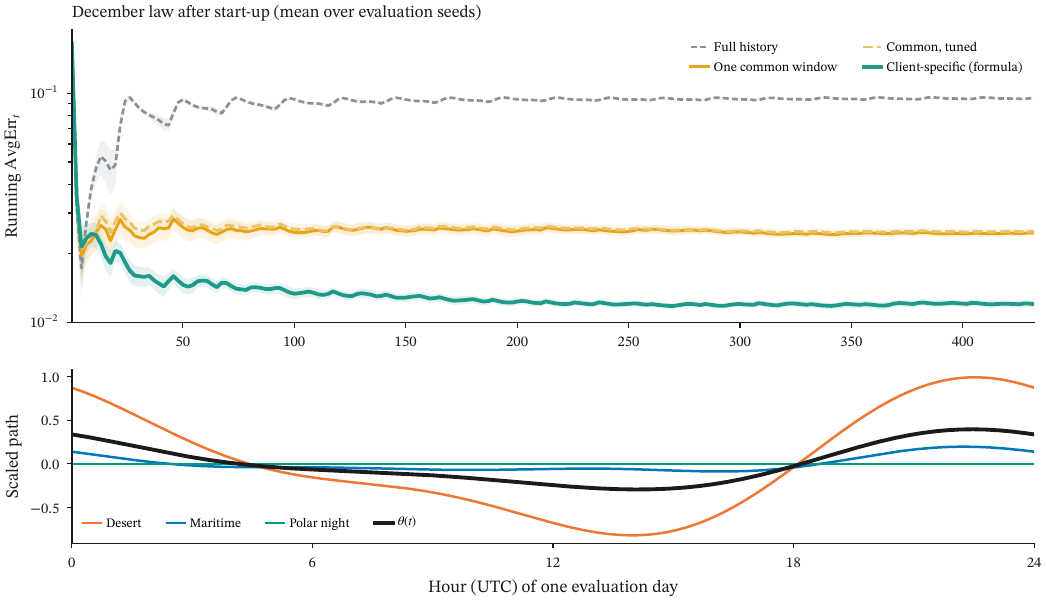}
\captionof{figure}{Top: running $\avgerr_t$ on the December law, mean
and one standard error over seeds $4$--$15$. Client-specific windows
separate from a common length within a day and stay below it. Bottom:
the three station means and the network path
$\theta(t)=\sum_m w_m\mu_m(t)$ on one evaluation day.}
\label{fig:station-gap-roll}
\end{center}

Over $200$ distinct random triplets from the $134$ network stations
with enough December coverage, the measured relative gain of
client-specific windows over the bound-optimal common length grows with
the predicted $g$ (Spearman $0.73$, bootstrap interval $[0.64,0.80]$)
and, for $g\ge5\%$, exceeds it on average: the comparison of upper
bounds is a conservative predictor. For the $31$ triplets with
$g\ge5\%$ the client-specific windows are below the bound-optimal
common length in all and below the tuned common length in $29$. When
the clocks agree, $g$ is small and a tuned common length can beat the
conservative drift input by a few percent.

\begin{center}
\includegraphics[width=\linewidth]{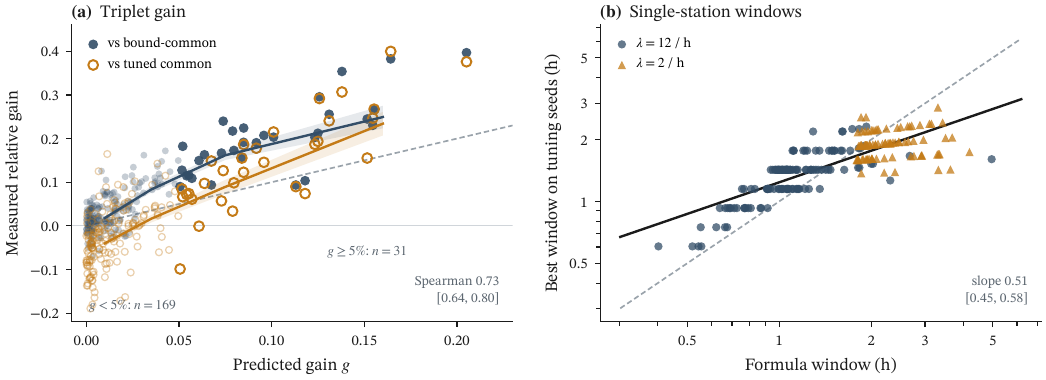}
\captionof{figure}{Left: measured relative gain of client-specific
windows over the bound-optimal and the tuned common length for $200$
random station triplets, against the predicted gain $g$. Opaque points
have $g\ge5\%$; curves are binned means with one standard error; the
dashed line is equality. Right: best window on four tuning seeds
against the prescribed window for single stations at two reading
rates; the log--log slope is $0.51$, not $1$, consistent with the
prescription charging the steepest slope rather than the typical one.}
\label{fig:station-network}
\end{center}

\subsection{Experiment II: time-ordered records}
\label{app:station-order}

The resampled law treats day-to-day weather as independent noise. To
see what that choice hides, each December is held out in turn; the law
is fitted on the other nine years; and the held-out month is replayed
in its recorded order for $T=744$ hours. A common length is tuned on
the nine training months replayed the same way. The climatological
windows, which treat weather as noise, lose to that tuned length in all
ten years: mean $-0.167$ against $-0.238$, paired difference $0.071$
with interval $[0.037,0.104]$. They keep the full history at the polar
site, whereas exponentially weighted memories tuned per station on the
training years keep less than an hour there: in recorded order,
persistent weather acts as drift.

Corollary~\ref{cor:refresh-opt} takes $(\gamma_m,\sigma_m)$ as inputs.
Two other certificates are scored on the same folds, both fixed on the
training folds before their held-out scores were computed.

The stream-scale pair uses five-minute differences for noise and the
largest hourly-mean slope for drift. Polar night is then the fastest
client. Every window binds at the count floor $2K/\lambda=1/6$ hour,
personalization vanishes, and the score is $-0.231$ against $-0.238$
for the tuned length.

The two-scale pair keeps the climatological noise and sets $\gamma_m$
to the sum of the clock Lipschitz constant and the largest change of
successive $12$-hour block means of the climate residual, per hour, a
scale above the hour-scale windows and below a day. Polar night is no
longer stationary. The prescribed windows, averaged over the training
folds, are $(0.356,0.665,1.11)$ hours, $g=0.078$, and the count floor
does not bind.

\begin{center}
\includegraphics[width=\linewidth]{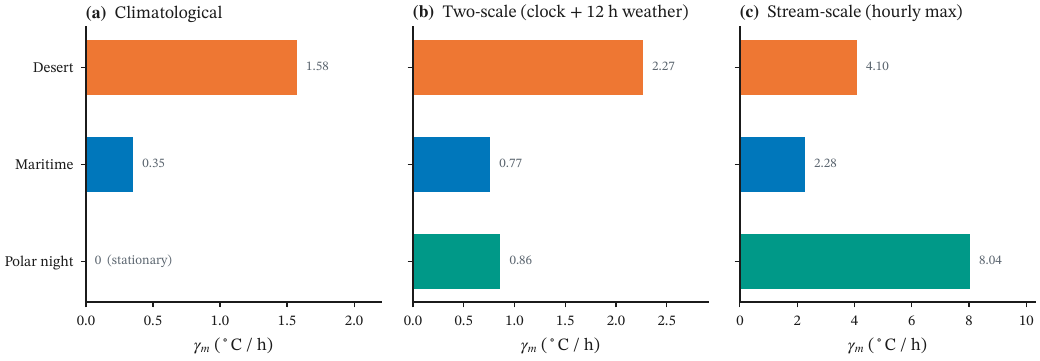}
\captionof{figure}{Drift certificates on the December training folds.
(a)~Climatological slopes: polar night is stationary.
(b)~Two-scale: clock plus the $12$-hour residual Lipschitz constant.
(c)~Stream-scale hourly maxima, the opposite extreme.}
\label{fig:station-certs-three}
\end{center}

\begin{center}
\includegraphics[width=\linewidth]{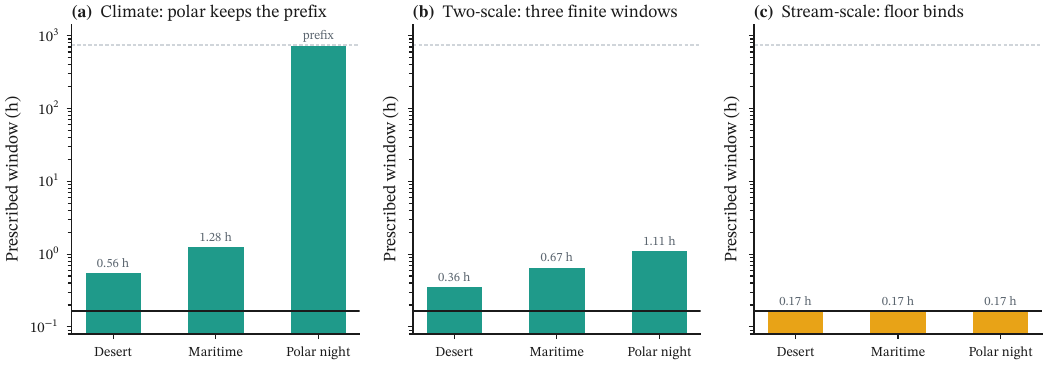}
\captionof{figure}{Windows the three certificates prescribe. Climate
keeps the full history at the polar site. Two-scale gives three finite
lengths $(0.36,0.67,1.11)$ hours. Stream-scale binds at the count
floor, so personalization vanishes.}
\label{fig:station-windows-three}
\end{center}

On the held-out Decembers the two-scale windows have mean $-0.2385$
against $-0.2382$ for the common length tuned on the other nine years,
a paired difference of $-0.0003$ with interval $[-0.0008,0.00008]$,
and are lower in seven of the ten years. Against the climatological
windows they win in all ten years, by $0.071$ with interval
$[0.038,0.105]$. Against the stream-scale floor they win in all ten, by
$0.0075$ with interval $[0.0061,0.0089]$. Squared loss repeats the
pattern.

On the ten Julys the same two-scale certificate matches the tuned common
length: it is lower in all ten years, by $0.00022$ with interval
$[0.00011,0.00033]$, about $0.06\%$ of the score. In July every station
has a diurnal clock, so the climatological windows are already next to
the tuned length. The two-scale windows use no grid: only the
training-fold certificate.

\begin{center}
\includegraphics[width=\linewidth]{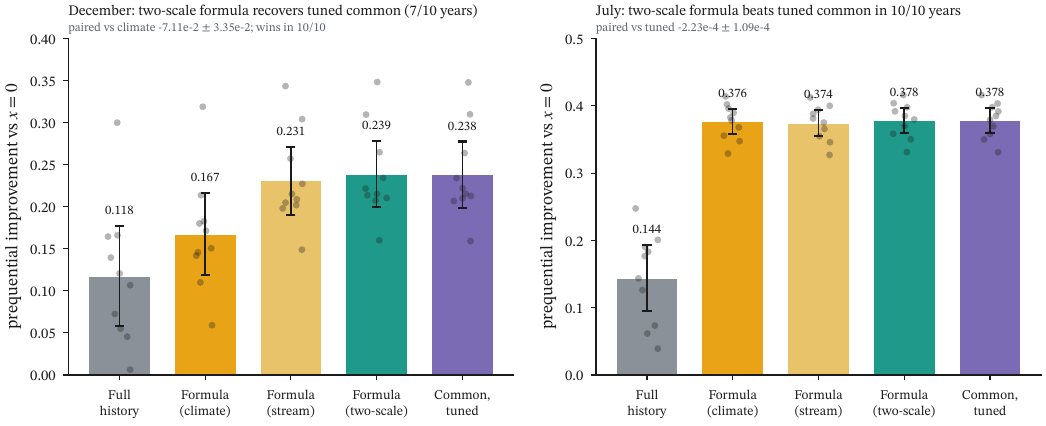}
\captionof{figure}{Prequential improvement over the zero report,
December (left) and July (right); the improvement is the negative of
the prequential excess reported in the text and in
Table~\ref{tab:station-laws}. Points are years; bars are means with
Student $95\%$ intervals. Higher is better. The two-scale windows
recover the tuned common length in December and match it in July.}
\label{fig:station-real-win}
\end{center}

\begin{center}
\includegraphics[width=\linewidth]{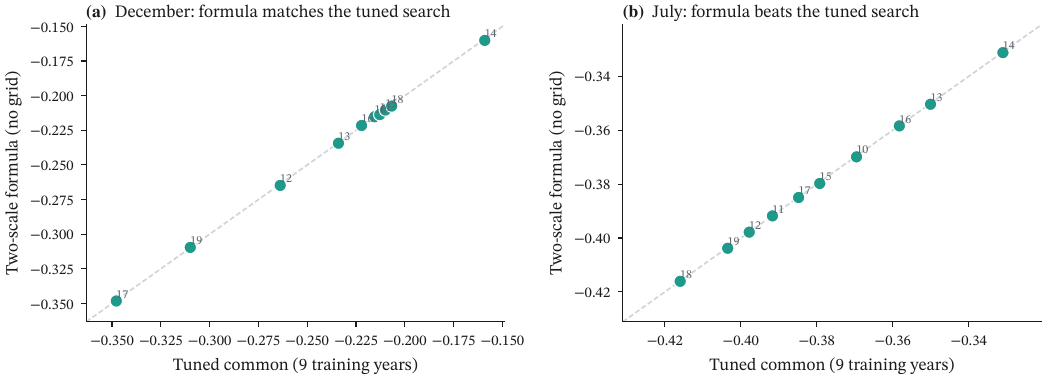}
\captionof{figure}{Two-scale windows against the common length tuned
on the other nine years, one point per held-out year; lower is better.
The two-scale windows use only the training-fold certificate. Both
months sit on the diagonal; in July every point lies slightly below it.}
\label{fig:station-no-tune}
\end{center}

The loss of the climatological windows on the recorded months is
therefore a wrong input, not a failure of the window rule. A
certificate that treats weather as drift, fixed on the training folds,
returns finite windows and matches a search over common lengths.

\subsection{Summary}

\begin{center}
\small
\setlength{\tabcolsep}{4pt}
\begin{tabular}{llcrrr}
\toprule
Law & loss & regime & $g$ & client-specific & minus tuned common\\
\midrule
December stations & endpoint & yes & $0.32$ & $0.0120$ & $-0.0128$ $[-0.0144,-0.0113]$\\
July stations & endpoint & yes & $0.066$ & $0.0128$ & $-0.0017$ $[-0.0025,-0.0008]$\\
July Beijing districts & endpoint & yes & $0.0002$ & $0.0235$ & $-0.0002$ $[-0.0013,0.0009]$\\
December, real order, climate & endpoint & no & $0.32$ & $-0.167$ & $0.071$ $[0.037,0.104]$\\
December, real order, stream-scale & endpoint & no & $0.09$ & $-0.231$ & $0.007$ $[0.006,0.008]$\\
December, real order, two-scale & endpoint & no & $0.078$ & $-0.239$ & $-0.0003$ $[-0.0008,0.00008]$\\
July, real order, climate & endpoint & no & $0.066$ & $-0.376$ & $0.0019$ $[0.0015,0.0023]$\\
July, real order, two-scale & endpoint & no & $0.022$ & $-0.378$ & $-0.00022$ $[-0.00033,-0.00011]$\\
\bottomrule
\end{tabular}
\captionof{table}{The two station experiments. The first three rows
are population $\avgerr_T$ over seeds $4$--$15$; the remaining rows are
prequential excess scores over ten held-out months (lower is better).
Intervals are paired Student $95\%$ intervals. ``Regime'' records
whether every hypothesis of Corollary~\ref{cor:refresh-joint} holds; in
real order independence fails. The predicted gain $g$ is computed
before count floors, which bind for the stream-scale windows.}
\label{tab:station-laws}
\end{center}

\clearpage
\section{Stationary and common-memory benchmarks}
\label{app:benchmarks}

The stationary and common-memory specializations of
Theorem~\ref{thm:refresh-upper} separate the roles of information
arrival and client-specific drift. We record them to make the
fixed-vector comparison of Section~\ref{sec:refresh-model} explicit.

\paragraph{Stationary information.}
When $\gamma_m=0$ for every client, all prefix memories are admissible
and $\Hg=0$. The drift terms disappear, and the leading finite-horizon
statistical term is proportional to $D\Xi/\sqrt T$, with
\[
 \Xi^2=\sum_m\frac{w_m^2\sigma_m^2}{\lambda_m}.
\]
The complete bound retains the optimization and startup terms of
\eqref{eq:refresh-average} and~\eqref{eq:refresh-transient}. This
information quantity weights a client's uncertainty by its contribution
to the target and by its arrival rate.
Proposition~\ref{prop:refresh-stationary} supplies the corresponding
finite-horizon statistical lower bound, including the gradient cap and
the finite-count corrections. Stationary clients therefore retain an
information cost even though they contribute nothing to a persistent
drift term.

\paragraph{Common memory and prescribed drift.}
Giving every client the same finite memory length $h$ produces the
window-dependent upper term
\[
 D\Bigl(2\bar\gamma h+\frac{\beta_0\Xi}{\sqrt h}\Bigr),
 \qquad \bar\gamma=\sum_mw_m\gamma_m,
 \qquad \beta_0=4\sqrt{2/3},
\]
and balancing its two terms yields the common-memory scale
$D\bar\gamma^{1/3}\Xi^{2/3}$, before sample-count floors and the other
remainders are imposed. Separate memories replace this scale by $D\Hg$,
as shown by Corollary~\ref{cor:refresh-opt}
and~\eqref{eq:refresh-comparison}. The comparison concerns the displayed
upper terms; it does not establish that a common-memory policy is optimal
among all causal estimators. The distinction also matters for lower
bounds: an aggregate constraint lets drift be allocated among clients,
whereas Theorem~\ref{thm:refresh-lower} respects every prescribed
clientwise bound, with the group-sensitive consequence in
Corollary~\ref{cor:refresh-dyadic}. The full proofs and the joint
comparison conditions are in Appendices~\ref{app:refresh-lower}
and~\ref{app:refresh-regime}.

\section{Detailed comparison of prior guarantees}
\label{app:prior-comparison}

\begin{table}[htbp]
\caption{Selected theorem assumptions and observation clocks. TV denotes
total variation. Each guarantee retains the conditions of its cited
theorem.}
\label{tab:referee-comparison}
\centering
\small
\setlength{\tabcolsep}{3pt}
\renewcommand{\arraystretch}{1.16}
\begin{tabular}{@{}p{0.22\linewidth}p{0.35\linewidth}p{0.39\linewidth}@{}}
\toprule
Reference and result & Assumptions and criterion & Clock and decisive distinction \\
\midrule
\citet{marfoq2023streams}, Thms.~4.1--4.4
& Fixed local laws; bounded convex smooth sample losses, bounded domain,
a stated pseudo-dimension condition; terminal population excess risk.
& Client arrivals and finite caches; the optimization theorem uses full
participation and full batches. Our populations drift under separate
clientwise bounds. \\
\addlinespace
\citet{ganguly2024nonstationary}, Master-FL
& Bounded convex Lipschitz losses and near-stationary base guarantees;
dynamic regret for changing empirical losses.
& Federated rounds with drift detection and restarts. Its target uses
current data-size weights; ours fixes population weights and models
noisy samples arriving over time. \\
\addlinespace
\citet{tyurin2025tight}, Thms.~H.11--H.12
& Fixed convex smooth objective; unbiased local gradients with bounded
variance; terminal expected objective gap.
& Heterogeneous gradient-computation times. Our clocks govern reusable
data from changing populations. \\
\addlinespace
\citet{cutler2023drift}, Thm.~8
& Strong convexity; fixed convex regularizer; second-moment drift/noise
and decorrelation conditions; current gap of an averaged iterate.
& One current stochastic gradient per iteration. We use a merely convex
target and separate client drifts and arrival rates. \\
\addlinespace
\citet{mazzetto2023density}, Thms.~4.2, 4.9
& Independent finite-support observations; regular endpoint drift for
terminal TV risk, bounded successive drift for long-run average TV risk.
& Observation index. Our criterion is convex suboptimality of a weighted
client aggregate, with unequal group clocks. \\
\addlinespace
This work, Thms.~\ref{thm:refresh-upper}, \ref{thm:refresh-lower}
& Compact convex domain; public clientwise gradient-drift bounds;
expected time-average current-objective gap.
& Exogenous arrivals and paid synchronization; reset solves and sample
splitting permit reuse. Local computation is uncharged. \\
\bottomrule
\end{tabular}
\end{table}

\end{document}